\newtheorem{theorem}{Theorem}[section]
\newtheorem{lemma}[theorem]{Lemma}
\newtheorem{corollary}[theorem]{Corollary}
\newtheorem{proposition}[theorem]{Proposition}
\theoremstyle{definition}
\newtheorem{definition}[theorem]{Definition}
\newtheorem{notation}[theorem]{Notation}
\newtheorem{setup}[theorem]{Setup}
\newtheorem{remark}[theorem]{Remark}
\newtheorem{construction}{Construction}
\newcolumntype{C}{>{$}c<{$}}
\renewcommand{\arraystretch}{1.2}
\newcommand\xx{11} 
\newcommand\yy{18} 
\newcommand{\FO}{W(\tau)}
\title[Quotients of flag varieties]{Quotients of flag varieties and their birational geometry} 
\author[Barban]{Lorenzo Barban}
\address{Center for Complex Geometry, Institute for Basic Science (IBS), 55 Expo-ro, Yuseong-gu, Daejeon, 34126, Republic of Korea}
\email{lorenzobarban@ibs.re.kr}
\author[Occhetta]{Gianluca Occhetta}
\address{Dipartimento di Matematica, Universit\`a degli Studi di Trento, via
Sommarive 14 I-38123, Trento (TN), Italy}
\email{gianluca.occhetta@unitn.it}
\author[Sol\'a Conde]{Luis E. Sol\'a Conde}
\address{Dipartimento di Matematica, Universit\`a degli Studi di Trento, via Sommarive 14 I-38123, Trento (TN), Italy}
\email{eduardo.solaconde@unitn.it}
\subjclass[2020]{Primary 14L30, 14E30; Secondary 14J30, 14L24, 14M17}
\thanks{The first author was supported by the Institute for Basic Science (IBS-R032-D1). Second and third author partially supported by INdAM--GNSAGA}
\newcommand\ignore[1]{}
\DeclareMathOperator{\HH}{H}
\DeclareMathOperator{\codim}{codim}
\def\ex{\operatorname{exp}}
\def\conv{\operatorname{Conv}}
\newcommand\PP{{\mathbb{P}}}
\newcommand\QQ{{\mathbb{Q}}}
\newcommand\ZZ{{\mathbb{Z}}}
\def\C{{\mathbb C}}
\def\F{{\mathbb F}}
\def\P{{\mathbb P}}
\def\Q{{\mathbb Q}}
\def\R{{\mathbb R}}
\def\Z{{\mathbb Z}}
\def\cA{{\mathcal A}}
\def\cC{{\mathcal C}}
\def\cE{{\mathcal E}}
\def\cI{{\mathcal I}}
\def\cO{{\mathcal{O}}}
\def\cU{{\mathcal U}}
\def\cY{{\mathcal Y}}
\def\Q{{\mathbb{Q}}}
\def\fn{{\mathfrak n}}
\def\fg{{\mathfrak g}}
\def\fh{{\mathfrak h}}
\def\fb{{\mathfrak b}}
\def\fsl{{\mathfrak sl}}
\def\operatorname#1{\mathop{\rm #1}\nolimits}
\def\Proj{\operatorname{Proj}}
\def\Aut{\operatorname{Aut}}
\def\Bir{\operatorname{Bir}}
\def\Chow{\operatorname{Chow}}
\def\Exc{\operatorname{Exc}}
\def\Hom{\operatorname{Hom}}
\def\Pic{\operatorname{Pic}}
\def\Hom{\operatorname{Hom}}
\def\codim{\operatorname{codim}}
\def\det{\operatorname{det}}
\def\conj{\operatorname{conj}}
\def\NE{\operatorname{NE}}
\def\Nef{{\operatorname{Nef}}}
\def\Nu{{\operatorname{N_1}}}
\def\NU{{\operatorname{N^1}}}
\def\Eff{{\operatorname{Eff}}}
\def\Mov{{\operatorname{Mov}}}
\newcommand{\cNE}[1]{\overline{\NE}(#1)}
\def\PGL{\operatorname{PGL}}
\def\BD{\operatorname{BD}}
\def\Sp{\operatorname{Sp}}
\DeclareMathOperator{\No}{N}
\def\OP{\operatorname{OP}}
\def\GZ{\mathcal{G}\! Z}
\def\GU{\mathcal{G} U}
\def\CZ{\mathcal{C}\mkern-1mu Z}
\def\LZ{\mathcal{L} Z}
\def\CF{\mathcal{C} \mkern-1mu F}
\newcommand{\tast}{\mathbin{\mkern-2.5mu * \mkern-2.5mu}}
\newcommand{\pb}{\ar@{}[dr]|{\text{\pigpenfont J}}}
\def\ol{\overline}
\newcommand{\xleftrightarrow}[2][]{\ext@arrow 3359\leftrightarrowfill@{#1}{#2}}
\newcommand{\xdasharrow}[2][->]{
\tikz[baseline=-\the\dimexpr\fontdimen22\textfont2\relax]{
\node[anchor=south,font=\scriptsize, inner ysep=1.5pt,outer xsep=2.2pt](x){#2};
\draw[shorten <=3.4pt,shorten >=3.4pt,dashed,#1](x.south west)--(x.south east);
}}
\newcommand\lra{\longrightarrow}
\def\Mo{\operatorname{\hspace{0cm}M}}
\begin{document}
\begin{abstract}
We compute the Chow quotient of the complete flag variety of subspaces of a four dimensional complex vector space, show that it is smooth and a Mori Dream Space, and describe in detail its birational geometry. 
\end{abstract}
\maketitle

\tableofcontents

\section{Introduction}\label{sec:intro}

It is a widespread idea in mathematics that one can use group actions to simplify problems, from the resolution of differential equations to geometry and mathematical physics. In algebraic geometry, many interesting objects -- such as moduli spaces -- can be constructed as quotients of varieties by algebraic group actions. In this context, the foundational work of Mumford in the 1960s was crucial in formalizing the notion of algebro-geometric quotients of varieties (cf. \cite{MFK}). 

One distinctive feature of GIT quotients is that they depend on a choice of linearization of the action on an ample line bundle over the variety. A natural question, then, is how the resulting quotients vary with the choice of the  linearization. This question was later explored within the framework of birational geometry, leading to several interesting concepts and results (cf. \cite{DolgachevHu}, \cite{HuKeel}).

On the other hand, Kapranov, Bia{\l}ynicki-Birula, Sommese, et al. (see \cite{Kapranov}, \cite{BBS3}) considered the problem of constructing an intrinsic notion of quotients by group actions, and proposed the closely related concepts of Chow and Hilbert quotients of a variety.

Moreover, the landmark paper \cite{KSZ} by Kapranov, Sturmfels, and Zelevinsky showed that Chow quotients can be computed combinatorially in the case of projective toric varieties with the action of a subtorus. This work also led to the definition of combinatorial quotient for toric varieties, which can be defined even in the non-projective case.

A remarkable point  is that Chow quotients by the action of complex tori of complex rational homogeneous varieties -- which are among the simplest and most symmetric algebraic varieties -- provide extremely interesting examples of varieties. 
For instance, Kapranov showed that the Grothendieck--Knudsen moduli space of stable genus zero curves with marked points can be realized as the Chow quotient of a Grassmannian by the action of a maximal torus in its automorphism group (cf. \cite[Theorem~4.1.8]{Kapranov}). This stimulated other researchers, such as Thaddeus, who characterized certain classical moduli spaces -- including the space of collineations -- as Chow quotients (see \cite[Main~Theorem]{Thaddeus}).

A natural problem  -- that we address in this work -- is to extend Kapranov's study to Chow quotients of other rational homogeneous varieties by the action of a maximal torus.

Rational homogeneous varieties are defined as quotients of semisimple algebraic groups by parabolic subgroups, and occur in families, classified by the underlying semisimple group $G$.
For a fixed $G$, all rational homogeneous $G$-varieties arise as smooth fiber type contractions of the associated complete flag manifold, which is the quotient of $G$ by a Borel subgroup $B$. 
This is the rational homogeneous variety that is ``closest'' to $G$, and in fact it has been shown that the homogeneous structure of $G/B$ can be reconstructed out of geometric properties of 
its elementary contractions (cf. \cite{OSWW}, \cite[Appendix A]{OSWi}). It makes then sense to start the research by looking at Chow quotients of complete flag varieties, from which the rest of quotients of rational homogeneous varieties can be obtained by means of contractions (see Section \ref{ssec:Winvcontr}). 

The geometry of these Chow quotients can be very involved, and substantially different from the varieties defining them. First of all, they can be singular, as one can check for the complete flag associated to the group $\Sp(4)$ (cf. \cite{Bianco}). Then, while the geometry of a complete flag $F=G/B$ is determined by the action of the group $G$, the Chow quotient $X$ of $F$ is naturally endowed with the action of the Weyl group $W$ of $G$. Moreover, while all the contractions of $F$ are  smooth fibrations, its quotient $X$ enjoys a frantic birational life.

One can easily show that the Chow quotient of the complete flag manifold associated to  $\PGL(3)$ is $\P^1$, hence the first interesting case is the complete flag variety of vector subspaces of $\C^4$. Our main result is the following:

\begin{theorem}\label{thm:main}
Let $F$ be the complete flag variety of vector subspaces of $\C^4$, endowed with the natural action of the complex torus $H\subset\PGL(4)$ of homothety classes of diagonal matrices. Then the Chow quotient $X$ of $F$ by the action of $H$ is a smooth rational weak Fano threefold of Picard number $12$. 
\end{theorem}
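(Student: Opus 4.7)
The plan is to realize $X$ explicitly as an iterated blow-up of a simple smooth rational variety, reading off smoothness, rationality and the Picard number directly from the construction, and verifying the weak Fano property by a final intersection-theoretic check. The dimension is immediate: since $F=\SL(4)/B$ has dimension $6$, $H$ has dimension $3$, and a generic point of $F$ has trivial $H$-stabilizer, the generic $H$-orbit is three-dimensional and hence $\dim X=3$. The Chow quotient $X$ is a normal projective variety which inherits a natural action of the Weyl group $W=S_4=N_{\PGL(4)}(H)/H$, and this symmetry will be an essential organising principle in the construction below.

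To identify $X$ I would first determine the closure of a generic $H$-orbit in $F$: it is a toric threefold whose fan is read off from the moment polytope of $F$ under the diagonal torus, namely the type-$\DA_3$ permutohedron in $\R^3$. Using a Gelfand--MacPherson-style equivariant embedding of $F$ into a product of Grassmannians (and thus into an ambient toric variety), the result of \cite{KSZ} computes the Chow quotient of this toric ambient combinatorially; then $X$ is identified as a strict transform (or small modification) of the image of $F$ inside it. In practice the cleanest presentation is likely to realize $X$ as the top of a tower of blow-ups $\widetilde X = Y_k \to \cdots \to Y_0$, starting from a simple $S_4$-symmetric Fano threefold $Y_0$ (for instance $(\P^1)^3$ or a blow-up of $\P^3$), with blow-up centres prescribed by the walls of the permutohedral fan, and then to check that $\widetilde X \simeq X$.

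From such a presentation, smoothness of $X$ is immediate because each centre is a smooth subvariety in a smooth ambient variety, rationality is automatic because $Y_0$ is rational and the blow-up maps are birational, and the Picard number is $\rho(X)=\rho(Y_0)+k$, so the claim $\rho(X)=12$ pins down the combinatorics of the tower. For the weak Fano property I would write $-K_X$ as the pullback of $-K_{Y_0}$ corrected by the exceptional divisors, enumerate the extremal rays of $\overline{\NE}(X)$ (either by producing extremal curves explicitly in the tower or via the Mori chamber decomposition), and check that $-K_X\cdot R\ge 0$ on each, together with bigness via a numerical self-intersection computation.

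The main obstacle is showing that $X$ is smooth. Chow quotients of flag varieties are typically singular — the case of $\Sp(4)$ mentioned just before the statement is a cautionary example — so verifying that the candidate smooth model $\widetilde X$ really coincides with $X$ requires a careful analysis of how generic $H$-orbit closures degenerate into reducible cycles near the boundary, and a proof that each limiting Chow cycle is reduced and has the expected combinatorial type. Once this identification is secured, all remaining assertions reduce to finite intersection-theoretic computations on an explicit smooth threefold.
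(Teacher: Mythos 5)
Your overall strategy --- realize $X$ as an explicit iterated blowup of a simple rational threefold with $S_4$-symmetric centres, then read off smoothness, rationality and $\rho$, and finish with an intersection-theoretic check of $-K_X$ --- is exactly the shape of the paper's argument, and your preliminary observations (the dimension count, and the identification of the generic orbit closure as the permutohedral toric threefold) are correct and appear as Corollary \ref{cor:momentgen}. However, what you have written is a plan rather than a proof: the entire content of the theorem sits in the step you yourself call ``the main obstacle'', namely proving that the candidate smooth tower $\widetilde X$ actually \emph{is} the Chow quotient, and your proposal neither carries this out nor gives a mechanism that would work as stated. Concretely, the route ``embed $F$ equivariantly into a product of Grassmannians, compute the Chow quotient of the ambient toric variety by \cite{KSZ}, and recover $X$ as a strict transform of the image of $F$'' has a gap: the closure of a generic $H$-orbit of a point of $F$ and that of a generic point of the ambient toric variety have different homology classes (one is contained in $F$, the other is not), so $\CF$ and the Chow quotient of the ambient live in different components of the Chow variety, and there is no ambient birational model in which ``strict transform'' is defined without substantial additional work on how the orbit closures of points of $F$ degenerate. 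That degeneration analysis is precisely the hard part, and deferring it to ``a careful analysis'' leaves the proof unwritten.

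The paper circumvents this by a genuinely different device: $F$ is covered by the $24$ $H$-invariant affine charts $F_\sigma\simeq\fn\simeq\C^6$ (translates of the open Bruhat cell), which are themselves toric, so \cite{KSZ} applies \emph{chart by chart} to produce smooth projective combinatorial quotients $X_\sigma$ (Proposition \ref{prop:combquotisproj}); each $X_\sigma$ is exhibited as an inverse limit of GIT quotients of $F$, hence receives a birational contraction $\pi_\sigma:X\to X_\sigma$ via the limit-quotient description of $\CF$ (Proposition \ref{prop:BHR}, Corollary \ref{cor:birmor}); and $X$ is recovered as the normalization of the image of $(\pi_\sigma)_\sigma$ in $\prod_\sigma X_\sigma$ (Proposition \ref{prop:invlim}). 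The explicit smooth model is then not guessed from the permutohedral fan but forced by resolving the indeterminacies of the transition maps between charts (the tile group of Proposition \ref{prop:birmaps}), yielding the $11$-step blowup of $\P^3$ of Definition \ref{def:X'} --- note that your candidate base $(\P^1)^3$ is not the one that arises. Finally, the weak Fano verification is not a routine enumeration of extremal rays done in the abstract: it requires the explicit linear equivalences among the $20$ boundary divisors (Table \ref{tab:rels}), the expression (\ref{eq:can1}) of $-K_X$ as an effective combination of them, the reduction of nefness to restrictions to boundary divisors (Lemma \ref{lem:nef}), and the computation $(-K_X)^3=12$ for bigness. None of these ingredients is present, even in outline, in your proposal.
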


A large part of the paper is then devoted to the description of the birational properties of $X$. 
We will show that $X$ admits the action of an index two extension of the Weyl group, namely, the  octahedral symmetry group $S_4\rtimes\Z/2\Z$. This action is then inherited by the space of numerical class of divisors $\NU(X)$, and by the nef and effective cones of $X$, which we will describe. 

A key point in our approach is the fact that $F$ admits an $H$-invariant open covering by affine spaces, which are $H$-equivariantly isomorphic to the Lie algebra $\fn$ of nilpotent lower triangular $4\times 4$ matrices. Roughly speaking, the combinatorial quotient of $\fn$ by the action of $H$, that can be computed in a reasonably easy way, helps us understanding the possible degenerations of the general $H$-invariant cycle parametrized by $X$. We show that $X$ admits $24$ (one for every fixed point of the action of $H$ on $F$) birational contractions onto the combinatorial quotient of $\fn$, which is a smooth toric projective variety. By analyzing these contractions, we are able to reconstruct $X$.

We remark that in \cite[Theorem~1.3]{CoreyOlarte}  it was already shown, using other methods, that the Chow quotient 
$X$ is smooth (along with being a log crepant resolution of the log canonical compactification of an open subset of a subvariety of $F$ parametrizing flags with a fixed base point). 
Our approach is different, and we give an explicit description of $X$.\par\medskip

\noindent{\bf Outline.}
In Section \ref{sec:prelim} we recall background results on torus actions and flag varieties, and introduce the terminology that will be used throughout the paper. In Section \ref{sec:Chow} we introduce the Chow quotient $X$ of $F$ and the closure of the $H$-orbit of a  general point of $F$, which is a projective toric variety whose moment polytope is a permutohedron. We also introduce the notion of fundamental subtori of $H$. These are closely related to the divisors of 
$X$ that parametrize codimension-one degenerations of the closure of the general orbit; we call them boundary divisors.

The combinatorial quotients $X_\sigma$ of $H$-invariant affine charts in $F$ are computed in Section \ref{sec:local}. These are projective toric varieties, obtained from $\P^3$ via a sequence of toric blowups. We show that some of their torus-invariant divisors are related to the fundamental subtori introduced earlier. 
We then show that the combinatorial quotients  $X_\sigma$ arise as inverse limits of certain GIT quotients of 
$F$, from which it follows that  each $X_\sigma$ is the target of a contraction of $X$.

The next step, addressed in Section \ref{sec:merging}, is the reconstruction of the Chow quotient
 from the combinatorial quotients $X_\sigma$. We show that the natural birational maps among the $X_\sigma$'s are extensions of the elements of a finite subgroup $\FO$ of the Cremona group of $\P^3$, that we call {\em tile group}, and 
describe in full detail. With these ingredients at hand, we show in Section \ref{sec:constructX} that $X$ can be identified with a variety, which we call the {\em tile threefold}, obtained from $\P^3$ via a sequence of blowups that resolves the indeterminacies of the elements of $\FO$. A detailed description of this construction shows, in particular, that 
$X$ is a smooth Fano threefold of Picard number $12$ (Proposition \ref{prop:XisoX'}).

The last two sections of the paper focus on the birational geometry of the variety $X$. After studying its intersection theory in terms of boundary divisors -- which generate $\Pic(X)$ -- we prove that $X$ is a smooth weak Fano manifold, whose anticanonical model has degree $12$ in $\P^{13}$ (cf. Section \ref{ssec:antican}); in particular this concludes the proof of Theorem \ref{thm:main}. 

As a consequence $X$ is a Mori Dream Space, therefore its Mori, Nef and Effective cones are polyhedral. We compute these cones in Section \ref{sec:birgeomX}, and study some of the contractions of $X$. In particular, we identify the Chow quotients of partial flag varieties of $\C^4$ as targets of contractions of $X$, which we describe explicitly.

Some of the results in this paper rely on explicit computations, carried out with the aid of the software {\tt SageMath}. For the reader's convenience, our scripts are available at \cite{AzulSage}, organized in two files: {\tt Chow\_aux\_1.ipynb}, corresponding to Sections \ref{sec:local} and \ref{sec:merging}, and {\tt Chow\_aux\_2.ipynb}, corresponding to Section \ref{sec:birgeomX}.\par \medskip 

\noindent{\bf Acknowledgements.} We would like to thank J. Hong for presenting this problem to us, as well as J. Wi\'sniewski, M. Donten-Bury and E. Lee for interesting conversations and helpful references. 

\section{Preliminaries}\label{sec:prelim}

\subsection{Notation}\label{ssec:notn}

We work over the field of complex numbers.  Given a free abelian group $\Mo$, we denote by $\Mo_{\R}$ the associated real vector space. 

Let $Z$ be a normal projective variety. We denote by $\rho_Z$ the \emph{Picard number} of $Z$, by $\NU(Z)$ the real vector space of Cartier divisors modulo numerical equivalence, by $\Nu(Z)=\NU(Z)^\vee$ the space of numerical classes of $1$-cycles, and by $\Nef(Z)\subset \NU(Z)$, $\cNE{Z}\subset \Nu(Z)$ the {\em nef} and {\em Mori cone of $Z$}, respectively.

A \emph{contraction} is a surjective morphism $\phi: Z\to Y$ with connected fibers onto a normal projective variety. 
The contraction $\phi$ is {\em elementary} if $\rho_Z-\rho_Y=1$. If $\dim Z > \dim Y$, the contraction is \emph{of fiber type}, otherwise it is birational. If $\phi$ is birational and $\codim \Exc(\phi) \geq 2$, $\phi$ is called \emph{small}, otherwise it is called \emph{divisorial}.
Given a Mori extremal ray $R$ of $Z$, we denote by $\varphi_R$ the associated contraction, by $\Exc(R)$ its exceptional locus, by  $\ell(R)$ its {\em length}, i.e., the minimum anticanonical degree of curves contracted by $\varphi_R$. 

A birational map $\phi:Z\dashrightarrow Y$ is a \emph{small modification} if it is an isomorphism in codimension $1$. If $Z$ and $Y$ are $\Q$-factorial, such a map is called a \emph{small $\Q$-factorial modification} (SQM, for short). Let $\widehat{Z}$ be a small $\Q$-factorial modification of $Z$. We will denote by $\widehat{E}$ the strict transform in $\widehat{Z}$ of a divisor $E \subset Z$.

A normal $\Q$-factorial projective variety $Z$ is called a {\it Mori Dream Space} (MDS for short) if 
\begin{enumerate}[leftmargin=\yy pt]
	\item\label{item:1} the Picard group $\Pic(Z)$ of $Z$ is finitely generated;
	\item\label{item:2} the nef cone $\Nef(Z)$ is generated by classes of finitely many semiample divisors;
	\item\label{item:3} there exists a finite number of SQMs $f_i:Z\dashrightarrow Z_i$, for $i=0,\ldots,k$, such that every $Z_i$ satisfies $(\ref{item:2})$ and $$\Mov(Z)=\bigcup_{i=0}^k f_i^*\Nef(Z_i).$$
\end{enumerate}

\subsection{Torus actions and associated quotients}\label{ssec:prelimtorus}

In this section we briefly introduce the necessary background on algebraic torus actions on polarized varieties and their quotients. We refer to \cite{BWW}, \cite{WORS2} for details. 

For simplicity, we will consider only the case of the nontrivial action of an algebraic torus of rank $r$ on a smooth projective variety $Z$. We denote by  $\Mo(H)$ the character lattice of $H$. 
The fixed point locus decomposes as $Z^H=\bigsqcup_{Y\in \cY} Y$, where each $Y$ is smooth (cf. \cite[Theorem~1.1]{IVERSEN}) hence irreducible.

Given an ample line bundle $L$ on $Z$, we call $(Z,L)$ a \emph{polarized pair}. A \emph{linearization} of $L$ by $H$ is an $H$-action $H\times L\to L$ which is linear along the fiber and such that the projection $L\to Z$ is $H$-equivariant. In this setting linearizations always exist \cite[Remark~2.4]{KKLV}, and two linearizations of $L$ differ by a character of $H$. For $p\in Y\subset Z^H$, we denote by $\mu_L(p)$ the \emph{weight} of the $H$-action on $L_p$.  If $p,q$ belong to the same irreducible component $Y\subset Z^H$, we have that $\mu_L(p)=\mu_L(q)$, hence we define $\mu_L(Y):=\mu_L(p)$, $p\in Y$. 
We denote by $\Delta$ the {\em polytope of fixed points of the action}, that is the convex hull in $\Mo(H)_{\R}$ of $\mu_L(Y)$, for $Y\in \cY$.

The $H$-action on the pair $(Z,L)$ induces an $H$-decomposition 
\[\HH^0(Z,L)=\bigoplus_{\tau\in \widetilde{\Gamma}}\HH^0(Z,L)_{\tau},
\] 
where $\HH^0(Z,L)_{\tau}$ denotes the eigenspace on which $H$ acts with weight $\tau$ and $\widetilde{\Gamma}$ is the set of weights of $H$-action on $\HH^0(Z,L)$. The \emph{polytope of sections} $\Gamma$ is the convex hull in $\Mo(H)_{\R}$ of $\widetilde{\Gamma}$. If $L$ is base point free, then $\Gamma = \Delta$ (cf. \cite[Lemma~2.4~(3)]{BWW}). For any $u\in \Gamma$, we set
$$\cA_u := \bigoplus_{m\geq 0, mu\in \Z^r}\HH^0(Z,mL)_{m u}.$$
The $\C$-algebra $\cA_{u}$ is finitely generated, and the normal projective variety 
$$\GZ_u:= \Proj \cA_{u}.$$
is a {\em GIT-quotient of $(Z,L)$} under the $H$-action. It parametrizes the orbits of a semistable open subset $Z^{ss}_u\subset Z$, defined in terms of the linearization of $L$ given by the weight $u\in \Gamma$. 

\subsubsection{Chow quotients.} 
We recall here the definition of Chow quotient given in \cite[Section 0.1]{Kapranov}. Let an algebraic torus $H$  act on a smooth projective variety $Z$. We may find an open subset $U$ of $Z$ of points $z$ such that the cycles $\overline{H\cdot z}$ have the same dimension and belong to the same homology class; after shrinking we may assume that $U$ is $H$-invariant and that there exists a geometric quotient $\mathcal{G}U$ of $U$ by $H$. We then have an embedding of $\mathcal{G}U$ into the {\em Chow variety of $Z$}, $\psi: \mathcal{G}U\hookrightarrow \Chow(Z)$. 

\begin{definition}\cite{Kapranov}\label{def:ChowQuotient}
	The {\em Chow quotient of $Z$  by the action of} $H$ is defined as the closure of $\psi(\GU)$, that is	
	\[\overline{\CZ}:=\overline{\psi(\mathcal{G}U)}\subset \Chow(Z).\]
	We denote by $\CZ$ the normalization of $\overline{\CZ}$, and call it the \emph{normalized Chow quotient} of $Z$ by $H$. 
	The pullback of the universal family of $\Chow(Z)$ to $\CZ$ will be denoted by $p: \cU\to \CZ$, and the evaluation morphism by $q:\cU\to Z$. An important property of this quotient is that it dominates all the GIT quotients of $Z$:
\end{definition}

\begin{theorem}\cite[Theorem~(0.4.3)]{Kapranov}\label{thm:invlim}
	For each $v\in \Gamma$ there exists a birational morphism $\pi_v:\CZ\to \GZ_v$. 
\end{theorem}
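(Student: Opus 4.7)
The plan is to construct $\pi_v$ by exploiting the universal family $p: \cU \to \CZ$, $q: \cU \to Z$ of the Chow quotient, together with the standard GIT description of $\GZ_v$ as the set of closed $H$-orbits in the semistable locus $Z^{ss}_v$ (equivalently, the quotient of $Z^{ss}_v$ that identifies points whose orbit closures meet). A point of $\CZ$ is the normalization class of an $H$-invariant algebraic cycle $C \subset Z$, obtained as a limit of closures $\overline{H\cdot z}$ of generic $H$-orbits; I would define $\pi_v(C) \in \GZ_v$ as the S-equivalence class of a closed $H$-orbit contained in $C \cap Z^{ss}_v$. On the dense open $\GU \subset \CZ$ this is immediate: a generic $H$-orbit lies in $Z^{ss}_v$, and its closure contains a unique closed orbit in every affine GIT chart by Hilbert--Mumford theory.

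I would next show that this assignment is well-defined and regular on all of $\CZ$. Since Chow-limits of $H$-invariant cycles remain $H$-invariant (the $H$-action extends to $\Chow(Z)$ and $\CZ$ is fixed), every component of a boundary cycle $C$ is $H$-stable, and hence $C \cap Z^{ss}_v$ is an $H$-invariant closed subscheme of the semistable locus. The classical fact that any two closed $H$-orbits inside a connected invariant closed subscheme of a GIT chart are S-equivalent then produces a well-defined point in $\GZ_v$. To upgrade this to a morphism, I would pull back the affine coordinates on $\GZ_v$ -- namely, $H$-invariant sections in $\HH^0(Z,mL)_{mv}$ -- through $q$, push them forward along the proper map $p$, and verify regularity of the resulting functions on $\CZ$; properness of the universal family together with normality of $\CZ$ are the key ingredients.

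Birationality is then automatic: on $\GU$ the map $\pi_v$ agrees with the open immersion $\GU \hookrightarrow \GZ_v$ induced by factoring the geometric quotient $U \to \GU$ through $Z^{ss}_v \to \GZ_v$, so $\pi_v$ is an isomorphism over a dense open of both sides. The main obstacle I anticipate is the regularity statement at degenerate cycles $C$ whose components may lie partly in the unstable locus $Z \setminus Z^{ss}_v$: one must confirm that some component of $C$ still meets $Z^{ss}_v$ and that the S-equivalence class of the corresponding closed orbit depends continuously on $C$. This should follow by combining properness of $\CZ$ with the observation that closed semistable orbits specialize to closed semistable orbits along one-parameter families, ensuring the set-theoretic construction is compatible with limits and hence morphic.
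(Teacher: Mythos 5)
The paper does not prove this statement: it is quoted verbatim from Kapranov \cite[Theorem~(0.4.3)]{Kapranov}, so there is no in-paper argument to compare yours against. Your outline (define $\pi_v$ on a cycle $C$ by sending it to the common image in $\GZ_v$ of the semistable points of $|C|$, check well-definedness, descend invariant section ratios along the proper universal family, and observe birationality over $\GU$) is indeed the shape of the standard proof. But the step you rely on to make the map well defined is not correct as stated.

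The ``classical fact'' that any two closed $H$-orbits inside a connected invariant closed subscheme of $Z^{\ss}_v$ are S-equivalent is false. Take $Z=\P^1\times\P^1$ with $\C^*$ acting diagonally by $t\cdot([x_0:x_1],[y_0:y_1])=([x_0:tx_1],[y_0:ty_1])$, linearized on $\cO(1,1)$ at the middle weight: the fixed points $([1:0],[0:1])$ and $([0:1],[1:0])$ are both closed semistable orbits, they map to distinct points of the GIT quotient $\P^1$, and yet both lie on the connected invariant closed curve $\{x_1y_1=0\}$. What saves the argument is not connectedness but the fact that $C$ is a \emph{limit of generic orbit closures}; equivalently, that the weight polytopes of the components of $C$ form a polyhedral subdivision of $\Delta$ (the property recalled in Section~\ref{sec:local} via \cite[Proposition 1.2.11]{Kapranov}). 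That is precisely what rules out ``wrong chains'' like $\{x_1y_1=0\}$ above (which is not a Chow-quotient cycle) and simultaneously settles the other point you defer: since $v\in\Gamma=\Delta$ lies in the polytope of some component, $C\cap Z^{\ss}_v\neq\emptyset$. This non-emptiness cannot be obtained from properness alone, because $Z^{\ss}_v$ is open, so ``$|C|$ is contained in the unstable locus'' is a \emph{closed} condition on cycles and could a priori hold in the limit. In short, the pivotal lemma of your proof is missing and must be replaced by the polytope-subdivision (or an equivalent degeneration) argument, which is the actual content of Kapranov's theorem; the remaining steps (descending $q^*(s_0/s_1)$ along $p$ using normality of $\CZ$, and birationality over $\GU$) are fine once that is in place.
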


\subsubsection{Limit quotients.}  Consider again the above setting. 
Following \cite[\S 2]{BakerChow}, we can describe a stronger relation among $\CZ$ and the GIT quotients of $Z$. 
Let $u,v\in \Gamma$ be such that $Z^{ss}_v\subset Z^{ss}_u$.  Taking quotients we obtain a dominant projective morphism $f_{v,u}: \GZ_v \to \GZ_u$.  One may show that the collection of GIT quotients $\GZ_v$, for $v\in \Gamma$, together with the morphisms $f_{v,u}$, for $u\in \Gamma$ such that $Z^{ss}_v \subset Z^{ss}_u $, form an inverse system. The inverse limit of this system might be reducible 
(cf. \cite[p.~654]{KSZ}, \cite[Counterexample~1.11]{Thaddeus1996}), but there exists a unique irreducible component  $\overline{\LZ}$ containing the geometric quotient $\GU$. We call $\overline{\LZ}$ the {\em limit quotient of $Z$}, 
and its normalization $\LZ$ the {\em normalized limit quotient of $Z$} by the action of $H$. 

By definition, the normalized limit quotient comes with birational morphisms $\chi_v: \LZ\to \GZ_v$, for $v\in \Gamma$. Moreover, it enjoys the following universal property: 

\begin{remark}\label{rem:UniversalPropertyLimit}
	Given an irreducible variety $M$ and a collection of birational morphisms $\phi_v:M\to \GZ_v$ such that $\phi_u = f_{v,u}\circ \phi_v$, for $u,v\in \Gamma$ such that $X^{ss}_v\subset X^{ss}_u$, there exists a unique $\Phi : M\to \LZ$ such that $\phi_v = \chi_v \circ \Phi$ for every $v\in \Gamma$.
\end{remark}

In our setting --more generally for torus actions on normal projective varieties, cf \cite[Corollary~2.6]{BakerChow}--, 
the normalized limit quotient is equal to the normalized Chow quotient:

\begin{proposition}
\label{prop:BHR}
	Let $H$ be an algebraic torus acting nontrivially on a smooth projective variety $Z$. Then
	$\CZ\simeq \LZ$.
\end{proposition}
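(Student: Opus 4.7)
The plan is to produce mutually inverse birational morphisms between $\CZ$ and $\LZ$ by exploiting the universal property of the normalized limit quotient and the moduli nature of the Chow variety, and then to verify they are inverses on the dense common open $\GU$.

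First, I would construct a morphism $\Phi:\CZ\to\LZ$ directly from Theorem \ref{thm:invlim} and Remark \ref{rem:UniversalPropertyLimit}. The theorem provides, for every $v\in\Gamma$, a birational morphism $\pi_v:\CZ\to\GZ_v$. Over the dense open subset $\GU\subset\CZ$ each $\pi_v$ restricts to the geometric quotient map $U\to\GU$, so the compatibility $\pi_u=f_{v,u}\circ\pi_v$ (for $u,v\in\Gamma$ with $Z^{ss}_v\subset Z^{ss}_u$) holds on a dense open; since $\GZ_u$ is separated, it extends to all of $\CZ$. Thus the $\pi_v$'s assemble into a morphism from $\CZ$ to the inverse system, whose image lies in the unique component $\overline{\LZ}$ meeting $\GU$. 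Because $\CZ$ is normal, Remark \ref{rem:UniversalPropertyLimit} yields a unique $\Phi:\CZ\to\LZ$ with $\pi_v=\chi_v\circ\Phi$ for all $v$, and $\Phi$ is birational since it is the identity on $\GU$.

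Second, I would construct an inverse $\Psi:\LZ\to\CZ$ using that $\CZ$ parametrizes $H$-invariant cycles. The idea is that a point $y\in\LZ$ encodes, compatibly in $v$, an orbit-closure cycle in $Z^{ss}_v/\!\!/H=\GZ_v$; combining these over all $v\in\Gamma$ produces a well-defined $H$-invariant algebraic cycle $Z_y\subset Z$ of the correct dimension and homology class. To promote this pointwise assignment to a morphism, I would construct a flat family of cycles on $Z$ parametrized by $\LZ$ -- concretely, taking pullbacks of the universal GIT families along $\chi_v$ and gluing -- and invoke the universal property of $\Chow(Z)$ to obtain $\LZ\to\Chow(Z)$. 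Over $\GU$ this morphism coincides with the embedding $\psi$, so its image lands in $\overline{\CZ}$, and by normality of $\LZ$ we get the desired $\Psi:\LZ\to\CZ$.

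Finally, both $\Phi\circ\Psi$ and $\Psi\circ\Phi$ restrict to the identity on the dense open $\GU$; since both $\CZ$ and $\LZ$ are normal and irreducible, these compositions are the identities, so $\Phi$ is an isomorphism. The main obstacle is the second step: checking that the family of orbit-closures over $\LZ$ is actually flat (equivalently, that the cycles $Z_y$ stay in one fixed Chow class as $y$ varies in $\LZ$). This is where one must use that $Z$ is smooth and projective and $H$ is a torus, so that the relevant families of $H$-invariant cycles behave well in flat limits; alternatively, one can appeal to the general statement \cite[Corollary~2.6]{BakerChow} for torus actions on normal projective varieties, which is exactly this flatness conclusion packaged as an isomorphism of quotients.
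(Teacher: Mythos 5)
Your argument is essentially the paper's: the paper gives no independent proof of this proposition and simply invokes \cite[Corollary~2.6]{BakerChow}, and your sketch, after correctly assembling the morphism $\Phi:\CZ\to\LZ$ from Theorem \ref{thm:invlim} and the universal property of Remark \ref{rem:UniversalPropertyLimit}, ultimately defers the only nontrivial step (that the orbit-closure cycles over $\LZ$ fit into a single Chow class, yielding the inverse $\Psi$) to that same reference. So the proposal is correct and coincides with the paper's treatment; your additional scaffolding is sound but does not constitute an independent proof of the hard direction.
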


\subsubsection{Combinatorial quotients.}
We complete this section by recalling the construction of  
the {\em combinatorial quotients} of toric varieties, introduced in \cite{KSZ}. 
Let $Z$ be a normal toric variety, not necessarily projective, with fan $\Sigma$, let $T$ be the maximal torus acting on $Z$, and let $H\subset T$ be a subtorus. Call $\pi: \No(T)_{\R}\to \No(T/H)_{\R}$ the induced projection between the spaces associated to the lattices of $1$-parametric subgroups. The {\em quotient fan  of $\Sigma$ by $\pi$} is the fan $\pi(\Sigma)$ in $\No(T/H)_{\R}$ defined by the property that the minimal cone of the fan containing $x\in \No(T/H)_{\R}$ is 
\[
\bigcap_{\sigma\in \Sigma, x\in \pi(\sigma)} \pi(\sigma).\]
The normal toric $T/H$-variety whose fan is $\pi(\Sigma)$ is called the {\em combinatorial quotient of $Z$ by the action of $H$}. 

\begin{proposition}\cite[Theorem~2.1~(a)]{KSZ}\label{prop:ChowToric}
Let $Z$ be a projective toric $T$-variety, and $H\subset T$ be a subtorus. Then the Chow quotient of $Z$ by the action of $H$ is isomorphic to the combinatorial quotient of $Z$ by the action of $H$. 
\end{proposition}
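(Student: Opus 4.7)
The plan is to identify the combinatorial quotient with the normalized limit quotient $\LZ$ and then invoke Proposition~\ref{prop:BHR} to conclude. By the universal property stated in Remark~\ref{rem:UniversalPropertyLimit}, this reduces to constructing, for every $v \in \Gamma$, a birational morphism from the combinatorial quotient to $\GZ_v$ that is compatible with the transition maps $f_{v,u}$ whenever $Z^{ss}_v \subseteq Z^{ss}_u$.

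The first step is to describe each GIT quotient $\GZ_v$ combinatorially. Since $Z$ is a projective $T$-toric variety with fan $\Sigma$, the semistable locus $Z^{ss}_v$ for the linearization associated to $v$ is a $T$-invariant open subvariety of $Z$, corresponding to the subfan $\Sigma_v \subseteq \Sigma$ whose cones $\sigma$ are characterized by the position of $v$ relative to $\pi(\sigma)$. The geometric invariant theoretic quotient $\GZ_v$ is then the $T/H$-toric variety whose fan has maximal cones the images $\pi(\sigma)$ for maximal $\sigma \in \Sigma_v$. This is the standard Hilbert--Mumford picture of toric GIT, and the inclusion $Z^{ss}_v \subseteq Z^{ss}_u$ translates into a coarsening of the corresponding fans of quotients.

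The second step is to observe that, by the very definition of $\pi(\Sigma)$, the combinatorial quotient refines every fan of $\GZ_v$: given any cone $\pi(\sigma)$ arising in $\GZ_v$, each point $x \in \pi(\sigma)$ lies in a unique minimal cone $\bigcap_{x \in \pi(\sigma')} \pi(\sigma') \subseteq \pi(\sigma)$, which is by definition a cone of $\pi(\Sigma)$. This provides a canonical toric morphism $\phi_v$ from the combinatorial quotient to $\GZ_v$, and the compatibilities $\phi_u = f_{v,u} \circ \phi_v$ follow from the corresponding compatibility of fan refinements. Applying Remark~\ref{rem:UniversalPropertyLimit} produces a canonical morphism $\Phi$ from the combinatorial quotient to $\LZ \simeq \CZ$, which is birational because both varieties contain $\GU$ as a dense open subset.

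The main obstacle is then to upgrade $\Phi$ to an isomorphism. The cleanest route I would pursue is to construct the inverse directly by exhibiting a tautological family. The combinatorial quotient naturally carries an $H$-invariant flat family of cycles in $Z$, whose fibers over torus-invariant points are the unions of $H$-orbit closures prescribed combinatorially by the cones of $\pi(\Sigma)$ -- equivalently by the coherent mixed subdivisions of the moment polytope $\Delta$ induced by the projection $\pi$. This family yields a morphism to $\Chow(Z)$ landing in $\overline{\CZ}$, and normality of the combinatorial quotient promotes it to a morphism to $\CZ$ inverse to $\Phi$. Verifying that every limit cycle parametrized by $\CZ$ arises in this combinatorial way is the technical heart of the proof, and the place where the bulk of the work would go; the remaining checks -- compatibility with transition maps and birationality -- are formal consequences of the toric structure.
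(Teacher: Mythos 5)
The paper does not prove this statement: it is quoted verbatim from \cite[Theorem~2.1~(a)]{KSZ}, so there is no internal proof to compare against. Your proposal must therefore be judged on its own. Its first half is sound and matches the standard toric-GIT picture: the semistable loci $Z^{\ss}_v$ are $T$-invariant, the quotients $\GZ_v$ are the toric varieties on the projected subfans, the quotient fan $\pi(\Sigma)$ refines each of them by the minimality in its definition, and Remark~\ref{rem:UniversalPropertyLimit} together with Proposition~\ref{prop:BHR} then yields a birational morphism $\Phi$ from the combinatorial quotient to $\LZ\simeq\CZ$. (One point you pass over: it must be checked that the minimal cones $\bigcap_{x\in\pi(\sigma)}\pi(\sigma)$ actually assemble into a fan, and that this fan is complete and quasi-projective; this is part of what \cite{KSZ} establishes via the fiber polytope, and it is needed before you can speak of ``the'' combinatorial quotient as a projective variety mapping anywhere.)

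The genuine gap is in the last paragraph. You propose to upgrade $\Phi$ to an isomorphism by building a tautological family of cycles over the combinatorial quotient and obtaining from it a morphism to $\Chow(Z)$, which you call ``inverse to $\Phi$.'' But that construction produces a morphism \emph{from} the combinatorial quotient \emph{to} $\overline{\CZ}$ (and hence to $\CZ$ after normalization) --- the same direction as $\Phi$, so it cannot be an inverse. What is actually required is either a morphism $\CZ\to$ combinatorial quotient (i.e.\ showing that every cycle parametrized by $\CZ$ is the union of $H$-orbit closures prescribed by a unique cone of $\pi(\Sigma)$, and that this assignment is algebraic), or a proof that $\Phi$ is finite --- for instance by showing that no curve in the combinatorial quotient is contracted by all the $\phi_v$ simultaneously, which is exactly the style of argument this paper uses in Propositions~\ref{prop:invlim} and~\ref{prop:XisoX'}. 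You explicitly defer ``verifying that every limit cycle arises in this combinatorial way'' as the technical heart; that deferral, combined with the directional error in the inverse, means the proposal establishes only a birational morphism between the two quotients, not the claimed isomorphism.
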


Note that the combinatorial quotient is defined even if the toric variety $Z$ is not projective. In this paper we will compute the Chow quotient of a projective non toric variety, and we will construct it upon the combinatorial quotients of some toric, non projective, invariant open subsets. 

The quotient fan $\pi(\Sigma)$ is not necessarily equal to the union of all the projected cones. Indeed, it may happen that the projections of two cones $\sigma,\sigma'$ intersect in a subset which is not a face of $\pi(\sigma)$ or $\pi(\sigma')$. The following definition will be used in Section \ref{ssec:boundarydiv}:

\begin{definition}\label{def:rel}
	A cone $\sigma\in\Sigma$ is \emph{relevant} for $\pi$ if there exists $\sigma'\in\Sigma$ such that $\pi(\sigma)\cap \pi(\sigma') \not\preccurlyeq \pi(\sigma)$. We call $\sigma'$ a \emph{relevant companion} for $\sigma$. 
\end{definition}

\subsection{The projective linear group}\label{ssec:prelimPGL}

We recall here some known facts about the projective linear group, and introduce some notation we will use later on: 

\begin{itemize}[leftmargin= \xx pt,itemsep=4pt,after=\vspace{10pt}]
	
	\item $G=\PGL(n+1)$ will denote the {\em projective linear group}, i.e., the group of automorphisms of $\P^n:=\P(\C^{n+1})$, i.e., the homothety classes of non-singular $(n+1)\times (n+1)$ matrices. 
	
	\item $\fg=\fsl_{n+1}$ will denote the {\em Lie algebra of} $G$, that can be described as the Lie algebra of $(n+1)\times (n+1)$ matrices of trace $0$.

	\item $H\subset B\subset G$ will denote, respectively, the {\em (Cartan) subgroup} of classes of diagonal matrices, and the {\em (Borel) subgroup} of classes of upper triangular matrices; their associated Lie algebras will be denoted by $\fh\subset\fb\subset \fg$.

	\item The group of characters  $\Mo(H)$ of $H$ will be identified with the subgroup of $\Z^{n+1}$ of elements whose sum of coordinates is equal to zero. It is generated by: 
	$$
	\Delta:=\{\alpha_{i}:=e_{i-1}-e_{i}, \quad i=1,\dots,n\},
	$$
	where $\{e_0,\dots,e_n\}$ denotes the canonical $\Z$-basis of $\Z^{n+1}$; the element $\alpha_i$ corresponds to the linear map $H\to \C^*$ sending the class $\lambda$ of a diagonal matrix with diagonal entries $(\lambda_0,\lambda_1,\dots,\lambda_n)$ to $\lambda_{i-1}\lambda_i^{-1}$, $i=1,\dots,n$.
	The vector space $\Mo(H)_\R$, endowed with the scalar product $(\bullet,\bullet)$ induced by the Killing form, is then an Euclidean vector subspace of $\Z^{n+1}\otimes_\Z \R\simeq\R^{n+1}$. 
	
	\item The choice of $H\subset B\subset G$ determines a {\em root system} $\Phi\subset \Mo(H)$, and a subset of positive simple roots, which is precisely $\Delta=\{\alpha_i,\,\,i=1,\dots,n\}\subset \Phi$. The  positive elements  $\Phi^+\subset \Phi$ can be written as:
	$$\alpha_{i,j}=e_{i-1}-e_{j}, \quad i\leq j, \mbox{ and}\quad\alpha_{i,j}(\lambda)=\lambda_{i-1}\lambda_j^{-1}.$$
	Note that $\alpha_{i,i}=\alpha_{i}$, for every $i\geq 1$, $\alpha_{i,j}=\alpha_{i}+\dots +\alpha_{j}$ if $i< j$,
	and that every $\alpha_{i,j}$ has length equal $\sqrt{2}$. 
	Moreover, for every $\rho=\sum_{i=1}^nm_i\alpha_i\in \Mo(H)$, and every $\lambda=[(\lambda_0,\lambda_1,\dots,\lambda_n)]\in H$, we have:
	$$
	\lambda^{\rho}:=\rho(\lambda)=\lambda_0^{m_0}\lambda_1^{m_1-m_0}\dots\lambda_{n-1}^{m_{n-1}-m_n}\lambda_{n-1}^{m_n}.
	$$
	
\item The associated {\em Cartan decomposition of }$\fg$ will be:
		$$
		\fg=\fh\oplus \bigoplus_{i\leq j}\fg_{\alpha_{i,j}}\oplus \bigoplus_{i\leq j}\fg_{-\alpha_{i,j}},
		$$ 
		where $\fg_{\pm\alpha_{i,j}}$ denotes the $\pm\alpha_{i,j}$-eigenspace. Note that, for every $1\leq i\leq j\leq n$, $\fg_{\alpha_{i,j}}$ is generated by the nilpotent triangular matrix $U_{ij}$ whose coordinates are 
		$$
		(U_{ij})_{k\ell}=\left\{\begin{array}{ll}1&\mbox{ if }k=i-1, \ell=j,\\0&\mbox{ otherwise, }\end{array}\right.
		$$
		and $\fg_{-\alpha_{i,j}}$ is generated by $L_{ij}:=U_{ij}^t$.
		We then have that $\fb=\fh\oplus \bigoplus_{i\leq j}\fg_{\alpha_{i,j}}$; the subalgebra $\bigoplus_{i\leq j}\fg_{-\alpha_{i,j}}$, which is the Lie algebra of nilpotent lower triangular matrices, will be denoted by $\fn$.
	\item The character lattice $\Mo(H)$ is a sublattice of index $(n+1)$ of the {\em weight lattice} $\Lambda(H)\subset \Mo(H)_\R$, which is generated  the {\em fundamental dominant weights} $w_1,\dots, w_n\in \Mo(H)_\R$, defined by:
	$$2\dfrac{(w_i,\alpha_j)}{(\alpha_j,\alpha_j)}=(w_i,\alpha_j)=\delta_{ij}.$$ 
	
	\item The lattice $\No(H)$ of {\em $1$-parameter subgroups} of $H$ is the dual lattice of $\Mo(H)$. We may identify it with the weight lattice $\Lambda(H)$, by considering the map:
	$$
	\Lambda(H)\to \No(H)=\Hom(\Mo(H),\Z),\quad w\mapsto (w,\bullet).
	$$

	\item The {\em Weyl group} of $G$ associated with $H$ will be denoted by $W=\No_G(H)/H$. The reflection in $\Mo(H)_\R$ with respect to  
$\alpha_i\in \Delta$ will be denoted by $r_i\in W$. 
	
	\item $W$ is isomorphic to the group $S_{n+1}$ of permutations  of the set $\{0,1,\dots,n\}$; for every permutation $\sigma\in S_{n+1}$ we may consider the corresponding permutation matrix $P(\sigma)$, and its class in $\PGL(n+1)$, which belongs to the normalizer of $H$. It is well known that $W$ is generated  by the classes modulo $H$ of these elements. Abusing notation, we will denote by $\sigma\in\PGL(n+1)$ the class of the matrix $P(\sigma)$, for every permutation $\sigma$, and we will identify $W$ with the subgroup of $\PGL(n+1)$ formed by the classes of these elements.
	
	\item The natural action of $W$ on $H$ (conjugation) induces an action of $W$ on $\Mo(H)$ given by $\sigma(\alpha)=\alpha\circ \conj_{\sigma^{-1}}$, for every $\sigma\in W$. This action extends to a linear action on $\Mo(H)_\R$, to an action on the lattice $\Lambda(H)$, and to an action on the lattice $\No(H)$ of $1$-parametric subgroups, given by $\sigma(\mu)=\conj_{\sigma}\circ\mu$, for every $\mu\in \No(H)$, which is compatible with the isomorphism $\Lambda(H)\to \No(H)$.   
	
	\item The involutive automorphism of $\PGL(n+1)$ sending the homothety class of a matrix $A$ to the class of the conjugation  of the inverse of  $A^\top$ with the permutation $w_0:=(n,n-1,\dots,1,0)\in W$ will be called the {\em anti-transposition map} of $\PGL(n+1)$, and denoted by $\tau$. It sends $B$ to $B$ and $H$ to $H$, and  it induces an automorphism of $\Mo(H)$ that sends every $\alpha_i$ to $\alpha_{n+1-i}$. In particular, $r_i\circ \tau=\tau\circ r_{n+1-i}$, for every $i$. It is well known that the group generated by $W$ and $\tau$ is isomorphic to $W\rtimes \ZZ/2\ZZ$. In the case $n=3$ (which is the case we will be concerned with in this paper) this group is usually called the (full) {\em octahedral symmetry group}, i.e., the group of symmetries of the octahedron.
\end{itemize}		
\subsection{Fundamental subtori}\label{ssec:fundasubtori}
In this section we will define some particular subtori of $H$, that we will use later to describe some special divisors in the Chow quotient of the manifold of complete flags in $\C^4$. Let us denote by $\OP_{n+1}$ the set of {\em ordered set partitions} of $\{0,1,\dots,n\}$. An element of $\OP_{n+1}$ is a sequence $(S_0,S_1,\dots)$ of disjoint nonempty subsets with $\bigcup S_i=\{0,1,\dots,n\}$. Among them we have the trivial partition, consisting of only one subset, $\{0,1,\dots,n\}$.

\begin{definition}\label{def:fundamsubt}
Given a nontrivial ordered partition $\phi=(S_0,S_1,\dots)$ of $\{0,1,\dots,n\}$ in $(k+1)$ subsets, with $k=1,\dots,n$, we define a lattice homomorphism $\mu_{\phi}:\Mo(H)\subset \Z^{n+1}\to \Z^{k+1}$ as follows:
$$
\mu_{\phi}(m_0,m_1,\dots,m_n)=\left(\sum_{i\in S_0}m_i,\,\,\dots\,\, , \sum_{i\in S_{k}}m_i\right).
$$
Its image is the rank $k$ sublattice $\Mo(\phi)\subset\Z^{k+1}$ of elements whose sum of coordinates is equal to $0$, and which is generated by the $\Z$-basis:
$$
\{\alpha'_{i}:=e'_{i-1}-e'_{i}, \quad i=1,\dots,k\},
$$
where $\{e'_i,\,\, i=0,\dots,k\}$ denotes the canonical $\Z$-basis of $\Z^{k+1}$. 
Given a nontrivial ordered partition $\phi\in\OP_{n+1}$, the subtorus $T(\phi)$ of $H$ determined by the weight map $\mu_{\phi}$ will be called the {\em fundamental subtorus} associated to $\phi$; by definition, we have that $\Mo(T(\phi))=\Mo(\phi)$. An associated concept we will use is the following:
\end{definition}

\begin{definition}\label{def:pos1param}
Given a nontrivial partition $\phi$ as above, a {\em nonnegative $1$-pa\-ra\-me\-tric subgroup} of the subtorus $T(\phi)\subset H$ is a $1$-parametric subgroup of $T(\phi)$ determined by a nonzero group homomorphism $\rho:\Mo(\phi)\to \Z$ satisfying that $\rho(\alpha'_i)\geq 0$ for every $i=1,\dots,k$. The set of nonzero group homomorphisms $\rho$ satisfying this property will be denoted by $R(\phi)$. 
\end{definition}

Note finally that we have a natural action of $W=S_{n+1}$ on $\OP_{n+1}$, which obviously satisfies the following:
\begin{lemma}\label{lem:ordparti}
Given $\phi\in \OP_{n+1}$, and $w\in W$, we have that
$$
\mu_{w(\phi)}=\mu_{\phi}\circ w,\quad\mbox{and}\quad T(w(\phi))=\conj_w(T(\phi)).
$$
\end{lemma}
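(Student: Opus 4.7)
The statement records the compatibility of three natural $W$-actions: on $\OP_{n+1}$, on the character lattice $\Mo(H)\subset\Z^{n+1}$, and on $H$ itself by conjugation. Both identities are purely formal and my plan is to verify them by a direct unwinding of the definitions, not by invoking any auxiliary construction.

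For the first identity, I would evaluate both sides on a general element $m=(m_0,\dots,m_n)\in\Mo(H)$. Directly from Definition~\ref{def:fundamsubt},
\[
\mu_{w(\phi)}(m)=\left(\sum_{i\in w(S_0)}m_i,\;\ldots,\;\sum_{i\in w(S_k)}m_i\right),
\]
and the substitution $j=w^{-1}(i)$ rewrites the $\ell$-th entry as $\sum_{j\in S_\ell}m_{w(j)}$. On the other hand, the $W$-action on $\Mo(H)$ described in Section~\ref{ssec:prelimPGL} (coming from $w\cdot\alpha=\alpha\circ\conj_{w^{-1}}$) realises $w$ as a permutation of the coordinates of $\Z^{n+1}$; plugging this permutation into the definition of $\mu_\phi$ produces exactly the same expression for $\mu_\phi(w(m))$, yielding the equality.

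For the second identity, I would use the explicit realisation of $T(\phi)$ as the subtorus dual to the surjection $\mu_\phi:\Mo(H)\to\Mo(\phi)$, namely
\[
T(\phi)=\big\{[\diag(\lambda_0,\dots,\lambda_n)]\in H\,\big|\,\lambda_i=\lambda_j \text{ whenever } i,j\in S_\ell \text{ for some } \ell\big\}.
\]
The conjugation $\conj_w$ acts on $H$ by permuting diagonal entries, sending $[\diag(\lambda_0,\dots,\lambda_n)]$ to $[\diag(\lambda_{w^{-1}(0)},\dots,\lambda_{w^{-1}(n)})]$, and under this permutation the defining equalities of $T(\phi)$ are transported exactly to the defining equalities of $T(w(\phi))$; this gives one inclusion, and the reverse follows by applying the same argument to $w^{-1}$. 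Equivalently, the identity is a formal corollary of the first one via the contravariant equivalence between algebraic tori and their character lattices, which turns the equality $\mu_{w(\phi)}=\mu_\phi\circ w$ into the desired equality of subtori of $H$.

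Neither step is a real obstacle; the only delicate point is to keep the three $W$-actions consistently oriented throughout, so that no stray inversion of $w$ is introduced when translating between the action on $\OP_{n+1}$, on $\Mo(H)$, and on $H$.
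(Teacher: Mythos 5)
Your proof is correct; the paper gives no argument at all (the lemma is introduced with the phrase ``which obviously satisfies the following''), so your direct unwinding of Definition~\ref{def:fundamsubt}, of the induced $W$-action on $\Mo(H)$, and of the conjugation action on $H$ simply supplies the verification the authors omit. The orientation issue you flag at the end is genuine but self-resolving: the paper never writes down its convention for the $W$-action on $\OP_{n+1}$ (nor for the permutation matrices $P(\sigma)$), so the stated identities are precisely what pin that convention down, and your computation confirms that the natural choice $w(\phi)=(w(S_0),\dots,w(S_k))$ is the consistent one.
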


\subsection{The complete flag variety}\label{ssec:compflag}

We keep the notation of the previous sections, and recall some properties of the complete flag manifold $F:=G/B$  associated with $G$. It is well known that $F$ is a smooth projective Fano variety of dimension $\binom{n+1}{2}$ and Picard number $n$. 

There is a natural isomorphism $\Lambda(H)\simeq \Pic(F)$ defined as follows: given a weight $\lambda\in \Lambda(H)$, we have a line bundle $$L(\lambda):=G\times^B \C:=(G\times\C)/\sim,\quad (g,v)\sim (gb,\xi(b)^{\lambda}v)\mbox{ for every $b\in B$,}$$
where $\xi:B\to H$ denotes the natural map sending an upper triangular matrix to its semisimple part. Under such isomorphism the nef cone $\Nef(F)$ of $F$  can be identified with the cone generated by the fundamental dominant weights $w_i$, $i=1,\dots,n$.  
It is known that every effective divisor in $F$ is globally generated, and $\Nef(F)=\ol{\Mov}(F)=\ol{\Eff}(F)\subset \NU(F)$.  

\begin{notation}\label{not:naturalbundle}
	In the sequel we will consider the complete flag variety $F$ polarized with its {\em minimal ample line bundle}, that we define as the tensor product of the pullback of the ample generators of the Picard groups of all the contractions of $F$ to Picard number one varieties, i.e., the projective space $\P^n$, its dual and the Grassmannians of linear subspaces of $\P^n$. Notice that $L^{\otimes 2}=\cO_F(-K_F)$, and that, in weight fashion,	we may write:
$$
L=L(\lambda_{\min}),\quad\mbox{with}\quad \lambda_{\min}=\sum_{k=1}^n \dfrac{kn-k^2+k}{2}\alpha_k=\sum_{i=0}^n\left(\dfrac{n}{2}-i\right)e_i.
$$
\end{notation}


\section{Maximal torus action on the complete flag}\label{sec:Chow}

Throughout the rest of the paper we will work in the following situation:

\begin{setup}\label{setup:rest}
Let $G$ be the simple algebraic group $\PGL(4)$ of homothety classes of $4\times 4$ matrices, let $H\subset B\subset G$ be, respectively, the sets of classes of diagonal and upper triangular matrices, and  let $F=G/B$  be the complete flag variety of $\C^4$. The Weyl group of $G$ will be denoted by $W$, and we will always identify it with the group of homothety classes of permutation matrices (see Section \ref{ssec:prelimPGL}). We will consider $F$ polarized with the minimal ample line bundle $L$ (see Notation \ref{not:naturalbundle}). The normalized Chow quotient of $F$ by the action of the torus $H\subset G$ 
will be denoted by:
$$
X:=\mathcal{C}\mkern-1mu F.
$$
\end{setup}
Besides, we will freely use the notation introduced in Section \ref{sec:prelim}.

\subsection{Fixed flags and their weights}

We start by recalling the following well known statement about the $H$-action on the flag manifold $F$:

\begin{lemma}\label{lem:Bruhat}
	With the above notation: 
	\begin{enumerate}[leftmargin=\yy pt]
		\item the set of fixed points of the action of $H$ on the complete flag manifold $F$ is:
		$$
		F^H=\left\{\sigma B|\,\,\sigma\in  W\right\}.
		$$ 
		\item For every $\sigma\in  W$, the set of weights of the $H$-action on the tangent space $T_{F,\sigma B}$ is $\{\sigma (\alpha_{i,j})|\,\, i>j\}$. 
		\item For every $\lambda\in \Lambda(H)$, there exists a linearization of the $H$-action on the line bundle $L(\lambda)$ whose weights are:
		$$
		\mu^H_L(\lambda)(\sigma B)=
		-\sigma(\lambda)\in \Mo(H), \mbox{ for every }\sigma\in  W.
		$$
	\end{enumerate}
\end{lemma}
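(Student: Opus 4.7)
My plan is to handle the three statements in order, using standard facts about the geometry of $G/B$ together with the description of $L(\lambda)$ recalled in Section \ref{ssec:compflag}.

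For (1), the idea is to combine the Bruhat decomposition $G=\bigsqcup_{w\in W}BwB$ with the fact that $H$ is maximal. A point $gB\in F$ is $H$-fixed exactly when $g^{-1}Hg\subset B$; since $g^{-1}Hg$ is then a maximal torus of $B$, it is conjugate to $H$ by an element $b\in B$, giving $gb\in N_G(H)$ and hence $gB=\sigma B$ for some $\sigma\in W$. Conversely, every $\sigma B$ is clearly $H$-fixed because $\sigma\in N_G(H)$ normalizes $H$. Alternatively, one can argue cell-by-cell: each Schubert cell $BwB/B$ is $H$-invariant, and the $H$-action on $U^+wB/B\simeq U^+$ (where $U^+$ denotes the unipotent radical of $B$) has $wB$ as its unique fixed point.

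For (2), the tangent representation at $eB$ is the $H$-module $\fg/\fb\simeq\fn$, whose weights are by construction the negative roots. At a general fixed point $\sigma B$, left multiplication by $\sigma$ gives an isomorphism $d\sigma\colon T_{F,eB}\to T_{F,\sigma B}$ that intertwines the $H$-action at $\sigma B$ with the $\conj_{\sigma^{-1}}(H)=H$-action at $eB$; this is exactly the twist by $\sigma$ of the weights of $\fg/\fb$, yielding the stated set $\{\sigma(\alpha_{i,j})\}$.

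For (3), I would compute directly from the construction $L(\lambda)=G\times^B\C$. A fibre element at $\sigma B$ has a representative $[\sigma,v]$, and the $H$-action sends it to $[t\sigma,v]=[\sigma\cdot(\sigma^{-1}t\sigma),v]$. Using the equivalence $(g,w)\sim(gb,\xi(b)^\lambda w)$ to move the element $\sigma^{-1}t\sigma\in H\subset B$ across, one recovers a representative of the form $[\sigma,\xi(\sigma^{-1}t\sigma)^{-\lambda}v]$; since by definition $\sigma(\lambda)(t)=\lambda(\conj_{\sigma^{-1}}(t))$, the scalar acting on $v$ is $(-\sigma(\lambda))(t)$, giving the weight $-\sigma(\lambda)$ claimed.

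The routine steps are the Bruhat-type argument in (1) and the pushforward under $\sigma$ in (2). The only point requiring genuine bookkeeping is the sign in (3): one must carefully track in which direction the $B$-equivalence relation is applied and how the paper's $W$-action on characters ($\sigma(\alpha)=\alpha\circ\conj_{\sigma^{-1}}$) interacts with conjugation on $H$. This is the step where a wrong convention would silently flip signs, and it is where I would be most deliberate.
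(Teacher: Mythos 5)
Your proof is correct. Note, however, that the paper does not actually prove this lemma: its entire ``proof'' is a pointer to \cite[Propositions~3.6,~3.7]{WORS5}, so there is no argument in the text to compare against. What you have written is the standard argument that those cited propositions encapsulate, and all three steps check out: the fixed-point computation via $g^{-1}Hg\subset B$ and conjugacy of maximal tori in $B$, the identification $T_{F,eB}\simeq\fg/\fb\simeq\fn$ twisted by $d\sigma$ using the convention $\sigma(\alpha)=\alpha\circ\conj_{\sigma^{-1}}$, and the fibrewise computation $t\cdot[\sigma,v]=[\sigma,\,t^{-\sigma(\lambda)}v]$ giving the weight $-\sigma(\lambda)$. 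One small notational caveat: the paper only defines $\alpha_{i,j}=e_{i-1}-e_j$ for $i\leq j$, so in part (2) the set $\{\sigma(\alpha_{i,j})\mid i>j\}$ must be read with the implicit convention $\alpha_{i,j}:=-\alpha_{j,i}$ for $i>j$ (taking the displayed formula literally would give $0$ when $i=j+1$); with that reading, your identification of the tangent weights at $eB$ with the negative roots matches the statement exactly, and the rest follows.
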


\begin{proof}
	See \cite[Propositions~3.6,~3.7]{WORS5} and the references therein.
\end{proof}

Note that, denoting by $\{\ol{e}_0,\ol{e}_1,\ol{e}_2,\ol{e}_3\}$ the canonical basis of $\C^4$, the flag corresponding to the point $\sigma B$, $\sigma\in W$, is:
$$
\{0\}\subset\langle \ol{e}_{\sigma(0)}\rangle \subset \langle \ol{e}_{\sigma(0)},\ol{e}_{\sigma(1)} \rangle \subset\langle \ol{e}_{\sigma(0)}, \ol{e}_{\sigma(1)}, \ol{e}_{\sigma(2)}\rangle \subset\C^4
$$

Note also that, for our chosen polarization, we have that:
$$
L=L(\lambda_{\min}),\quad\mbox{with}\quad \lambda_{\min}=\dfrac{1}{2}(3\alpha_1+4\alpha_2+3\alpha_3)=\sum_{i=0}^3\left(\dfrac{3}{2}-i\right)e_i,
$$
so we get
$$
\mu^H_L(\sigma B)=\sum_{i=0}^3\left(\dfrac{3}{2}-\sigma(i)\right)e_i, \mbox{ for every }\sigma\in  W.
$$
In particular, every fixed point $\sigma$ is uniquely determined by its $L$-weight, which, in coordinates with respect to $\{e_i\}$, is the permutation by $\sigma$ of the coordinates $(-3/2,-1/2,1/2,3/2)$.
Seen as points in $\Mo(H)_\R\subset\R^4$, 
these weights are the vertices of their convex hull, which is a $3$-dimensional polyhedron, classically known as the $3$-dimensional {\it permutohedron}. Throughout the paper will denote it by 

$$
\begin{array}{rl}
P:=&\conv\left(\{\mu^H_L(\sigma B),\,\,\sigma\in W\}\right)=\\[4pt]
=&\displaystyle\left\{\sum_{\sigma\in W}a_\sigma \sigma\left(-\frac{3}{2},-\frac{1}{2},\frac{1}{2},\frac{3}{2}\right),\,\, a_\sigma \in [0,1],\,\, \sum_{\sigma\in W}a_\sigma=1\right\}.
\end{array}
$$

\begin{figure}[h!!]
	\centering
	\resizebox{0.44\textwidth}{!}{%
		\begin{tikzpicture}
			\node at (0, 0) {\includegraphics[width=0.5\textwidth]{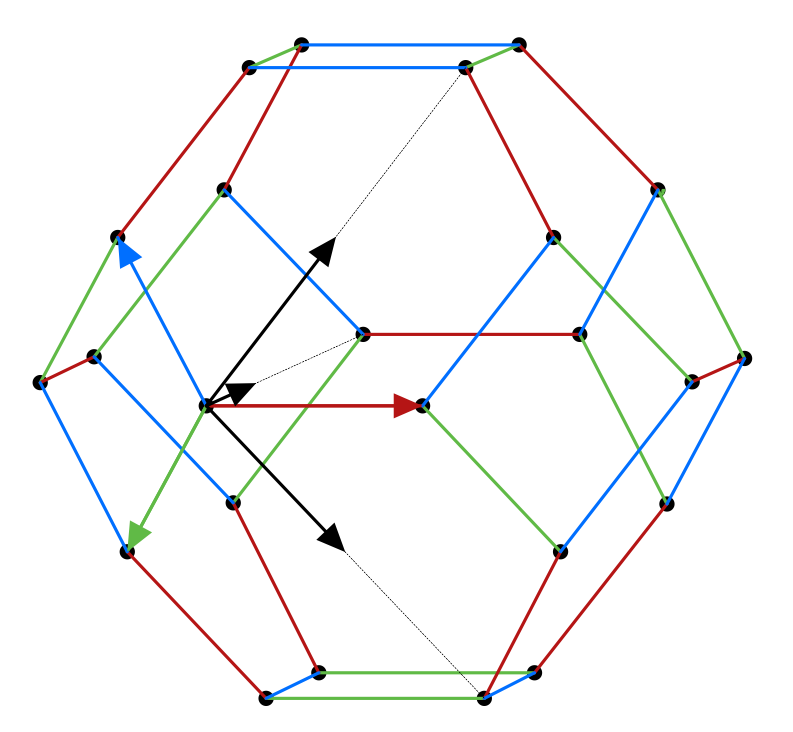}};
			\node[rotate=0] at (-1.7, 0.5) {$\alpha_1$};
			\node[rotate=0] at (-0, -0.1) {$\alpha_2$};
			\node[rotate=0] at (-2.1, -0.9) {$\alpha_3$};
			\node[rotate=0] at (-0.15, 0.85) {$\alpha_{1,2}$};
			\node[rotate=0] at (-0.2, -1.2) {$\alpha_{2,3}$};
			\node[rotate=0] at (-0.8, -0.05) {$\alpha_{1,3}$};
		\end{tikzpicture}  
	}
	\caption{The permutohedron, decorated with the basis of eigenvectors of $T_{F,eB}$.\label{fig:permuto}}
\end{figure}

\subsection{The permutohedron and the general orbit of the torus action}\label{permutagenorbit}

It is a general fact (see \cite{HuChow}, and \cite{CarrellNormal} for a proof in the homogeneous case), in the setting of torus actions on projective varieties, that the polytope generated by the weights of the action on an ample line bundle  is a moment polytope for the toric variety defined as the closure of the orbit of a general point in the variety. In our setting, this fact can be checked by means of a straightforward linear algebra argument (essentially Gauss reduction), that we include here because it will be used in the next section.  We will show the following:

\begin{proposition}\label{prop:momentgen}
In the situation of Setup \ref{setup:rest}, there exists an $H$-invariant open set $U_e\subset F$ such that $gB\in U_e$ if and only of $eB\in \ol{HgB}$.
\end{proposition}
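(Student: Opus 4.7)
The plan is to take $U_e$ to be the ``big Bruhat cell'' $\exp(\fn)\cdot eB\subset F$, that is, the set of flags $V_\bullet$ for which the projection $V_k\to\langle\ol{e}_0,\dots,\ol{e}_{k-1}\rangle$ along $\langle\ol{e}_k,\dots,\ol{e}_3\rangle$ is an isomorphism for every $k=1,2,3$; equivalently (Gauss reduction), the set of cosets $gB$ such that all leading principal minors of a matrix representative $g$ are nonzero. This subset is manifestly open and $H$-invariant in $F$, and via $\exp$ it is $H$-equivariantly identified with $\fn$.

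For the forward implication, I will exhibit an explicit curve in $H\cdot gB$ converging to $eB$. Given $gB=\exp(N)\cdot eB$ with $N\in\fn$, consider the $1$-parameter subgroup $\lambda(t)=[\operatorname{diag}(1,t,t^2,t^3)]\subset H$. Since $\lambda(t)$ fixes $eB$, one has
\[
\lambda(t)\cdot gB=\bigl(\lambda(t)\exp(N)\lambda(t)^{-1}\bigr)\cdot eB=\exp\bigl(\Ad(\lambda(t))N\bigr)\cdot eB.
\]
A direct computation shows that $\Ad(\lambda(t))$ acts on each eigenvector $L_{ij}\in\fg_{-\alpha_{i,j}}$ (with $i\leq j$) by the scalar $t^{j-i+1}$, which tends to $0$ as $t\to 0$ because $\lambda$ pairs positively with every negative root. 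Hence $\Ad(\lambda(t))N\to 0$ in $\fn$, so $\lambda(t)\cdot gB\to eB$ and $eB\in\ol{H\cdot gB}$.

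For the backward implication, I will use Plücker coordinates. The $H$-action on the Plücker coordinate $[I]$ of the $k$-th subspace of a flag (with $I\subset\{0,1,2,3\}$, $|I|=k$) has weight $\sum_{i\in I}e_i$; these weights are all distinct, and $eB$ is characterized by the non-vanishing of exactly the ``leading'' coordinates $I=\{0,\dots,k-1\}$ for $k=1,2,3$. If $gB\notin U_e$, then some leading principal minor of $g$ vanishes, so some leading Plücker coordinate of the flag $gB$ is zero. Since the $H$-action scales each Plücker coordinate independently, this vanishing persists along $H\cdot gB$ and along its closure, whence $eB\notin\ol{H\cdot gB}$. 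There is no substantial obstacle here: once the $H$-eigenbasis of the Plücker coordinates is fixed, everything reduces to routine linear algebra, and the only delicate point is to choose a $1$-parameter subgroup that actually contracts $\fn$ toward the identity, for which $\lambda(t)=[\operatorname{diag}(1,t,t^2,t^3)]$ does the job.
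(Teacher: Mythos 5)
Your proposal is correct and follows essentially the same route as the paper: the same open set $U_e$ (the big Bruhat cell, cut out by nonvanishing leading principal minors), the forward implication via the $LU$-factorization and a $1$-parameter subgroup contracting $\fn$ to $0$ (the paper allows any $\mu$ with $\mu(\alpha_i)<0$ where you fix $\lambda(t)=[\operatorname{diag}(1,t,t^2,t^3)]$), and the converse via the observation that $F\setminus U_e$ is closed and $H$-invariant, which your Pl\"ucker-coordinate argument simply makes explicit.
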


Polarizing the toric variety $\ol{HgB}$ with the restriction of $L$, the corresponding moment polytope is the convex hull of the $L$-weights of the  fixed points of the action on $\ol{HgB}$, which  are a subset of $F^H=\{\sigma B,\,\, \sigma \in W\}$. The above statement implies  that the closure of the orbit of a point $gB$ belonging to the intersection $\bigcap_{\sigma\in W}\sigma(U_e)$ contains $F^H$:

\begin{corollary}\label{cor:momentgen}
In the situation of Setup \ref{setup:rest}, let $gB\in F$ be a general point. Then the moment polytope of the polarized projective toric variety $(\ol{HgB},L_{|\ol{HgB}})$ is equal to the permutohedron $P$.
\end{corollary}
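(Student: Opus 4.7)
The plan is to deduce the corollary directly from Proposition \ref{prop:momentgen} by exploiting the action of the Weyl group $W$ on both the flag variety and the character lattice.

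First I would recall the standard fact that for any $H$-invariant closed subvariety $Z\subset F$, the moment polytope of $(Z,L_{|Z})$ is the convex hull in $\Mo(H)_\R$ of the weights $\mu^H_L(z)$ for $z\in Z^H$, and that $Z^H\subset F^H=\{\sigma B:\sigma\in W\}$ by Lemma \ref{lem:Bruhat}. Applied to $Z=\overline{HgB}$, this reduces the problem to showing that for a general $gB$, the fixed point set $(\overline{HgB})^H$ equals all of $F^H$, because then the moment polytope is the convex hull of $\{\mu^H_L(\sigma B):\sigma\in W\}$, which is exactly $P$ by definition.

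The key observation is that one can ``transport'' Proposition \ref{prop:momentgen} along the Weyl group. For each $\sigma\in W$, set $U_\sigma:=\sigma\cdot U_e\subset F$. Since $W$ normalizes $H$ in $G$ (indeed $\sigma H\sigma^{-1}=H$ for every $\sigma\in W$), the translate $U_\sigma$ is again an $H$-invariant open subset of $F$. Moreover, $gB\in U_\sigma$ if and only if $\sigma^{-1}gB\in U_e$, which by Proposition \ref{prop:momentgen} is equivalent to $eB\in\overline{H\sigma^{-1}gB}=\overline{\sigma^{-1}HgB}$, i.e.\ to $\sigma B\in\overline{HgB}$. Thus $U_\sigma$ is precisely the locus of points whose $H$-orbit closure contains the fixed flag $\sigma B$.

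The corollary then follows by choosing $gB$ in the nonempty open intersection
\[
V:=\bigcap_{\sigma\in W}U_\sigma,
\]
which is nonempty as a finite intersection of nonempty open subsets of the irreducible variety $F$. For any $gB\in V$, every $\sigma B$ lies in $\overline{HgB}$, so $(\overline{HgB})^H\supseteq F^H$; combined with the reverse inclusion noted above, this gives equality, and hence the moment polytope is $P$. I don't foresee a serious obstacle here — the only subtlety is verifying the $H$-invariance of each $U_\sigma$ and the clean conversion ``$\sigma^{-1}gB\in U_e\Leftrightarrow \sigma B\in \overline{HgB}$'', both of which hinge on $W$ normalizing $H$.
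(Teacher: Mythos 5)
Your argument is correct and coincides with the paper's: the paper likewise observes that the moment polytope is the convex hull of the $L$-weights of the fixed points of $\ol{HgB}$, and deduces the corollary by taking $gB$ in the nonempty $H$-invariant open set $\bigcap_{\sigma\in W}\sigma(U_e)$, so that Proposition \ref{prop:momentgen}, transported by the ($H$-normalizing) Weyl group action, forces $F^H\subset\ol{HgB}$. The only difference is that you spell out the equivalence $\sigma^{-1}gB\in U_e\Leftrightarrow\sigma B\in\ol{HgB}$ explicitly, which the paper leaves implicit.
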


Before proving Proposition \ref{prop:momentgen}, let us introduce the following notation:

\begin{notation}\label{notn:minors}
	Let $\sigma\in  W$ be a permutation. Given a $4\times 4$ matrix $M$, and an integer $k\in\{1,2,3\}$, we denote by $M^{\sigma}_k$ the $k\times k$ submatrix of $M$ given by
	$$
	(M^{\sigma}_k)_{i,j}=M_{\sigma(i),j}, \quad i,j\in \{0,1,\dots,k-1\}.
	$$
	Note that multiplying by a nonsingular diagonal matrix (on the left or on the right), or by a nonsingular upper triangular matrix on the right, does not change the values of $\sigma,k$ for which $\det(M^{\sigma}_k)=0$. We then define the $H$-invariant open subset:
	$$
	U_\sigma:=\left\{gB\in F|\,\, g=[M],\,\,\det(M^{\sigma}_k)\neq 0,\,\, k=1,2,3\right\}\subset F.
	$$
\end{notation}
By definition, for every $\sigma\in W$ we have:
$$
U_\sigma=\sigma(U_e).
$$
Let us now prove the above Proposition:

\begin{proof}[Proof of Proposition \ref{prop:momentgen}]
	By definition, a point $gB$ belongs to  $U_e$ if $g$ is the homothety class of a matrix $M$ whose principal minors are different from zero. Equivalently (by Gauss reduction), it can be written as the product $M=LU$ of a lower unipotent triangular matrix $L$ and an upper triangular matrix $U\in B$ so, denoting the corresponding homothety classes by $\ell, b\in \PGL(4)$, we have that:
	$$gB=\ell b B=\ell B.$$
	Let us write $L$ as: 
	$$
	L=\begin{pmatrix}
		1 & 0 & 0 & 0 \\ 
		y_{11} & 1 & 0 & 0 \\
		y_{12}  & y_{22}  & 1 & 0 \\
		y_{13}  & y_{23}  & y_{33} & 1 \\
	\end{pmatrix}.
	$$
	Consider now a $1$-parameter subgroup $\mu\in \Mo(H)^*$, satisfying that $\mu(\alpha_{i,i})<0$ for every $i$ (and so $\mu(\alpha_{i,j})<0$, for every $i,j$). Given $h\in H$, we will have:
	$$
	\mu(h)(g B)=\mu(h)(\ell B)=
	\left[\begin{pmatrix}
		1 & 0 & \ldots & 0 \\ 
		t^{-\mu(\alpha_{1,1})}y_{11} & 1 & 0 & 0 \\
		t^{-\mu(\alpha_{1,2})}y_{12}  & t^{-\mu(\alpha_{2,2})}y_{22}  & 1 & 0 \\
		t^{-\mu(\alpha_{1,3})}y_{13} & t^{-\mu(\alpha_{2,3})}y_{23} & t^{-\mu(\alpha_{3,3})}y_{33} & 1 \\
	\end{pmatrix}\right]B.
	$$ 
	So we may conclude that $\lim_{t\to 0} t^{\mu}(gB)=eB$ and, in particular, that $eB\in \overline{H\cdot gB}$. 
	
	For the converse, note that $F\setminus U_e$ is an $H$-invariant closed subset; then, if $gB\in F\setminus U_e$, we have $\overline{H\cdot gB}\subset F\setminus U_e$ so, in particular, $eB\notin \overline{H\cdot gB}$.
\end{proof}

\subsection{Fundamental subtori and their associated $H$-invariant subsets}\label{ssec:fundasub}

Let us consider the set  $\OP_4$ of {\em ordered set partitions} of $\{0,1,2,3\}$ introduced in Definition \ref{def:fundamsubt}. It consists of $75$ elements; the trivial partition, $14$ ordered partitions with two elements, $36$ with $3$ elements and $24$ with $4$ elements. The natural action of $W=S_4$ has $8$ orbits on $\OP_4$ with, respectively, $1,4,6,4,12,12,12,24$ elements.

For our purposes we will only need to use partitions of the following types: 
\begin{itemize}[]
\item[($A_i$)] partitions $(S_0=\{i\},\,\, S_1=\{0,1,2,3\}\setminus\{i\})$,  ($0\leq i\leq 3$);
\item[($B_i$)] partitions  $(S_0=\{0,1,2,3\}\setminus\{i\},\,\, S_1=\{i\})$ ($0\leq i\leq 3$);
\item[($C_{ij}$)] partitions  $(S_0=\{i,j\},\,\, S_1=\{0,1,2,3\}\setminus\{i,j\})$ ($0\leq i<j\leq 3$);
\item[($D_{ij}$)] partitions $(S_0=\{i\},\, S_1=\{0,1,2,3\}\setminus\{i,j\},\,S_2=\{j\})$ ($0\leq i,j\leq 3$). 
\end{itemize}

Given a nontrivial partition $\phi\in\OP_4$, we consider the action on $F$ of the subtorus $T(\phi)$. One may easily check that the fixed point components of these action are flag manifolds; for instance, for partitions of types $A,B,C,D$ the fixed point components are rational homogeneous varieties (complete flag varieties of subspaces of $\C^3$ for the types $A,B$, $\P^1\times\P^1$ for type $C$ and $\P^1$ for the type $D$).

To each fixed point component $Y$ of the $T(\phi)$-action we will associate a closed subset of $F$, as follows:

\begin{definition}\label{def:fundHinv}
Let $\phi$ be a nontrivial ordered partition $\phi\in \OP_4$, consisting of $k+1$ subsets $S_0,\dots,S_k$, and let $R(\phi)$ be the set of group homomorphisms $\rho:\Mo(\phi)\to\Z$ defining nonnegative $1$-parametric subgroups of $T(\phi)$ (see Definition \ref{def:pos1param}). 
Let $Y$ be a fixed point component of the $T(\phi)$-action on $F$. We define:
$$
\displaystyle F^+_\phi(Y):=\ol{\left\{x\in F\left|\,\,\lim_{t\to 0} (t^{\rho\circ\mu_\phi}\,x)\in Y,\,\, \mbox{ for every }\rho \in R(\phi)\right.\right\}}.
$$
\end{definition}

It is straightforward to check that every $F^+_\phi(Y)$ is $H$-invariant for every $\phi$ and $Y$.
Note also that if $\phi$ is a partition with $2$ elements (types $A,B,C$), then $F^+_\phi(Y)$ is a Bia{\l}ynicki-Birula cell of $F$ with respect to some (fundamental) $\C^*$-action.

\section{Affine charts and their quotients}\label{sec:local}

We keep the notation and the assumptions of Setup \ref{setup:rest}. Corollary \ref{cor:momentgen} provides a description of the general cycle parametrized by the normalized Chow quotient $X$ of $F$. The remaining cycles are known to be unions of toric $H$-varieties whose moment polytopes are given by subdivisions of $P$ (see for instance \cite[Proposition 3.3]{Hu} or \cite[Proposition 1.2.11]{Kapranov}). We will see that the irreducible components of these cycles are the closures of orbits of points in subsets $F^+_\phi(Y)\subset F$ (see Definition \ref{def:fundHinv}), 
by analyzing the $H$-action on certain affine $H$-invariant charts $F_\sigma\subset F$, for $\sigma\in W$, namely the images by elements of $W$ of the open Bruhat cell of $F$. 
The main tool we will use is the concept of combinatorial quotient of a toric variety by the action of a subtorus (see Section \ref{ssec:prelimtorus}), and the fact that in our case the combinatorial quotients of the $H$-invariant charts are projective varieties (see Proposition \ref{prop:combquotisproj}), which are contractions of $X$. 

\subsection{$H$-invariant charts}\label{ssec:charts}

Recall that we have a natural $H$-equivariant isomorphism $T_{F,eB}\simeq \fg/\fb\simeq \fn$, which is the Lie algebra of nilpotent lower triangular matrices. Moreover $\fn$ is isomorphic to an open neighborhood of $eB$ in $F$, via the exponential map. Indeed, the morphism:
$$
\ex:\fn \to F,\quad \ex(g):=e^gB,
$$
is an open immersion (see, for instance, \cite[Proposition~1.2]{Huy}).
We define 
$$
F_e:=\ex(\fn)\subset F, \quad\mbox{and}\quad F_\sigma:=\sigma(F_e) \mbox{ for every $\sigma\in  W$.} 
$$
The subset $F_\sigma\subset F$ is an open neighborhood of $\sigma B$, for every $\sigma\in  W$. Furthermore, $F_\sigma$ is $H$-invariant; this is due to the fact that the exponential map commutes with conjugation, and that $\fn\subset \fg$ is $H$-invariant. Note that the map $\sigma:F_e\to F_\sigma$ is not $H$-equivariant, but it satisfies the property that for every $h\in H$ the following diagram is commutative :
$$
\xymatrix@C=35pt@R=20pt{F_e\ar[r]^h\ar[d]_{\sigma} &F_e\ar[d]^{\sigma}\\F_{\sigma}\ar[r]_{\conj_\sigma(h)}&F_\sigma}
$$
On the other hand, the inclusions of $F_e$ and $F_\sigma$ into $F$ provide an $H$-equivariant birational map:
\begin{equation}\label{eq:bir}
\varphi_\sigma:F_e\dashrightarrow F_\sigma.
\end{equation}
In the next section we will describe the combinatorial quotient of every $F_\sigma$ by the action of $H$. Obviously, thanks to the above $H$-equivariant isomorphisms it is enough to consider the case $\sigma=e$.

\subsection{The combinatorial quotient of the Lie algebra of upper triangular matrices}\label{sec:nilpotent}

In this section we  compute the quotient fan of the $H$-action on the affine chart $F_e\simeq \fn \simeq \C^6$. In analogy with Setup \ref{setup:rest}, we will denote the combinatorial quotient of $F_e$ as $X_e$.

We consider $F_e$ as an affine toric variety, with acting torus $T\simeq (\C^*)^6$, and whose associated cone is the positive orthant $\delta$ in $\No(T)_{\R}\simeq \fn(\R)$, which is the real vector space of nilpotent lower triangular matrices. We denote by $\Sigma(\delta)$ the fan of faces of $\delta$. The rays of the cone $\delta$ correspond to matrices $R_{ab}$, $1\leq a\leq b \leq 3$, with

\[ (R_{ab})_{i,j} =\begin{cases} 1 & \mbox{if } i=b+1, j=a\\ 0 & \mbox{otwerwise.} \end{cases} \]

The $H$-action on $F_e$ is given by conjugation, and, setting $t_i:= h_{i-1}h_{i}^{-1}$, for $1\leq i \leq 3$ and $h=(h_0,\ldots,h_3)\in H$, we obtain

\[ h\cdot 
\begin{pmatrix}
	0 & 0 & 0 & 0 \\
	y_{11} & 0 & 0 & 0 \\
	y_{12} & y_{22} & 0 & 0 \\
	y_{13} & y_{23} & y_{33} & 0 \\
\end{pmatrix} = 
\begin{pmatrix}
	0 & 0 & 0 & 0 \\
	t_1 y_{11} & 0 & 0 & 0 \\
	t_1t_2 y_{12} & t_2y_{22} & 0 & 0 \\
	t_1t_2t_3 y_{13} & t_2t_3 y_{23} & t_3 y_{33} & 0 \\
\end{pmatrix}
\]
Therefore, the weight map $\Mo(T)\to \Mo(H)$ is represented by the weight matrix 
\[
\begin{pmatrix} 
	1 & 0 & 0  & 1 & 0 & 1 \\
	0 & 1 & 0 & 1 & 1 & 1 \\
	0 & 0 & 1 & 0 & 1 & 1 \\
\end{pmatrix}
\]
and dually the action is represented by the inclusion $\No(H)\hookrightarrow \No(T)$ given by the transpose of the weight matrix. Let $\pi: \No(T)\to \No(T/H)$ be its cokernel,  
that under an appropriate identification of $\No(T/H)$ with $\Z^3$ is represented by the matrix:
\[
M(\pi)=\begin{pmatrix} 
-1 & -1 & 0 & 1 & 0 & 0 \\
 0 &-1 & -1 & 0 & 1 & 0 \\
-1 & -1 & -1 & 0 & 0 & 1 \\
\end{pmatrix}
\]

 We have computed the quotient fan $\Sigma$ of $\Sigma(\delta)$ (see  Section \ref{ssec:prelimtorus}) aided by a {\tt SageMath} script, and the outcome is the following:

\begin{proposition}\label{prop:combquotisproj}
	The combinatorial quotient $X_e$ is a smooth projective toric variety $X(\Sigma)$ of dimension $3$ and Picard number $4$, whose fan has the following rays in $\No(T/H)_\R=\R^3$: 
	\[
	\begin{array}{c}
		\rho_0=(-1,0,-1),\quad \rho_1 = (-1,-1,-1), \quad \rho_2=(0,-1,-1),\\[3pt] \rho_3=(1,0,0),\quad \rho_4=(0,1,0), \quad \rho_5=(0,0,1), \quad \rho_6=(0,0,-1).
	\end{array}
	\]
\end{proposition}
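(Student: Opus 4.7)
The plan is to compute the quotient fan $\Sigma := \pi(\Sigma(\delta))$ directly from the definition recalled in Section \ref{ssec:prelimtorus} and the projection matrix $M(\pi)$, and then to read off the stated properties. The dimension assertion is immediate: $\Sigma$ lives in $\No(T/H)_\R \cong \R^3$ since $\dim T - \dim H = 6 - 3$. Once $\Sigma$ is known, the Picard number equals $|\Sigma(1)| - \dim X_e = 7 - 3 = 4$, and smoothness reduces to verifying that the primitive generators of each maximal cone form a $\Z$-basis of $\No(T/H)$, i.e., have determinant $\pm 1$.

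The first substantial step is to identify all rays of $\Sigma$. Applying $M(\pi)$ to the six primitive generators $R_{11},R_{22},R_{33},R_{12},R_{23},R_{13}$ of $\delta$ produces exactly $\rho_0,\rho_1,\rho_2,\rho_3,\rho_4,\rho_5$, so these six vectors are rays of $\Sigma$. The seventh ray $\rho_6 = (0,0,-1)$ is not the image of any ray, and must therefore arise from the intersection of two relevant cones in the sense of Definition \ref{def:rel}. A direct check locates it: the $2$-dimensional cones $\pi(\mathrm{Cone}(R_{11},R_{12})) = \mathrm{Cone}(\rho_0,\rho_3)$ (lying in the plane $y=0$) and $\pi(\mathrm{Cone}(R_{33},R_{23})) = \mathrm{Cone}(\rho_2,\rho_4)$ (lying in the plane $x=0$) both contain $(0,0,-1)$ in their relative interiors, and their intersection is precisely $\mathrm{Cone}(\rho_6)$. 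By the defining property of the quotient fan, $\rho_6$ is then a ray of $\Sigma$.

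Having identified all rays, I would enumerate the maximal cones of $\Sigma$. Since $\delta$ has $2^6 = 64$ faces, inspecting all pairs of projected cones to catch every relevant phenomenon of the type producing $\rho_6$ is a finite but delicate combinatorial task; I would carry it out with \texttt{SageMath}, as the authors indicate, by computing $\bigcap_{\sigma \in \Sigma(\delta),\, x \in \pi(\sigma)}\pi(\sigma)$ on a sufficiently fine set of test points $x \in \No(T/H)_\R$ to recover each maximal cone. Projectivity would then follow either by exhibiting a strictly convex piecewise-linear support function on $\Sigma$ or by presenting $\Sigma$ as the normal fan of a polytope; smoothness and the Picard number are read off mechanically from the explicit list of maximal cones. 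The main obstacle is precisely this combinatorial enumeration: one must verify that no further rays or refinements beyond $\rho_0,\dots,\rho_6$ appear, and that the resulting collection of top-dimensional cones truly covers $\No(T/H)_\R$ and has the claimed properties.
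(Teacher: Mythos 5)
Your proposal is correct and follows essentially the same route as the paper: the six rays $\rho_0,\dots,\rho_5$ are read off as the columns of $M(\pi)$, the extra ray $\rho_6=(0,0,-1)$ is identified as the intersection $\pi(\langle R_{11},R_{12}\rangle)\cap\pi(\langle R_{33},R_{23}\rangle)$ (exactly the paper's observation), and the full enumeration of the quotient fan is delegated to a \texttt{SageMath} computation, which is precisely how the paper establishes the statement. The added remarks on reading off smoothness, projectivity and the Picard number from the explicit fan are standard and consistent with what the paper leaves implicit.
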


In other words, the rays of the quotient fan are the columns of the matrix $M(\pi)$, together with $(0,0,-1)$, which can be obtained  
by considering the intersection of the images of the cones $\langle R_{11},R_{12}\rangle$ and $\langle R_{33}, R_{23}\rangle$.

\subsection{Geometric description of $X_e$}\label{ssec:geomdescr} 

We will present here a geometric construction of the variety $X_e$, as a birational modification of the projective space $\P^3$, as well as a description of its $T/H$-invariant prime divisors.

\begin{construction}\label{const:1}

Consider the $3$-dimensional projective space $\P^3$ with coordinates $[y_0:y_1:y_2:y_3]$, endowed with the standard action of $T/H=(\C^*)^3$, $(t_1,t_2,t_3)[y_0:y_1:y_2:y_3]=[y_0:t_1y_1:t_2y_2:t_3y_3]$;  let $P$ be the point $[0:0:0:1]$ and let $\ell_1,\ell_2$ be the lines $\ell_1:y_0=y_2=0$, $\ell_2:y_0=y_1=0$.
We may construct $X_e$ starting from $\P^3$ by the following sequence of ($T/H$-equivariant) smooth blowups along rational curves:
\begin{itemize}
\item[($p_a$)] blow up $\P^3$ along the line $\ell_1$;
\item[($p_b$)] blow up the resulting variety along the strict transform of the line $\ell_2$;
\item[($p_d$)] blow up the resulting variety along the strict transform of $(p_a)^{-1}(P)$.
\end{itemize}
In the sequel, we will denote by $p$ the composition of these blowups:
\begin{equation}\label{eq:projP3}
p:=p_d\circ p_b\circ p_a:X\to \P^3.
\end{equation}
\end{construction}

\subsubsection*{Torus invariant divisors on the variety $X_e$}

Let us introduce the following notation for some $T/H$-invariant prime divisors in $X_e$, defined by rays of $\pi(\Sigma(\delta))$:

\begin{table}[h!!]\label{tab:divxe}
\begingroup
\renewcommand*{\arraystretch}{1.1}
\begin{tabular}{|c||c|l|}
\hline
Divisor&Ray&Description\\\hline\hline
$A_1$&$\rho_0$&proper transform of the exceptional divisor of $p_a$\\\hline
$B_2$&$\rho_2$&proper transform of the exceptional divisor of $p_b$\\\hline
$C_{02}$&$\rho_1$&proper transform of the plane $y_0=0$\\\hline
$D_{12}$&$\rho_6$&exceptional divisor of $p_d$\\\hline
\end{tabular}\par\medskip
\caption{$T/H$-invariant prime divisors in $X_e$ generating $\Pic(X_e)$.}
\vspace{-20pt}
\endgroup
\end{table}

\begin{remark}\label{rem:picXe}
Besides these $4$ divisors, we have $3$ more $T/H$-invariant divisors in $X_e$, that we denote by $E,F,G$, corresponding to the rays $\rho_4,\rho_3,\rho_5$, respectively. One may then check that  we have the following linear equivalence relations in $X_e$:
$$
E-B_2-C_{02}\equiv F-A_1-C_{02}\equiv G-A_1-B_2-C_{02}-D_{12}\equiv 0.
$$
In particular, the Picard group of $X_e$ is generated by the classes of 
$A_1,B_2,C_{02},D_{12}$.
\end{remark}

The divisor $5C_{02}+3A_1+3B_2+2D_{12}$ is ample and, using the functions presented in \cite{BOSSage}, we have represented in Figure \ref{fig:ampleXe} the corresponding ample polytope.  
It is a $3$-dimensional lattice polytope with $f$-vector equal to $(10,15,7)$.  Following Construction \ref{const:1}, one may easily obtain it from a tetrahedron by means of three successive truncations of edges.

\begin{figure}[h!!]
	\centering
	\resizebox{0.43\textwidth}{!}{%
		\begin{tikzpicture}
			\node at (0, 0) {\includegraphics[width=0.5\textwidth]{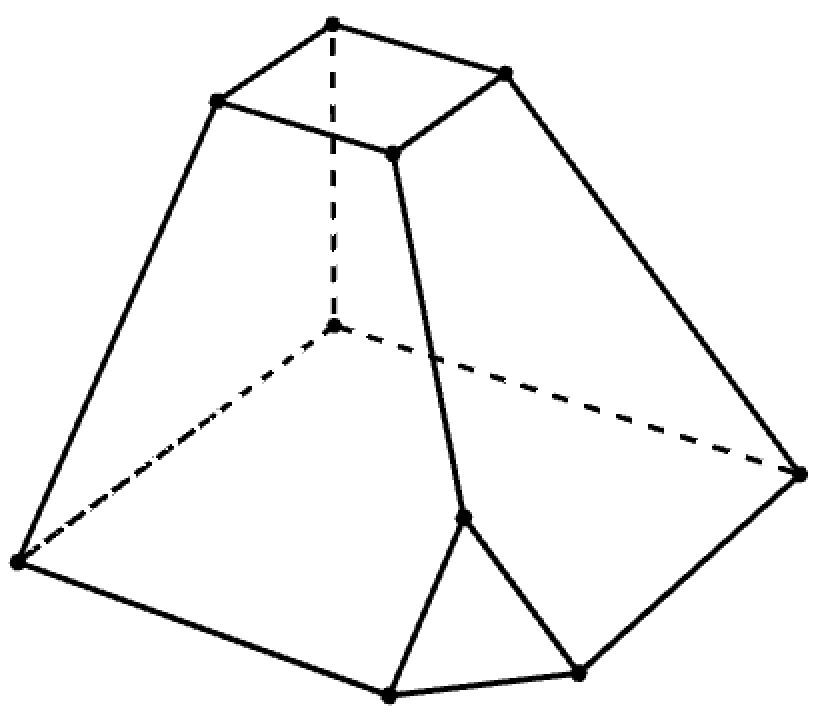}};
			\node[rotate=-10] at (-1.1, 0.3) {$A_1$};
			\node[rotate=30] at (0.9, 0.5) {$B_2$};
			\node[rotate=5] at (0.5, -2.1) {$C_{02}$};
			\node[rotate=-19] at (-0.35, 2.1) {$D_{12}$};
		\end{tikzpicture}  
	}
	\caption{An ample polytope of $X_e$ and some of its boundary divisors.} \label{fig:ampleXe}
\end{figure}

\subsection{Boundary divisors and fundamental subtori} \label{ssec:boundarydiv}

The notation we have introduced in Table \ref{tab:divxe} for some $T/H$-invariant divisors is meant to resemble the one we have used in Section \ref{ssec:fundasub} for some particular fundamental subtori. Let us now explain, case by case, the relation between partitions of types $A,B,C,D$ and the prime divisors of types $A,B,C,D$, above. The key idea we will use here is the following:

\begin{remark}\label{rem:rel}
Let us consider two faces $\delta_1,\delta_2$ of the orthant $\delta$, with associated $T$-orbits $O(\delta_1),O(\delta_2)\subset\fn$, and assume that 
$\dim(\pi(\delta_i))=\dim(\delta_i)$, $i=1,2$. This assumption means that the action of $H$ on $O(\delta_i)$ has orbits of dimension three, and the corresponding (geometric) quotient of $O(\delta_i)$ by $H$ is the $T/H$-orbit $O(\pi(\delta_i))$, for $i=1,2$. If $\delta_1$ is relevant and $\delta_2$ is its relevant companion (see Definition \ref {def:rel}), then the sum $\delta_1+\delta_2$ has dimension larger than $\dim(\pi(\delta_1+\delta_2))$, and so the $H$-orbits in $O(\delta_1+\delta_2)$ have dimension smaller than three. Since $O(\delta_1+\delta_2)$ is contained in $\ol{O(\delta_1)}\cap \ol{O(\delta_2)}$, it then follows that each element in $O(\pi(\delta_1)\cap \pi(\delta_2))$ corresponds to an $H$-orbit in $O(\delta_1)$ and an $H$-orbit in $O(\delta_2)$, with a common border in an $H$-orbit contained in $O(\delta_1+\delta_2)$.
\end{remark}

\medskip

\noindent{\bf (Type $\mathbf {A_1}$).} Let us consider the $\C^*$-actions associated to the partitions $A_1$ and $B_1$ on the nilpotent Lie algebra $\fn$, and represent their weights in the following lower triangular matrices:
\[
\begin{pmatrix}
0&0&0&0\\
\fbox{\,1\,}&0&0&0\\
\fbox{\,0\,}&\fbox{-1}&0&0\\
\fbox{\,0\,}&\fbox{-1}&\fbox{\,0\,}&0
\end{pmatrix}, \quad
\begin{pmatrix}
0&0&0&0\\
\fbox{-1}&0&0&0\\
\fbox{\,0\,}&\fbox{\,1\,}&0&0\\
\fbox{\,0\,}&\fbox{\,1\,}&\fbox{\,0\,}&0
\end{pmatrix}
\] 
One can then see that they have the same fixed point component passing by $eB$, that we denote by $Y_e$, and that 
$F_{A_1}^+(Y_e)\cap F_e$, $F_{B_1}^+(Y_e)\cap F_e$ are the linear subspaces corresponding to the cones $\delta(A_1),\delta(B_1)\preccurlyeq \delta$ generated by $R_{22}$, $R_{23}$, and by $R_{11}$, respectively. Then we have that:
$$
\pi(\delta(A_1))=\langle \rho_1,\rho_4\rangle,\quad \pi(\delta(B_1))=\langle \rho_0\rangle,
$$
and in particular we see that 
$$\pi(\delta(B_1))\subset \pi(\delta(A_1)),$$
but it is not one of its faces. We then conclude that $\delta(A_1)$ is relevant, with relevant companion $\delta(B_1)$; then the prime divisor in $X_e$ corresponding to the ray $\pi(\delta(A_1))$ corresponds to a family of (intersections with $F_e$ of) $H$-invariant cycles having an irreducible component in $F_{A_1}^+(Y_e)$, and one in $F_{B_1}^+(Y_e)$ (see Remark \ref{rem:rel}). Note finally that the divisor associated to $\pi(\delta(A_1))\cap \pi(\delta(B_1))=\langle \rho_0\rangle$ is the one we have denoted by $A_1\subset X_e$. 

\medskip

\noindent{\bf (Type $\mathbf {B_2}$).} An argument analogous to the one above shows that the fundamental actions of type $B_2,A_2$ have the same fixed point component passing by $eB$ (abusing notation, we denote it again by $Y_e$), that the corresponding fundamental invariant subsets satisfy:
$$
\pi(\delta(B_2))=\langle \rho_1,\rho_3\rangle, \quad \pi(\delta(A_2))=\langle \rho_2\rangle, 
$$
and that 
$$\pi(\delta(A_2))\subset \pi(\delta(B_2)),\quad\mbox{but }\pi(\delta(A_2))\not\preccurlyeq \pi(\delta(B_2)),$$
that is, $\delta(B_2)$ is relevant, with relevant companion $\delta(A_2)$. 
The divisor in $X_e$ associated to the ray $\pi(\delta(B_2))\cap \pi(\delta(A_2))=\langle \rho_2\rangle$, that we denoted by $B_2$, corresponds to a family of (intersections with $F_e$ of) $H$-invariant cycles having an irreducible component in $F_{B_2}^+(Y_e)$, and one in $F_{A_2}^+(Y_e)$.

\begin{figure}[h!!]
\includegraphics[width=5cm]{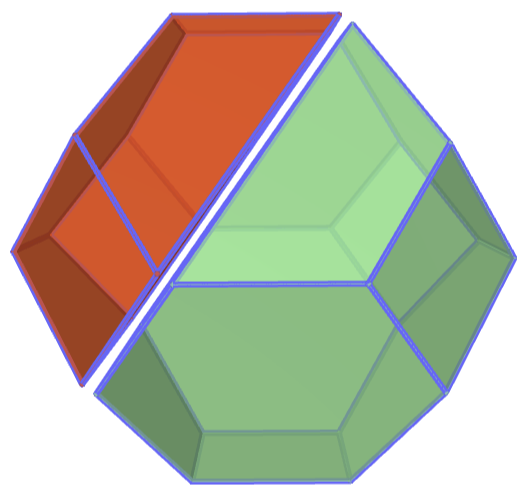} \qquad \includegraphics[width=5cm]{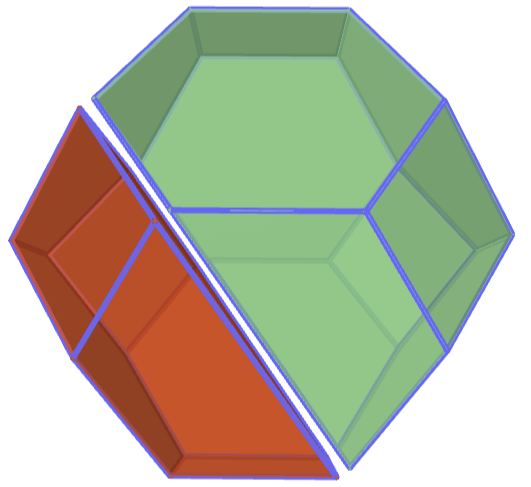}
\caption{Subdivisions of $P$ associated to the divisors $A_1,B_2$.\label{fig:TypeAB}}
\end{figure}

\medskip

\noindent{\bf (Type $\mathbf {C_{02}}$).} In this case, we consider the partitions $C_{02}$, $C_{13}$, that have the same component passing by $eB$, denoted again by $Y_e$. The corresponding fundamental invariant subsets satisfy:
$$
\pi(\delta(C_{02}))=\langle \rho_0,\rho_2,\rho_5\rangle, \quad \pi(\delta(C_{13}))=\langle \rho_1\rangle, 
$$
and so 
$$\pi(\delta(C_{13}))\subset \pi(\delta(C_{02})), \quad\mbox{and }\pi(\delta(C_{13}))\not\preccurlyeq\pi(\delta(C_{02})).$$
The divisor in $X_e$ associated to the ray $\langle \rho_1\rangle$, that we denoted by $C_{02}\subset X_e$  corresponds to a family of (intersections with $F_e$ of) $H$-invariant cycles having an irreducible component in $F_{C_{02}}^+(Y_e)$, and one in $F_{C_{13}}^+(Y_e)$.

\medskip

\noindent{\bf (Type $\mathbf {D_{12}}$).} This last case is slightly different from the others, since it involves the action of the subtorus of rank two of $H$ associated to the partition $D_{12}$, that has rank two; we will consider it together with the partitions $C_{12}$ and $C_{03}$. The fixed point component $Y_e$ passing by $eB$ of the action associated to $D_{12}$ is a $\P^1$, while the action associated to $C_{12}$ and $C_{03}$ have the same fixed point component $Y'_e$, isomorphic to $\P^1\times \P^1$ and containing $Y_e$. Moreover, the orbit-cone correspondence reads here as:
$$\begin{array}{c}
F_{D_{12}}^+(Y_e)\cap F_e\leftrightarrow\langle R_{12},R_{22},R_{23}\rangle, \\[3pt] F_{C_{12}}^+(Y'_e)\cap F_e\leftrightarrow\langle R_{23},R_{33}\rangle,\quad F_{C_{03}}^+(Y'_e)\cap F_e \leftrightarrow\langle R_{11},R_{12}\rangle.
\end{array}
$$ 
Projecting this cones via $\pi$ we get three cones
$$
\pi(\delta(D_{12}))=\langle \rho_3,\rho_4,\rho_1\rangle,\quad 
\pi(\delta(C_{12}))=\langle \rho_2,\rho_4\rangle,\quad
\pi(\delta(C_{03}))=\langle \rho_0,\rho_3\rangle.
$$
intersecting effectively in the ray generated by $\rho_6$. We get that $\delta(D_{12})$ is relevant, and that $\delta(C_{12})$, $\delta(C_{03})$ are two relevant companions of it. The prime divisor defined by the ray generated by $\rho_6$, that we denoted by $D_{12}\subset X_e$, corresponds then to a family of (intersections with $F_e$ of) $H$-invariant cycles having an irreducible component in $F_{D{12}}^+(Y_e)$,  $F_{C_{12}}^+(Y_e)$, and  $F_{C_{03}}^+(Y_e)$. Note that $\langle\rho_6\rangle$ can be written simply as $\pi(\delta(C_{12}))\cap \pi(\delta(C_{03}))$, so the divisor $D_{12}$ is completely determined by the two sets $F_{C_{12}}^+(Y_e)$,  $F_{C_{03}}^+(Y_e)$; note also that $\langle\rho_6\rangle$ is the only ray of the quotient fan $\pi(\Sigma(\delta))$ that is not the projection of a ray of $\delta$.

\begin{figure}[h!!]
\includegraphics[width=5cm]{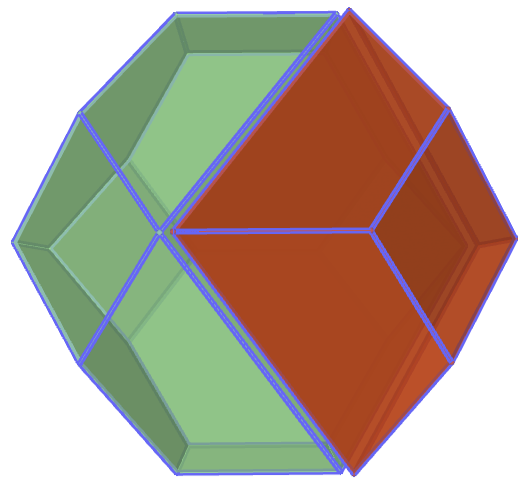} \qquad \includegraphics[width=5cm]{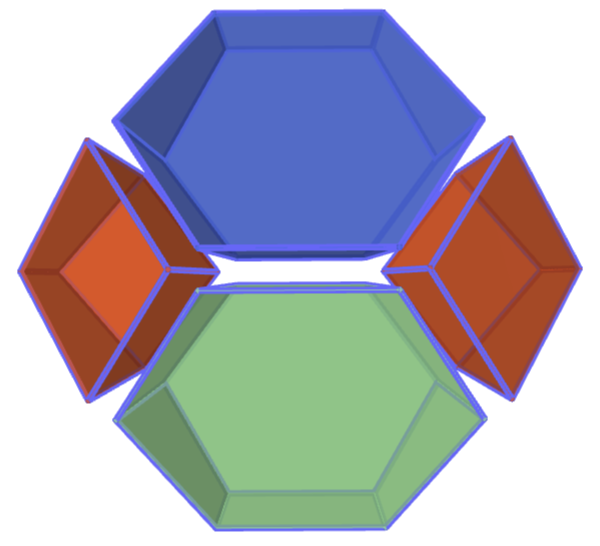}
\caption{Subdivisions of $P$ associated to the divisors $C_{02},D_{12}$.\label{fig:TypeCD}}
\end{figure}

\subsection{Description of $X_e$ as an inverse limit of GIT quotients}\label{ssec:combislimit} 

We have defined the variety $X_e$ as a toric variety associated to the quotient fan $\Sigma$ of the fan $\Sigma(\delta)$. We will now relate it to some GIT quotients of $F_e$ by the action of $H$.
Let us start by introducing the following notation  for the ray generators  of the orthant $\delta$: 
$$E_0=R_{11},\quad E_1=R_{22},\quad E_2=R_{33},\quad E_3=R_{21},\quad E_4=R_{32},\quad E_5=R_{31},$$ 
and the following notation for the positive dimensional faces of $\delta$:
$$
\sigma_{(i_1\dots i_k)}:=\R_{\geq 0}(E_{i_1},\dots, E_{i_k}), \quad \mbox{for}\quad  0\leq i_1<\dots<i_k\leq 5.
$$
Given a face $\sigma_{(i_1\dots i_k)}$ of $\delta$, we denote by $O_{(i_1\dots i_k)}\subset F_e$ the corresponding $T$-orbit, and by $Z_{(i_1\dots i_k)}$ its closure, that is:
$$
Z_{(i_1\dots i_k)} = \bigcup_{\{j_1,\dots, j_r\}\supset\{i_1,\dots,i_k\}} O_{(j_1\dots j_r)}.
$$
We consider the following subfans of $\Sigma(\delta)$:
$$
\begin{array}{l}
\Sigma'_0=\left\{\tau\in\Sigma(\delta)\ |\ \sigma_{(03)},\sigma_{(13)},\sigma_{(14)},\sigma_{(24)},\sigma_{(025)}\not\preccurlyeq \tau\right\},\\[3pt]
\Sigma'_+=\left\{\tau\in\Sigma(\delta)\ |\ \sigma_{(03)},\sigma_{(13)},\sigma_{(14)},\sigma_{(025)},\sigma_{(245)}\not\preccurlyeq \tau\right\},\\[3pt]
\Sigma'_-=\left\{\tau\in\Sigma(\delta)\ |\ \sigma_{(13)},\sigma_{(14)},\sigma_{(24)},\sigma_{(025)},\sigma_{(035)}\not\preccurlyeq \tau\right\}.\\[3pt]
\end{array}
$$
Let us denote by $U_0,U_+,U_-\subset F_e$ the corresponding $T$-invariant open subsets of $F_e$.
It is a straightforward computation to check the following:
\begin{itemize}[leftmargin=\xx pt]
\item The fans $\Sigma'_+,\Sigma'_-,\Sigma'_0$ project bijectively via the quotient map $\pi$ to fans  $\Sigma_+,\Sigma_-,\Sigma_0$; equivalently, we have three toric morphisms: $$U_+\to U_+/H=X(\Sigma_+),\quad U_-\to U_-/H=X(\Sigma_-),\quad U_0\to U_0/H=X(\Sigma_0).$$
\item Since every cone of $\Sigma'_+$ (resp. $\Sigma'_-$) is contained in $\Sigma'_0$, we get $T$-equivariant morphisms $U_+\lra U_0\longleftarrow U_-$ 
that descend to $T/H$-equivariant morphisms:
$$
\xymatrix{X(\Sigma_+)\ar[r]&X(\Sigma_0)&X(\Sigma_-)\ar[l]}
$$
 
\item The difference between these varieties is that the closed sets $\pi(Z_{(24)})\subset X(\Sigma_+)$, $\pi(Z_{(03)})\subset X(\Sigma_-)$ are contracted  to the point $\pi(Z_{(0234)})$ in $X(\Sigma_0)$. The common refinement of $\Sigma_+$, $\Sigma_-$, whose associated toric variety resolves the induced birational map $X(\Sigma_+)\dashrightarrow X(\Sigma_-)$, is the quotient fan $\Sigma$. One may check 
that the map $X(\Sigma_+)\dashrightarrow X(\Sigma_-)$ is locally the standard Atiyah flip (cf. \cite[Example~11.1.12]{CLS}, \cite[\S~1.3]{ReidFlip}),  
and that we have a Cartesian square: 
$$
\xymatrix@C=10pt@R=15pt{&X(\Sigma)\ar[rd]\ar[ld]&\\X(\Sigma_+)\ar[rd]&&X(\Sigma_-)\ar[ld]\\&X(\Sigma_0)&
}
$$
\item The exceptional locus of the two contractions $X(\Sigma)\to X(\Sigma_\pm)$ is precisely the divisor $D_{12}$, which is isomorphic to $\P^1\times \P^1$.
\end{itemize}
Figure \ref{fig:flip}  represents moment polytopes of the above toric varieties. 
\begin{figure}[ht]
  \centering
  \resizebox{0.75\textwidth}{!}{%
\begin{tikzpicture}
  \node at (0, 2.7) (1) {\includegraphics[width=0.25\textwidth]{quot_TO}};
  \node at (-4.5, 0) (2) {\includegraphics[width=0.22\textwidth]{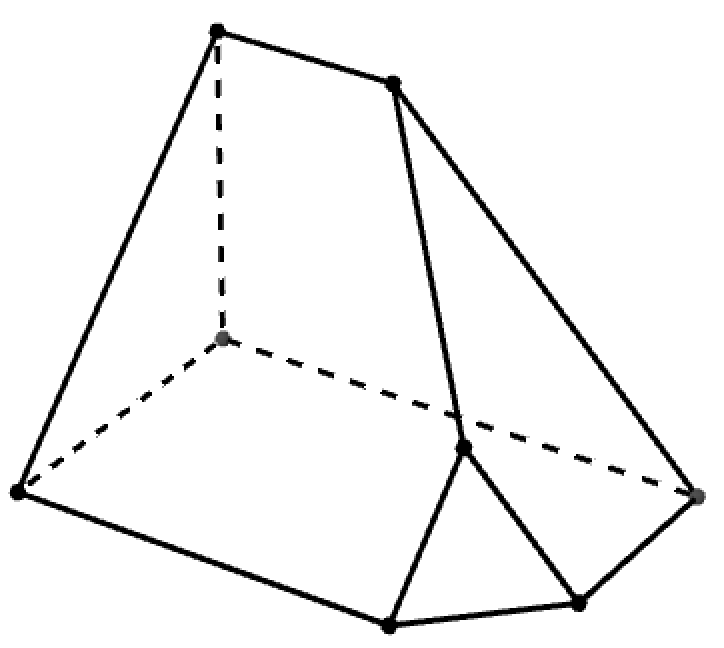}};
  \node at (4.5, 0) (3) {\includegraphics[width=0.22\textwidth]{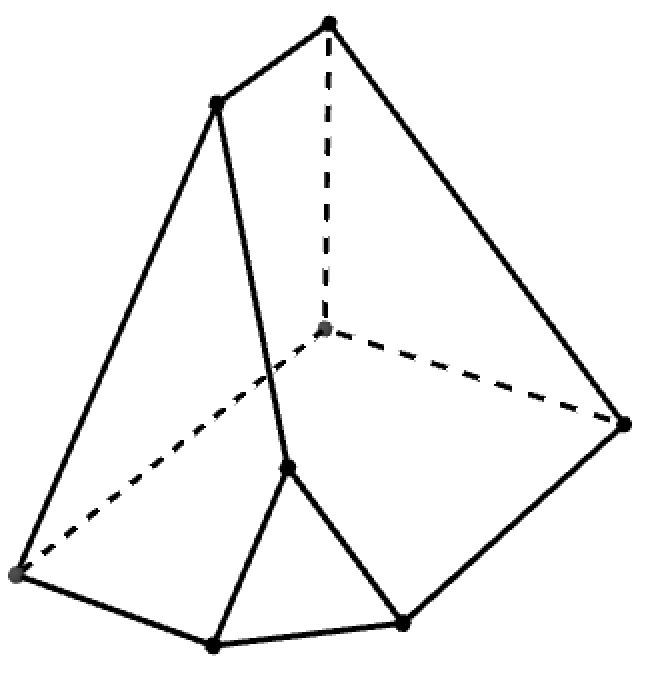}};
  \node at (0, -2.7) (4) {\includegraphics[width=0.20\textwidth]{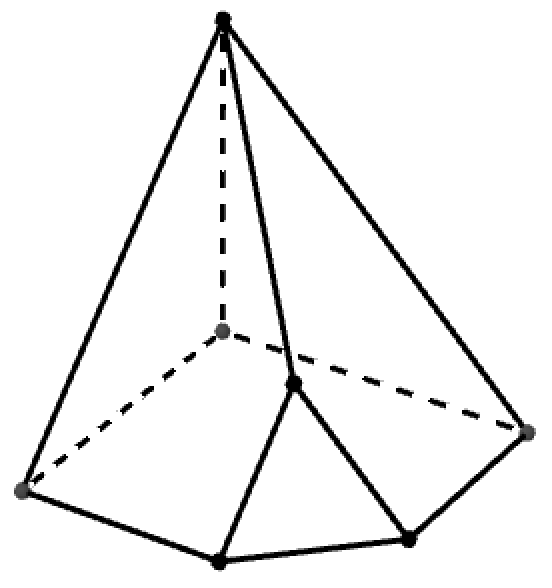}};
   \draw[->] (1) -- node {} (2);
   \draw[->] (1) -- node {} (3);
   \draw[->] (2) -- node {} (4);
   \draw[->] (3) -- node {} (4);
\end{tikzpicture}  
}
\caption{The toric quotient of $F_e$ as inverse limit of three GIT quotients of $F$ (two of them geometric).\label{fig:flip}}
\end{figure}

\begin{remark}\label{rem:A1B2D12} 
Notice that, with the notation of Section \ref{ssec:geomdescr}, the variety $X(\Sigma_+)$ is the blowup of $\P^3$ along the images of $B_2$, and, subsequently, $A_1$, while  $X(\Sigma_-)$ is obtained by blowing up the image of $A_1$, and then the image of $B_2$. The resolution $X(\Sigma)$ of the induced birational map from $X(\Sigma_+)$ to $X(\Sigma_-)$ is then obtained by blowing up the image of $D_{12}$ in the resulting varieties. On the other hand, the variety $X(\Sigma_0)$ can be obtained by blowing up $\P^3$ along the reducible conic  defined as the union of the images of $A_1$ of $B_2$, and $X(\Sigma)$ can be obtained out of $X(\Sigma_0)$ by blowing up its singular point. Finally, we note that a standard computation provides the nef cone of $X(\Sigma)$: it is simplicial, of dimension $4$, and its extremal rays correspond to two contractions to $\P^1$, a contraction to the quadric cone, and the contraction to $\P^3$ upon which we have constructed $X(\Sigma)$.
\end{remark}

\begin{lemma}
	The subsets $U_+,U_-$ (resp. $U_0$) are nonempty open subsets of stable (resp. semistable) points of $F$ with respect to  linearizations of the $H$-action on $F$.
\end{lemma}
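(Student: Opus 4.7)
The plan is to exhibit three characters $u_+, u_-, u_0 \in \Mo(H)_\R$ lying in the polytope of sections $\Gamma = P$ such that the GIT stable locus $F^{st}_{u_\pm}$ contains $U_\pm$ and the GIT semistable locus $F^{ss}_{u_0}$ contains $U_0$. Nonemptiness of the three subsets is immediate, since each contains the dense $T$-orbit of $F_e$.

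The principal tool is the moment-polytope version of the Hilbert--Mumford criterion applied to the $H$-action on $F$: a point $x \in F$ is $u$-semistable (resp.\ $u$-stable) for the $u$-shifted linearization of $L$ if and only if $u$ lies in (resp.\ in the relative interior of) the moment polytope $\mu(\overline{Hx}) \subset \Mo(H)_\R$, defined as the convex hull of the weights $\mu^H_L(\sigma B)$ over fixed points $\sigma B \in \overline{Hx}$, computed via Lemma~\ref{lem:Bruhat}. For $x \in F_e$ lying in the $T$-orbit $O_\tau$ with $\tau$ a face of $\delta$, the closure $\overline{Hx}$ meets $F^H$ in a set of points depending only on $\tau$, so the polytope $\mu(\overline{Hx})$ is a subpolytope of $P$ that can be computed combinatorially.

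I would then identify $u_+, u_-, u_0$ as follows. Applying the variation of GIT framework (cf.\ \cite{DolgachevHu}) together with the toric GIT description of Kapranov--Sturmfels--Zelevinsky \cite{KSZ} to the affine chart $F_e$, the polytope $\Gamma$ decomposes into finitely many chambers on which the $H$-GIT semistable locus is constant. By construction, the three subfans $\Sigma'_+, \Sigma'_-, \Sigma'_0$ of $\Sigma(\delta)$ are precisely those describing the toric $(F_e)^{ss}_u$ for a character $u$ in two adjacent chambers and on the wall between them (exchanging the $T$-orbits $O_{(03)}$ and $O_{(24)}$ across the wall, matching the Atiyah flip description in the preceding discussion). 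Choosing $u_+, u_-$ in the interior of the two chambers and $u_0$ on the separating wall, the combinatorial description above yields immediately that $u_\pm$ lies in the interior of $\mu(\overline{Hx})$ for every $x \in U_\pm$ (giving stability) and that $u_0$ lies in $\mu(\overline{Hx})$ for every $x \in U_0$ (giving semistability).

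The main technical point is to ensure that the characters constructed from toric GIT on $F_e$ correctly encode the global GIT on the non-toric variety $F$. This reduces to a finite moment-polytope check, since $F$ is covered by the $W$-translates $F_\sigma$ of $F_e$, and orbit closures meeting $F_e$ are controlled by the quotient fan $\Sigma$ of Proposition~\ref{prop:combquotisproj}. Concretely, for each maximal cone $\tau$ of $\Sigma'_+$ (resp.\ $\Sigma'_-$, $\Sigma'_0$) one verifies that the finite set of $H$-fixed points in $\overline{O_\tau}$ spans a subpolytope of $P$ containing $u_+$ in its interior (resp.\ $u_-$ in its interior, $u_0$ in its boundary); this check is combinatorial and can be carried out with the $\mathtt{SageMath}$ scripts already used for the computation of $\Sigma$.
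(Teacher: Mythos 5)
Your strategy---exhibit explicit characters $u_+,u_-,u_0$ and verify the Hilbert--Mumford criterion pointwise on $U_\pm,U_0$---is genuinely different from the paper's. The paper invokes Mumford's converse theorems \cite[Converse~1.12,~1.13]{MFK}: since $U_\pm\to X(\Sigma_\pm)$ are geometric quotients and $U_0\to X(\Sigma_0)$ is a good quotient, and these quotient morphisms are affine (because $\Sigma'_\bullet$ projects bijectively onto $\Sigma_\bullet$, so the preimage of each affine toric chart is affine), the three open sets automatically lie in the stable, resp.\ semistable, locus for some linearization. That route requires no computation of weight polytopes at all, which is precisely what makes it short.

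The gap in your argument is the claim that, for $x$ in the $T$-orbit $O_\tau\subset F_e$, the set $\overline{Hx}\cap F^H$---and hence the weight polytope $\mu(\overline{Hx})$---depends only on $\tau$. This is false: by Proposition~\ref{prop:momentgen} and Notation~\ref{notn:minors}, whether $\sigma B\in\overline{Hx}$ is governed by the nonvanishing of the minors $\det(M^{\sigma}_k)$ of a matrix representing $x$, and these are not monomials in the toric coordinates of $F_e\simeq\fn$. Already on the dense $T$-orbit there are points (where a non-principal $2\times 2$ minor of $\exp(Y)$ vanishes although all coordinates $y_{ij}$ are nonzero) whose weight polytope is a proper subpolytope of $P$. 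Since $\overline{Hx}\subseteq\overline{Tx}$, the fixed points of the ambient $T$-orbit closure only bound $\mu(\overline{Hx})$ from \emph{above}, whereas stability of \emph{every} point of $U_\pm$ requires a uniform \emph{lower} bound on these polytopes, i.e.\ control of the worst-case degenerations; so your proposed finite check over the maximal cones of $\Sigma'_\pm$ does not suffice. Relatedly, the assertion that $\Sigma'_+,\Sigma'_-,\Sigma'_0$ ``are precisely those describing $(F_e)^{ss}_u$'' for suitable $u$ is essentially the statement to be proved, so the argument is circular at that point. A smaller issue: stability requires $u$ to lie in the topological interior of a full-dimensional weight polytope (this is where finiteness of stabilizers enters), not merely in its relative interior.
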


\begin{proof}
	Let us prove the result for $U_0$; the other cases are similar. Thanks to \cite[Converse~1.12,~1.13]{MFK}, it is enough to show that $\pi: U_0\to X(\Sigma_0)$ is an affine morphism. Since $\Sigma'_0$ projects bijectively to $\Sigma_0$, we have that, for any $\sigma\in\Sigma_0$, $\pi^{-1}(F_\sigma)=F_{\pi^{-1}(\sigma)}=\bigcup_{\tau\preccurlyeq \pi^{-1}(\sigma)} \cO(\tau)$ is an affine open subset of $U_0$.
\end{proof}

In particular $X(\Sigma_+)$, $X(\Sigma_-)$, $X(\Sigma_0)$ are GIT quotients of $F$, 
and $X_e$ is their inverse limit. Then the natural maps from ${\mathcal L}F$ (which is equal to $\CF$ by Proposition \ref{prop:BHR} ) to $X(\Sigma_+)$, $X(\Sigma_-)$, $X(\Sigma_0)$ factor via $X_e$, and we get a (clearly birational) morphism $\pi_e:X\to X_e$. 

The same arguments provide a similar description of the combinatorial quotient of $F_\sigma$ for every $\sigma\in W$, that we denote by $X_\sigma$, and of its boundary divisors $A_{\sigma(1)}$,  $B_{\sigma(2)}$, $C_{\sigma(0)\sigma(2)}$, $D_{\sigma(1)\sigma(2)}\subset X_\sigma$. Furthermore we get a birational morphism $\pi_\sigma: X\to X_\sigma$, for every $\sigma\in W$. Note that $X$ is, by definition, normal and all the $X_\sigma$'s are smooth, hence we conclude the following:

\begin{corollary}\label{cor:birmor}
For every $\sigma\in W$ we have birational contractions:
\[
\pi_\sigma:X\lra X_\sigma.
\]
\end{corollary}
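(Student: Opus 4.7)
The plan is to run, for every $\sigma \in W$, exactly the same argument used in Section \ref{ssec:combislimit} to produce $\pi_e: X \to X_e$. Specifically, I would use the twisted isomorphism $\sigma: F_e \to F_\sigma$ of (\ref{eq:bir}) to transport the three subfans $\Sigma'_0, \Sigma'_\pm \subset \Sigma(\delta)$ into subfans of the fan of $F_\sigma$; the corresponding $T$-invariant open subsets $U_0^\sigma, U_\pm^\sigma \subset F_\sigma$ are again open sets of (semi)stable points of $F$ for appropriate linearizations of $L$, by the same affineness argument used in the unnamed lemma at the end of Section \ref{ssec:combislimit}. This realizes $X_\sigma$ as the inverse limit of three GIT quotients of $F$.

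With this in place, Proposition \ref{prop:BHR} identifies $X$ with the normalized limit quotient $\mathcal{L}F$, and the universal property recorded in Remark \ref{rem:UniversalPropertyLimit}, applied to the compatible collection of birational morphisms from $X$ to each of the three GIT quotients above, yields a canonical morphism $\pi_\sigma: X \to X_\sigma$ factoring all of them. Birationality is immediate: by Proposition \ref{prop:momentgen} and Corollary \ref{cor:momentgen}, the intersection $\bigcap_{\tau \in W} U_\tau \subset F$ is a dense $H$-invariant open subset whose geometric quotient $\mathcal{G}U$ embeds as a common dense open subset in both $X$ (by definition of the Chow quotient) and $X_\sigma$ (by construction of the combinatorial quotient), and $\pi_\sigma$ restricts to the identity on it.

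Finally, to get the contraction property, note that $X$ is normal by construction (as the normalization of $\overline{\mathcal{C}F}$) and $X_\sigma$ is smooth (by Proposition \ref{prop:combquotisproj} applied after the twist by $\sigma$). Hence $\pi_\sigma$ is a projective birational morphism to a normal projective variety, and its fibers are connected by Zariski's connectedness theorem (or directly by Stein factorization). I do not foresee a genuine obstacle: the only step that requires any care is verifying that the combinatorial description of $X_e$ carries over under the twist by $\sigma$, but this reduces to the observation that conjugation by $\sigma \in W$ maps the $H$-action on $F_e$ to the $H$-action on $F_\sigma$ via a linear relabelling of the weights in $\fn$, which is purely formal.
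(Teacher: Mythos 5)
Your proposal is correct and follows essentially the same route as the paper: Section \ref{ssec:combislimit} explicitly transports the inverse-limit description of $X_e$ to every $X_\sigma$ via the translations, obtains $\pi_\sigma$ from the fact that $X=\mathcal{L}F$ (Proposition \ref{prop:BHR}) maps compatibly to the three GIT quotients whose inverse limit is $X_\sigma$, and deduces the contraction property from normality of $X$ and smoothness of the $X_\sigma$'s. The only cosmetic slip is that the factorization through $X_\sigma$ uses the universal property of $X_\sigma$ as the inverse limit of those three quotients, rather than the universal property of $\mathcal{L}F$ stated in Remark \ref{rem:UniversalPropertyLimit}, but this is exactly how the paper argues as well.
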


Let us denote by $\ol{X}$ the normalization of the image of the product morphism:
\[
(\pi_\sigma)_{\sigma\in W}:X \lra \prod_{\sigma\in W}X_\sigma,
\]
and by $\ol{\pi}:X\to \ol{X}$ the induced morphism. We claim the following:

\begin{proposition}\label{prop:invlim}
The morphism $\ol{\pi}:X\to \ol{X}$ is an isomorphism.
\end{proposition}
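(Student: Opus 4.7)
The plan is to reduce the isomorphism claim to a quasi-finiteness statement via Zariski's Main Theorem, and then to establish quasi-finiteness by a cycle-separation argument based on the fact that the affine charts $F_\sigma$ form an $H$-invariant open cover of $F$.

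First, observe that $\bar{\pi}$ is proper (since $X$ is projective), surjective (by the very definition of $\bar{X}$ as a normalization of the image of $\Phi:=(\pi_\sigma)_{\sigma\in W}$ and the universal property of normalization applied to the normal variety $X$), and birational, since each $\pi_\sigma:X\to X_\sigma$ is birational by Corollary \ref{cor:birmor} (and $X$ has the same dimension $3$ as every $X_\sigma$). Both $X$ and $\bar{X}$ are normal. By Zariski's Main Theorem, to prove that a proper birational morphism between normal varieties is an isomorphism, it suffices to check that it is quasi-finite. Hence I reduce to proving that $\Phi:X\to\prod_{\sigma\in W}X_\sigma$ has finite fibers, or equivalently, is injective on closed points.

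A closed point $x\in X$ corresponds to an $H$-invariant cycle $Z_x\subset F$. By Section \ref{ssec:combislimit}, each $X_\sigma$ is an inverse limit of three GIT quotients of $F$ whose semistable loci $U_+^\sigma,U_-^\sigma,U_0^\sigma$ are $H$-invariant open subsets of $F_\sigma$. Via Proposition \ref{prop:BHR} and Remark \ref{rem:UniversalPropertyLimit}, the image $\pi_\sigma(x)$ is fully determined by the images of $x$ in these three GIT quotients, each of which records the closures of the $H$-orbits of generic points of $Z_x$ meeting the corresponding semistable locus. Since the Bruhat decomposition yields the open cover $F=\bigcup_{\sigma\in W} F_\sigma$, every irreducible component of $Z_x$ meets some chart $F_\sigma$, and, by the analysis of the combinatorial quotients carried out in Section \ref{sec:local}, it necessarily meets the semistable locus of one of the three GIT quotients used to build $X_\sigma$. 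Thus the collection $\{\pi_\sigma(x)\}_{\sigma\in W}$ determines all irreducible components of $Z_x$, and consequently $Z_x$ itself; so $\Phi$ is injective on points.

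The main obstacle is justifying rigorously that the $72$ GIT quotients involved in the constructions of the $X_\sigma$'s actually separate the cycles parametrized by $X$. The delicate point is that one must verify that agreement of the images $\pi_\sigma(x_1)=\pi_\sigma(x_2)$ for every $\sigma$ forces agreement not only of the generic orbit closures within each chart, but also of all boundary components of the cycles. This is precisely the content encoded in the fundamental-subtori/boundary-divisor correspondence established in Section \ref{ssec:boundarydiv}: the irreducible components of a degenerate $H$-invariant cycle are prescribed by the boundary divisors of $X_\sigma$ they hit, and these divisors correspond to the relevant cones of the quotient fan, each detected by some $F_\sigma$ among the $24$ charts covering $F$. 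Once this correspondence is in place, the injectivity of $\Phi$ follows, and the proof is complete.
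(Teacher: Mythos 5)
Your proposal is correct and follows essentially the same route as the paper: reduce to (quasi-)finiteness using birationality and normality of the target, then observe that since the charts $F_\sigma$ cover $F$, the contractions $\pi_\sigma$ — which record the intersection of an $H$-invariant cycle with $F_\sigma$ — jointly separate the cycles parametrized by $X$. The paper phrases the second step contrapositively (a contracted curve would consist of cycles with identical traces on every $F_\sigma$), but the underlying argument is the same, and your explicit appeal to the GIT-quotient description of the $X_\sigma$ only makes more visible the step the paper leaves implicit.
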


\begin{proof}
Since $\ol{\pi}$ is, by construction, birational, and $\ol{X}$ is normal, then it is enough to show that the map is finite. Assume, for the sake of contradiction, that it contracts an irreducible curve $C\subset X$. Equivalently, $C$ gets contracted by every $\pi_\sigma:X\to X_\sigma$, and so all the $H$-invariant cycles parametrized by points of $C$ have the same intersection with $F_\sigma$, for $\sigma\in W$. We get to a contradiction by noting that the $F_\sigma$'s form an open covering of  the flag variety $F$.
\end{proof}

\begin{remark}\label{rem:irredcycles}
A consequence of the above discussion is that the closed subset of the Chow quotient $$\bigcup_{\sigma\in W}\pi_{\sigma}^{-1}(A_{\sigma(1)}\cup B_{\sigma(2)}\cup C_{\sigma(0)\sigma(2)}\cup D_{\sigma(1)\sigma(2)})\subset X$$ contains all the reducible $H$-invariant cycles of the universal family parametrized by $X$, and the  the different ways in which an $H$-invariant cycle may break, locally around fixed points $\sigma B$, $\sigma\in W$  are the ones described in Section \ref{ssec:boundarydiv}.
\end{remark}

\section{The tile group}\label{sec:tilegp}
\label{sec:merging}

The intersections of the affine charts $F_\sigma\subset F$ induce birational maps among their quotients $X_\sigma$.
Composing these with the isomorphisms $X_\sigma\to X_e$ induced by the translations $\sigma^{-1}:F_\sigma\to F_e$ yields a group of birational automorphisms of $X_e$, which can be enlarged considering also the automorphism of $X_e$ induced by the anti-transposition $\tau$. Considering the contraction $p: X_e\to \P^3$ described in (\ref{eq:projP3}) we obtain a group of birational automorphisms of $\P^3$ -- called {\em tile group} -- that will play a pivotal role in the description of the Chow quotient $X$ of $F$. 
In this section we will study this group, describe its generators explicitly, and use it to define a set of boundary divisors in $X$, that parametrize reducible $W$-invariant cycles in $F$. 

\subsection{Definition of the tile group}\label{ssec:defFoctah}

The natural action of the normalizer $\No_G(H)$ on $F$ descends to an action of $W$ on $X$; 
abusing notation, the automorphism of $X$ determined by an element $\sigma\in W$ will be denoted by $\sigma:X\to X$. Moreover the automorphism of $F$ induced by the anti-transposition map $\tau$ (see Section \ref{ssec:prelimPGL}); preserves $H$, hence it descends to an automorphism $\tau$ of $X$.

By construction, these automorphisms preserve the set of contractions $\pi_w$; denoting by $\phi_\sigma:X_w\to X_{\sigma w}$, for every $w,\sigma\in W$, the morphism induced by the translation $\sigma:F_w\to F_{\sigma w}$, we have a commutative diagram:
\begin{equation}\label{eq:phiw1}
\xymatrix{X\ar[r]^{\sigma}\ar[d]_{\pi_{w}}&X\ar[d]^{\pi_{\sigma w}}\\ X_w\ar[r]_{\phi_\sigma}&X_{\sigma w}}
\end{equation}
In the case of the anti-transposition $\tau$,  if $w=r_{i_1}r_{i_2}\dots r_{i_k}$, then $\tau$ sends $\pi_w$ to $\pi_{w'}$, where $w'=r_{n+1-i_k}\dots r_{n+1-i_2} r_{n+1-i_1}$. In particular, it sends the contraction $\pi_e$ to itself. Note that the induced map $\phi_\tau:X_e\to X_e$  is an isomorphism such that $\phi_\tau=\phi_\tau^{-1}$, but not the identity.

\begin{definition}\label{def:birgroup}
For every $\sigma\in W$ we consider the birational map $\pi_{e}\circ (\pi_\sigma)^{-1}: X_\sigma\dashrightarrow X_e$, and define a birational automorphism $\sigma$ of $X_e$ as follows:
\[
\xymatrix@C=45pt{X_e\ar[r]_{\phi_{\sigma}} \ar@/^10pt/@{-->}[rr]^{\sigma:=\pi_e\circ (\pi_\sigma)^{-1} \circ \phi_{\sigma}}
&X_\sigma \ar@{-->}[r]_{\pi_{e}\circ (\pi_\sigma)^{-1}}
&X_e}
\]
In this way we define a homomorphism of groups $W\to \Bir(X_e)$, that we may extend it to $W\rtimes \Z/2\Z$ by sending $\tau$ to the automorphism $\phi_\tau$. In other words, for every $\sigma\in W\rtimes \Z/2\Z$, $\sigma\in\Bir(X_e)$ is defined so that:
\begin{equation}\label{eq:defsigma}
\pi_\sigma\circ (\pi_e)^{-1}=\phi_{\sigma}\circ \sigma^{-1}.
\end{equation} 
\end{definition}

Furthermore, we may use Construction \ref{const:1}, that presents $X_e$ as a birational modification of $\P^3$, to produce a homomorphism of groups:
$$W\rtimes \Z/2\Z\to \Bir(\P^3).$$
Abusing notation, the birational transformation of $\P^3$ associated with an element $\sigma\in W\rtimes \Z/2\Z$ will also be denoted by $\sigma:\P^3\dashrightarrow \P^3$. 

\begin{definition}\label{def:FOgroup}
The image of $W\rtimes \Z/2\Z$ into  $\Bir(\P^3)$ will be denoted by
\[
\FO\subset \Bir(\P^3),
\]
and called the {\it tile group} 
-- the reason for this name will be clear by the end of the section (see Figure \ref{fig:20}).\end{definition}

We will now compute a set of generators of the group, and study its behavior with respect to the divisors of type $A,B,C,D$ introduced in Section \ref{ssec:geomdescr}.

\subsection{Generators of the group}\label{ssec:F-octah}
 
In order to study the tile group it is  enough to compute its generators, which are the images of $r_1,r_2,r_3,\tau\in \FO$ into $\Bir(\P^3)$.

\begin{construction}\label{const:birmaps}
We take a general element $x$ of $\fn$, i.e., the class of a matrix 
$$
Y=\begin{pmatrix}
	0 & 0 & 0 & 0 \\
	1 & 0 & 0 & 0 \\
	y_{1} & 1 & 0 & 0 \\
	y_{3} & y_{2} & 1 & 0 \\
\end{pmatrix},
$$ 
which corresponds to the point $[1:y_1:y_2:y_3]\in\P^3$.
 
We first compute the matrix $r_i e^Y$ (resp. $w_0((e^Y)^\top)^{-1}w_0$ in the case of the anti-transposition $\tau$, see the last item of Section \ref{ssec:prelimPGL}), and its LU-decomposition.

We then define the matrix $M$ as the logarithm of the unipotent low triangular factor $L$, whose  entries are rational functions in the variables $y_{i}$. 
We finally compute the class of $M$  modulo $H$, obtaining three rational functions $f_1,f_2,f_3$ in the variables $y_i$, interpret them as a rational map $\P^3\dashrightarrow\P^3$, sending $[1:y_1:y_2:y_3]$ to $[1:f_1:f_2:f_3]$, 
and homogenize these expressions, adding a new variable $y_0$.  
\end{construction}

We have performed the above procedure using {\tt SageMath}, obtaining expressions for the birational maps associated to $r_1,r_2,r_3,\tau$ in the homogeneous coordinates $[y_0:y_1:y_2:y_3]$.
These expressions take a particularly simple form after the following change of coordinates:
\begin{equation}
\begin{pmatrix}x_0\\x_1\\x_2\\x_3 \end{pmatrix}=\begin{pmatrix}
6 & 0 &  0 &  0\\
3 & -6 & 0 & 0\\
3  & 0 & 6 & 0\\
2 & -3 & 3 & -6
\end{pmatrix}\begin{pmatrix}y_0\\y_1\\y_2\\y_3 \end{pmatrix}
\label{eq:change}
\end{equation}
The outcome we obtain is the following:
\begin{proposition}\label{prop:birmaps}
The group $\FO$ is generated by the linear transformations $r_1$, $r_3$ and $\tau$, defined by the matrices
\[
R_1=
\begin{pmatrix}
0 & 1 & 0 & 0 \\
1 & 0 & 0 & 0 \\
0 & 0 & 0 & 1 \\
0 & 0 & 1 & 0
\end{pmatrix}
\qquad
R_3=
\begin{pmatrix}
0 & 0 & 1 & 0 \\
0 & 0 & 0 & 1 \\
1 & 0 & 0& 0 \\
0 & 1 & 0 &0
\end{pmatrix}
\qquad
T=
\begin{pmatrix}
1 & 0 & 0 & 0 \\
0 & 0& 1 & 0 \\
0 & 1 & 0 & 0 \\
0 & 0 & 0 & 1
\end{pmatrix}
\]
and by the quadratic Cremona transformation  $r_2:\PP^3 \dasharrow \PP^3$  given by 
\[
r_2([x_0:x_1:x_2:x_3])=[(x_1 - x_0)(x_2-x_0):   x_1(x_2-x_0) :x_2(x_1-x_0) :x_1x_2 - x_0x_3].
\]
\end{proposition}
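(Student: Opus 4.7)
The generation statement is immediate from Definition~\ref{def:FOgroup}: by construction $\FO$ is the image of $W\rtimes\Z/2\Z$ in $\Bir(\P^3)$, and this source is generated by the simple reflections $r_1, r_2, r_3$ together with the anti-transposition $\tau$. The entire content of the proposition is therefore the explicit identification of the four birational maps $\sigma:\P^3\dashrightarrow\P^3$ attached to these generators.

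The plan is to implement Construction~\ref{const:birmaps} directly, one generator at a time. Starting from the general point $[1:y_1:y_2:y_3]$, lift to the strictly lower triangular matrix $Y\in\fn$ displayed in the construction, exponentiate, and form $\sigma\cdot e^Y$ (or $w_0(e^Y)^{-\top}w_0$ for $\tau$). Perform an LU factorization $\sigma\cdot e^Y=LU$ with $L$ lower unipotent and $U$ upper triangular, set $M=\log L\in\fn$, and reduce modulo the diagonal $H$-action to extract three scalars $f_1,f_2,f_3$, which homogenize to the map $[1:y_1:y_2:y_3]\mapsto[1:f_1:f_2:f_3]$. Applying the linear change of variables (\ref{eq:change}) then converts everything to the coordinates $[x_0:x_1:x_2:x_3]$.

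For $r_1$ and $r_3$, the swap happens at one end of $e^Y$, so the LU decomposition amounts essentially to a single pivot and the $f_i$ are Laurent monomials in $y_1,y_2,y_3$; after the torus normalization and the change of coordinates these assemble into the signed permutation matrices $R_1$ and $R_3$. The anti-transposition $\tau$ preserves the Borel structure, so although it involves inversion the resulting map is again linear, yielding the matrix $T$. The genuinely difficult case is $r_2$: swapping the two middle rows destroys the triangular structure in a way that couples all six subdiagonal entries of $e^Y$, so the LU step requires inverting a $2\times 2$ principal minor which depends nontrivially on the $y_i$. A direct computation produces rational functions sharing a common denominator of degree two, and after the change of coordinates one recognizes the stated quadratic Cremona transformation.

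The main obstacle is purely symbolic. One must either execute the matrix algebra by hand with considerable care, or rely on computer algebra (the authors provide a \texttt{SageMath} notebook in \cite{AzulSage}). Moreover, the change of variables (\ref{eq:change}) is not canonical: it is engineered so that all four generators take their simplest possible form simultaneously, and once it is guessed the verification reduces to substituting into the raw output and checking equality of rational maps, which is routine.
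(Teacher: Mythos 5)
Your proposal is correct and follows essentially the same route as the paper: the generation claim is immediate from Definition~\ref{def:FOgroup}, and the explicit formulas are obtained by carrying out Construction~\ref{const:birmaps} (exponentiate, multiply by the generator, LU-factor, take the logarithm of the unipotent factor, reduce modulo $H$, homogenize) followed by the change of coordinates~(\ref{eq:change}), exactly as the authors do with their {\tt SageMath} script. The only cosmetic slip is calling $R_1$ and $R_3$ ``signed'' permutation matrices --- they are plain permutation matrices --- which does not affect the argument.
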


In particular, one may check that all the generators are involutions, and the relations among them are those defining the octahedral symmetry group, that is:
\[
 r_1r_2r_1 =r_2r_1r_2, \qquad r_3r_2r_3 =r_2r_3r_2, \qquad 
\tau r_1 \tau = r_3 \qquad \tau r_2 \tau = r_2. 
\]
In particular, we get that the group $\FO$ is isomorphic to $W\rtimes \Z/2\Z$:
\begin{corollary}\label{cor:birmaps}
The homomorphism $W\rtimes \Z/2\Z\lra 
\Bir(\P^3)$ is injective.
\end{corollary}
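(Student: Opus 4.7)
The plan is to realize the homomorphism $W \rtimes \Z/2\Z \to \Bir(\P^3)$ as coming from the natural action of the octahedral group on the Chow quotient $X$, and to deduce injectivity from the faithfulness of that action. Concretely, the natural $W$-action on $F = \PGL(4)/B$ by left multiplication (via the permutation-matrix embedding $W \hookrightarrow \PGL(4)$) combined with the anti-transposition automorphism $\tau$ defines an action of $W \rtimes \Z/2\Z$ on $F$ which normalizes $H$, and hence descends to an action by regular automorphisms on $X$. By Definition~\ref{def:birgroup}, composition with the birational morphisms $\pi_e : X \to X_e$ and $p : X_e \to \P^3$ recovers the elements of $\FO$, so the map in question factors as $W \rtimes \Z/2\Z \to \Aut(X) \to \Bir(X_e) \to \Bir(\P^3)$. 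Since $\pi_e$ and $p$ are birational, the last two arrows are injective, and injectivity in $\Bir(\P^3)$ reduces to faithfulness of the action on $X$.

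To show that the action on $X$ is faithful, I would pick a very general $\xi \in X$, corresponding to the closure $\overline{H\cdot x}$ of a very general $H$-orbit in $F$: by Corollary~\ref{cor:momentgen} this is a generic three-dimensional orbit closure whose moment polytope is the full permutohedron. Because any $g \in W \rtimes \Z/2\Z$ normalizes $H$, the induced automorphism of $X$ sends $\xi$ to $[\overline{H \cdot g(x)}]$, and the condition $g_1 \cdot \xi = g_2 \cdot \xi$ becomes equivalent to $g_1(x) \in H\cdot g_2(x)$, that is, to the existence of $h \in H$ with $g_2^{-1}h^{-1}g_1 \in \operatorname{Stab}_{\Aut(F)}(x)$. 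The goal is then to verify that this forces $g_1 = g_2$.

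The main obstacle is a transversality step: for very general $x$, no element of a $\tau$-coset in $\Aut(F)$ fixes $x$ (their fixed loci on $F$ being proper closed subsets), so $\operatorname{Stab}_{\Aut(F)}(x)$ coincides with the Borel $\operatorname{Stab}_G(x) = g_x B g_x^{-1}$; one then has to show that this general Borel meets $N_{\Aut(F)}(H) = H \cdot (W \rtimes \Z/2\Z)$ only in $\{e\}$. Elements in $\tau$-cosets are immediately excluded since they lie outside $G$; for elements of the form $hw$ with $w \in W$, belonging to a general Borel is a proper Schubert-type condition on $x$ when $w \neq e$ (failing for generic $x$ by the dimension count $\dim H + \dim B < \dim G$), while for $w = e$ it forces $h = e$ since $H$ acts with generically trivial stabilizer. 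Combined with $(W \rtimes \Z/2\Z) \cap H = \{e\}$ in $\Aut(F)$, this yields $g_1 = g_2$ and completes the proof.
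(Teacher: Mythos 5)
Your reduction of the statement to the faithfulness of the $W\rtimes \Z/2\Z$-action on the Chow quotient $X$ is correct: by Definition~\ref{def:birgroup} the elements of $\FO$ are the conjugates, under the birational maps $\pi_e$ and $p$, of the automorphisms of $X$ induced by $W$ and $\tau$, and conjugation by a birational map is injective on groups of birational automorphisms. This is a genuinely different, more conceptual route than the paper's, which instead computes the four generators explicitly in coordinates (Proposition~\ref{prop:birmaps}), checks that they satisfy exactly the octahedral relations, and reads off injectivity from the explicit formulas (equivalently, from the faithful permutation action on the $20$ boundary divisors of Table~\ref{tab:20}).

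However, your transversality step contains a genuine gap. The claim that $\operatorname{Stab}_{\Aut(F)}(x)=\operatorname{Stab}_G(x)=g_xBg_x^{-1}$ for very general $x$ is false: since $G=\Aut(F)^0$ acts transitively on $F$, the stabilizer of \emph{every} point of $F$ meets \emph{every} connected component of $\Aut(F)$, in particular the coset $G\tau$ (the union of the fixed loci over a positive-dimensional family of automorphisms can, and here does, cover $F$). Consequently elements $g_2^{-1}h^{-1}g_1$ lying in $G\tau$ cannot be ``immediately excluded since they lie outside $G$''. What must actually be shown is that for very general $x$ no element of the $24$ three-dimensional cosets $Hw\tau$, $w\in W$, fixes $x$, i.e.\ that $\bigcup_{w\in W}\bigcup_{h\in H}\operatorname{Fix}(hw\tau)$ is not dense in $F$; this is true but needs an argument, since when $(hw\tau)^2=e$ the fixed locus is a symplectic or orthogonal flag variety of dimension up to $4$, and otherwise one has to pass to $(hw\tau)^2\in N_G(H)$ and control its fixed locus. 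A similar caveat applies to the dimension count for the cosets $Hw$ with $w\neq e$: for non-regular $h$ the semisimple element $hw$ has a positive-dimensional fixed locus (of dimension up to $3$), so the count $\dim H+\dim B<\dim G$ must be supplemented by a stratification of $H$ according to the regularity of $hw$. Both points are fixable, but as written the key claim --- that the stabilizer of a very general point meets $H\cdot(W\rtimes\Z/2\Z)$ only in $\{e\}$ --- is not established.
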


\subsection{The boundary divisors}\label{ssec:Foctdiv}

\begin{definition}\label{def:alldivisors}
Given $\sigma\in W$, we consider the  
isomorphism $\phi_\sigma: X_e\to X_\sigma$, and the birational contraction $\pi_\sigma:X\to X_\sigma$. The proper transforms in $X$ via $\pi_\sigma$ of the image via $\phi_\sigma$ of the divisors  $A_1,B_2,C_{02},D_{12}$ will be denoted by $$A_{\sigma(1)},B_{\sigma(2)},C_{\sigma(0)\sigma(2)},D_{\sigma(1)\sigma(2)}\subset X.$$ 
Alternatively, we may define them as the proper transforms in $X$ of the boundary divisors $A_{\sigma(1)},B_{\sigma(2)},C_{\sigma(0)\sigma(2)},D_{\sigma(1)\sigma(2)}\subset X_\sigma$ (see Section \ref{ssec:combislimit}). We will call these prime divisors the {\em boundary divisors of  
$X$}. By definition, the group $W$ acts on the set of boundary divisors, preserving their types ($A,B,C,D$).

\end{definition}

\begin{remark}\label{rem:alldivisors}
Each boundary divisor corresponds to a family of $H$-invariant cycles whose irreducible components are contained in some fundamental $H$-invariant subvarieties. For instance, in the case of a divisor $A_i$, $i\neq 1$, following Section \ref{ssec:boundarydiv}, we consider the fixed component $Y_e \ni eB$ of the $\C^*$-action associated with $A_1$, we take a permutation $\sigma$ sending $1$ to $i$, and set $Y:=\sigma(Y_e)$. Note that $Y$ is independent of the choice of $\sigma$. Then $A_i$ parametrizes cycles having irreducible components contained in  $F^+_{A_i}(Y)$ and in $F^+_{B_i}(Y)$. In particular, if two permutations $\sigma,\sigma'$ satisfy that $\sigma(1)=\sigma'(1)$, then the divisors $A_{\sigma(1)}, A_{\sigma'(1)}\subset X$ are the same; a similar property holds for the divisors of types $B,C,D$. 
\end{remark}

\begin{notation}\label{not:alldivisors}
Abusing notation, the closed sets obtained as images of the boundary divisors $A_i,B_i,C_{ij},D_{ij}\subset X$ via a contraction $\pi_\sigma$ will be denoted again by $A_i$, $B_i$, $C_{ij}$, $D_{ij}\subset X_\sigma$, for every $\sigma\in W$. The same convention will apply to the images of these divisors under any contraction of  $X$. We will use this notation particularly often in the case of the images of the boundary divisors by the composition $p\circ \pi_e:X\to X_e \to \P^3$.
\end{notation}

\begin{remark}\label{rem:Dij=Dji}
Consider the permutation $r_2$, and the divisors $D_{12}\subset X_e$ and  $D_{21}=\phi_{r_2}(D_{12})\subset X_{r_2}$. Using the notation  presented in Section \ref{ssec:boundarydiv}, $D_{21}\subset X_e$ is determined by two invariant subsets $F_{C_{12}}^+(Y_e)$,  $F_{C_{03}}^+(Y_e)$; since $r_2$ preserves the partitions $C_{12}$ and $C_{03}$, the divisor $D_{21}$ is determined by the invariant sets $F_{C_{12}}^+(r_2(Y_e))$,  $F_{C_{03}}^+(r_2(Y_e))$. One may easily check that $r_2(Y_e)=Y_e$, hence the proper transforms of $D_{12}$ and $D_{21}$ in $X$ coincide. Then, by the action of $W$, we conclude that, in $X$,
$$
D_{ij}=D_{ji} \mbox{ for every } i\neq j. 
$$
In particular we have, at most, six boundary divisors of type $D$.
\end{remark}

\begin{remark}\label{rem:actFO}
By construction, the induced actions of $W$ 
on the sets: 
\[
\{A_i,\,\,0\leq i\leq 3\},\ \ \{B_i,\,\,0\leq i\leq 3\},\ \ \{C_{ij},\,\,0\leq i<j\leq 3\},\ \ \{D_{ij},\,\,0\leq i<j\leq 3\},
\] 
are compatible with the indexing and the identification of $W$ as the group of permutations of $\{0,1,2,3\}$. 
As for the action of $\tau$, we have \begin{equation}\label{eq:tauondiv}\begin{array}{l}A_i\stackrel{\tau}{\longleftrightarrow} B_{w_0(i)},\quad D_{ij}\stackrel{\tau}{\longleftrightarrow} D_{w_0(j) w_0(i)}\quad (w_0=(03)(12)\in W),\\[3pt] C_{12}\stackrel{\tau}{\longleftrightarrow}C_{03},\quad \tau(C_{ij})=C_{ij} \mbox{ for }(i,j)\neq (1,2), (0,3).\end{array}
\end{equation}
\end{remark}

Furthermore, using the coordinate expression of $r_1,r_2,r_3$ and $\tau$ given in Proposition \ref{prop:birmaps}, we may check that these divisors have different images via the contraction $p \circ\pi_e:X\to\P^3$ and, in particular, they are $20$ different prime divisors in $X$. The result of our computations, for which we have used {\tt SageMath}, are shown in the following statement, where we use the set of homogeneous coordinates $[x_0:x_1:x_2:x_3]$ in $\P^3$ introduced in Equation (\ref{eq:change}).

\begin{corollary}\label{cor:birmaps2}
The images via $p\circ \pi_e:X\to\P^3$ of the boundary divisors of $X$ 
are the subvarieties listed in Table \ref{tab:20}.
\begin{table}[h!]
\begingroup
\renewcommand*{\arraystretch}{1.1}
\begin{tabular}{|C|C|C||C|C|C|}
\hline
A_0 & \text{line} & x_1=x_3=0 & B_0 &\text{plane}  & x_1-x_3=0\\
\hline
A_1 & \text{line} &x_0=x_2=0 &B_1 &\text{plane}   &x_0-x_2=0 \\
\hline
A_2 & \text{plane} &x_0-x_1=0 & B_2 & \text{line}&x_0=x_1=0 \\
\hline
A_3 & \text{plane} &x_2-x_3=0 & B_3 & \text{line}&x_2=x_3=0 \\
\hline\hline
C_{01} &\text{point}&[1:1:1:1] & D_{01} &\text{line}&x_0-x_2=x_1-x_3=0\\
\hline
C_{02} &\text{plane}&x_0=0 & D_{02} &\text{point}&[0:0:1:0]\\
\hline
C_{03} &\text{plane}&x_2=0 & D_{03} &\text{point}&[1:0:0:0]\\
\hline
C_{12} &\text{plane}&x_1=0 & D_{12} &\text{point}&[0:0:0:1]\\
\hline
C_{13} &\text{plane}&x_3=0 & D_{13} &\text{point}&[0:1:0:0]\\
\hline
C_{23} &\text{quadric}& x_0x_3-x_1x_2=0 & D_{23} &\text{line}& x_0-x_1=x_2-x_3=0 \\
\hline
\end{tabular}\par\medskip
\caption{Images in $\PP^3$ of the boundary divisors.\label{tab:20}}
\vspace{-20pt}
\endgroup
\end{table}
\end{corollary}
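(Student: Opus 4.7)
The plan is to reduce the twenty computations to the four seed divisors $A_1, B_2, C_{02}, D_{12}$ of $X_e$, and then transport them via the explicit action of the tile group on $\PP^3$.

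First, I would read off directly from Construction~\ref{const:1} the images under $p:X_e \to \PP^3$ of the four seed divisors in the $y$-coordinates: $p(A_1)$ and $p(B_2)$ are the blow-up centers $\{y_0 = y_2 = 0\}$ and $\{y_0 = y_1 = 0\}$, $p(C_{02})$ is the plane $\{y_0 = 0\}$, and $p(D_{12})$ is the point $[0:0:0:1]$. The linear change of coordinates~(\ref{eq:change}) converts these into the entries of Table~\ref{tab:20} for $A_1, B_2, C_{02}, D_{12}$, giving the base cases.

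Next, combining diagram~(\ref{eq:phiw1}) with Definition~\ref{def:birgroup}, a short diagram chase yields $\sigma_e = \pi_e \circ \sigma_X \circ \pi_e^{-1}$ for every $\sigma \in W \rtimes \ZZ/2\ZZ$ (writing $\sigma_X$ for the action on $X$ and $\sigma_e$ for the birational automorphism of $X_e$ of Definition~\ref{def:birgroup}). Composing with $p$ gives
\[
(p \circ \pi_e)(\sigma_X(D)) \;=\; \sigma\bigl((p \circ \pi_e)(D)\bigr)
\]
in $\PP^3$, where $\sigma$ on the right is the birational automorphism from Proposition~\ref{prop:birmaps}. By Remark~\ref{rem:alldivisors}, each boundary divisor of $X$ is of the form $\sigma_X(D_0)$ with $D_0 \in \{A_1, B_2, C_{02}, D_{12}\}$ and $\sigma \in W \rtimes \ZZ/2\ZZ$, so every desired image is produced by applying the appropriate $\sigma$ to one of the four seed images.

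The final step is the case-by-case evaluation. For the linear generators $r_1, r_3, \tau$ the map on $\PP^3$ is a coordinate involution, so the images are immediate: e.g., $A_0 = r_1(A_1)$ yields the line $\{x_1 = x_3 = 0\}$, and $C_{03} = \tau(C_{12})$ yields the plane $\{x_2 = 0\}$. The delicate case is the Cremona transformation $r_2$, needed to reach the plane divisors $A_2, A_3, B_0, B_1$ and the quadric $C_{23}$. When the source divisor lies in the base locus of $r_2$ -- as does $A_1 = \{x_0 = x_2 = 0\}$ -- one computes the strict transform by parametrizing a family such as $[\epsilon t : a + \alpha t : \delta t : b + \beta t]$ approaching a general point from outside the indeterminacy, expanding $r_2$ in $t$, and letting $t \to 0$; for $A_1$ this produces the plane $\{x_0 - x_1 = 0\}$, recovering $A_2$. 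Iterating this recipe over the relevant group words yields the remaining entries, including the quadric $C_{23}$. The main obstacle is the bookkeeping for nested proper transforms under repeated applications of $r_2$, which is best done symbolically; this is precisely the role of the {\tt SageMath} script at~\cite{AzulSage}.
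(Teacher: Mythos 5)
Your proposal is correct and follows essentially the same route as the paper's proof: compute the four seed divisors $A_1,B_2,C_{02},D_{12}$ of $X_e$ in the coordinates of (\ref{eq:change}), then transport them through the tile-group equivariance of $p\circ\pi_e$, with the linear generators $r_1,r_3,\tau$ acting by coordinate permutations and only $r_2$ requiring care. The single (minor) divergence is in that last step: the paper avoids strict transforms of base-locus subvarieties by computing forward images of candidates outside the base locus (e.g.\ $r_2(\{x_0-x_1=0\})=A_1$) and invoking involutivity, whereas you compute the strict transforms directly via a limiting parametrization --- both yield the same entries of Table \ref{tab:20}.
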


\begin{proof}
Under the change of coordinates (\ref{eq:change}), we easily compute the equations of  $A_1$, $B_2$, $C_{02}$, $D_{12}$. Then,
applying the automorphisms $r_1,r_3$, we get the equations of $A_0$, $B_3$, $C_{03}$, $C_{12}$, $C_{13}$, $D_{02}$, $D_{03}$, $D_{13}$. 
We can then check that $r_2$ sends:
\begin{itemize}[leftmargin=\xx pt]
\item the plane  $x_0-x_1=0$ to $A_1$, 
\item the plane $x_0-x_2=0$ to $B_2$, 
\item the line $x_0-x_2=x_1-x_3=0$ to $D_{02}$, and
\item the quadric $x_0x_3-x_1x_2=0$ to $C_{13}$.
\end{itemize}
We then conclude that these varieties are $A_2,B_1,D_{01},C_{23}\subset\P^3$, respectively.

Finally, the equations of the remaining subvarieties are computed  applying $r_1$ to $B_1$, $r_2$ to $C_{02}$, $r_3$ to $A_2$, and $\tau$ to $D_{01}$.
\end{proof}

We have represented these subvarieties of $\P^3$ in Figure (\ref{fig:20}). The square represents the quadric $C_{23}$; the rest are either lines contained in $C_{23}$ ($A_0,A_1,B_2,B_3,D_{01}$, $D_{23}$), points in $C_{23}$($C_{01},D_{02},D_{03},D_{12},D_{13}$), or tangent spaces to $C_{23}$ at points ($A_2$, $A_3$, $B_0$, $B_1$, $C_{02}$, $C_{03}$, $C_{12}$, $C_{13}$, represented as the colored triangular shapes).

\begin{figure}[h!]
\includegraphics[width=6cm]{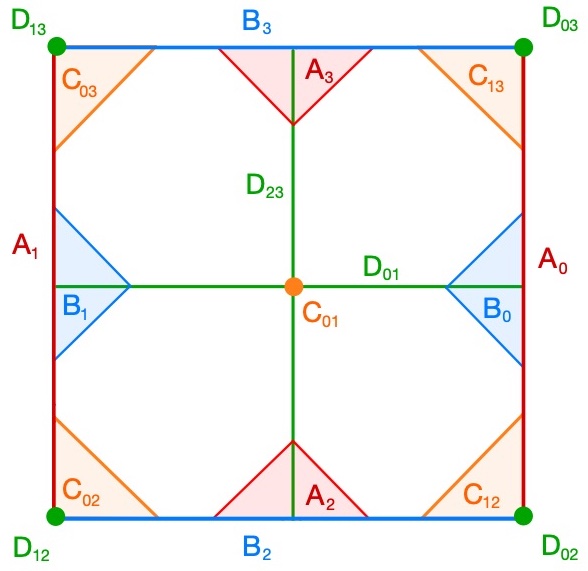}
\caption{Images in $\PP^3$ of the boundary divisors.\label{fig:20} }
\end{figure}

\section{The tile threefold}\label{sec:constructX}

In this section we construct a smooth birational modification $X'$ of $\P^3$, which we call the {\em tile threefold}, on which the birational maps induced by the elements of $\FO\subset\Bir(\P^3)$ are isomorphisms, and prove that it is isomorphic to the Chow quotient $X$ of $F$. In a nutshell, the tile threefold $X'$ is a resolution of indeterminacies of the birational maps of the tile group, compatible with the contractions $X_\sigma\to \P^3$;  we may then use the description of the Chow quotient $X$ as inverse limit of the $X_\sigma$'s to identify $X$ and $X'$.

\begin{definition}\label{def:X'}
The {\em tile $3$-fold} $X'$ is the smooth threefold obtained from $\P^3$ by blowing up first $A_0 \sqcup A_1$, then the strict transform of  $B_2 \sqcup B_3$,
then $C_{01}$, then the strict transform of $D_{01}, D_{23}$ and the inverse images of the remaining $D_{ij}$ -- the notation  here is the one of Table \ref{tab:20}.
We denote by $q:X' \to \PP^3$ the composition of these blowups. Abusing notation, we denote by $A_0,A_1,\dots,D_{23}\subset X'$ the prime divisors defined as proper transforms in $X'$ of the corresponding subsets of $\P^3$.
\end{definition}

Note that $X'$ is smooth, rational and $\rho_{X'}=12$. The goal of this section is to prove the following statement, that constitutes the first part of Theorem \ref{thm:main}:
\begin{proposition}\label{prop:XisoX'}
With the above notation, there exists an isomorphism:
\begin{equation}X'\simeq X. \label{eq:X'X}\end{equation} 
In particular, the 
Chow quotient $X$ of $F$ 
is a smooth rational threefold of Picard number $12$.
\end{proposition}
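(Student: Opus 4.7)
Plan: The plan is to use the inverse-limit characterization of the Chow quotient provided by Proposition \ref{prop:invlim}: since $X$ is the normalization of the image of the product map $(\pi_\sigma)_{\sigma\in W}:X\to\prod_{\sigma\in W}X_\sigma$, it suffices to construct a compatible family of birational morphisms $\eta_\sigma:X'\to X_\sigma$, which will then assemble into a birational morphism $\beta:X'\to X$ that we must show is an isomorphism.

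The construction of $\eta_e:X'\to X_e$ is immediate from the universal property of blowups, since Construction \ref{const:1} realizes $X_e$ as the successive blowup of $\P^3$ along the line $A_1$, the strict transform of $B_2$, and the strict transform of the exceptional fiber over $D_{12}$, and all of these centers are (iterated strict transforms of) centers appearing in Definition \ref{def:X'}. For general $\sigma\in W$ the strategy is to lift each element of the tile group $\FO\subset\Bir(\P^3)$ to a biregular automorphism $\widetilde\sigma:X'\to X'$ and set
\[
\eta_\sigma := \phi_\sigma\circ \eta_e\circ \widetilde{\sigma^{-1}};
\]
compatibility of the resulting collection with the birational identifications $\pi_\sigma\circ(\pi_e)^{-1} = \phi_\sigma\circ\sigma^{-1}$ of Equation (\ref{eq:defsigma}) then follows from the group law of the lifts.

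The heart of the proof is the construction of the lifts $\widetilde\sigma$. For the three linear involutions $r_1,r_3,\tau$ of Proposition \ref{prop:birmaps} this is essentially combinatorial: each permutes the $20$ subvarieties of Table \ref{tab:20} in a way that is compatible with the order of blowups in Definition \ref{def:X'}. The principal obstacle is the quadratic Cremona generator $r_2$: one must verify that the $11$ blowup centers in the definition of $X'$ precisely resolve the base scheme of $r_2$. A direct inspection of the formula in Proposition \ref{prop:birmaps} shows that this base scheme is supported on $A_0\cup A_1\cup B_2\cup B_3\cup C_{01}\cup\bigcup_{i\ne j}D_{ij}$, which is exactly the union of centers in Definition \ref{def:X'}; a careful step-by-step analysis (automatable via \texttt{SageMath}, along the lines of the scripts in \cite{AzulSage}) verifies that the iterated strict transforms remain smooth and that $r_2$ indeed lifts to an automorphism of $X'$.

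To conclude that the resulting morphism $\beta:X'\to X$ is an isomorphism, I would observe that the $11$ exceptional divisors of $q:X'\to\P^3$ together with the $9$ strict transforms of the divisorial loci in Table \ref{tab:20} map bijectively under $\beta$ onto the $20$ boundary divisors of $X$ from Definition \ref{def:alldivisors}; hence $\beta$ is small. Since $X'$ is smooth by construction and $X$ is normal, and the inverse-limit description of $X$ forces its local structure to match that of $X'$ at every point (via the compatible family of morphisms to the smooth toric varieties $X_\sigma$), $\beta$ is forced to be an isomorphism. The smoothness, rationality and Picard number $12$ assertions of the proposition then follow immediately from the description of $X'$ in Definition \ref{def:X'}.
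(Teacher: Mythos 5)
Your overall strategy coincides with the paper's: lift the tile group $\FO$ to automorphisms of $X'$, use these to define compatible morphisms $X'\to X_\sigma$, and invoke Proposition \ref{prop:invlim} to obtain a birational morphism $X'\to X$ which must then be shown to be an isomorphism. However, there are two concrete problems. First, your description of the base scheme of the Cremona involution $r_2$ is wrong: from the formula in Proposition \ref{prop:birmaps} one computes that the common zero locus of the four quadrics is exactly $A_1\cup B_2\cup\{C_{01}\}$ (the reducible conic $x_0=x_1x_2=0$ together with the point $[1:1:1:1]$), not the full union of the eleven blowup centers. Consequently the verification you propose -- that ``the $11$ blowup centers precisely resolve the base scheme of $r_2$'' -- is not the right statement. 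What must actually be checked is (i) that blowing up $A_1$, $B_2$ and $C_{01}$ already resolves $r_2$ to an automorphism of the intermediate variety, and (ii) that this automorphism \emph{stabilizes} the union of the remaining centers $A_0, B_3, D_{01}, D_{02}, D_{03}, D_{13}, D_{23}$, so that it lifts further to $X'$. Point (ii) is an additional verification that your argument omits entirely.

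Second, your concluding step does not close. Showing that the twenty divisors map bijectively onto the boundary divisors of $X$ only shows that $\beta$ contracts no divisor, i.e.\ that $\beta$ is small; a small proper birational morphism onto a normal variety need not be an isomorphism (a small resolution of a threefold node is the standard counterexample), and the appeal to the inverse-limit description ``forcing the local structure to match'' is not an argument. Since you do not know a priori that $X$ is $\Q$-factorial, smallness buys you nothing here. The correct way to finish -- and the one the paper uses -- is via Zariski's main theorem: $\beta$ is birational onto a normal variety, so it suffices to show it is finite, i.e.\ contracts no curve. A curve contracted by $\beta$ would be contracted by every $\eta_\sigma$, hence contained in the intersection over $\sigma\in W$ of the exceptional loci of the $\eta_\sigma$; since each exceptional locus is the $\sigma$-translate of $\Exc(p_c\circ p')$ and these translates have empty common intersection, no such curve exists. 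You already have all the ingredients for this argument in hand; you simply did not assemble them.
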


\begin{remark}\label{rem:ordering}
Note that we can make some changes in the order of the blowups described above, while still obtaining the same variety $X'$. Any ordering that respects the following rules is allowed:
\begin{itemize}[leftmargin=\xx pt]
\item the blowup of $D_{ij}$, $(i,j)\neq (0,1),(2,3)$ must be performed after the blowups of $A_i$ and $B_j$ (not necessarily in this order, see Remark \ref{rem:A1B2D12});
\item the blowup of $D_{01}$ (respectively $D_{23}$) must be performed after the blowup of $A_0,A_1$ (resp. $B_2,B_3$) and $C_{01}$.
\end{itemize} 
\end{remark}

By the universal property of the blowup, the isomorphisms $r_1,r_3,\tau\in \FO$ extend to automorphisms of $X'$, which we also denote by $r_1,r_3,\tau$:
\[
\xymatrix@R=18pt@C=35pt{X'\ar[r]_{q}\ar[d]_{r_1,r_3,\tau}&\P^3\ar[d]^{r_1,r_3,\tau}\\
X'\ar[r]_{q}&\P^3}
\]
In order to extend also the birational map $r_2$ to an automorphism of $X'$, we need the following factorization of $q$:

\begin{lemma}\label{lem:facto}
Let $p:X_e\to\P^3$ be the contraction defined in formula (\ref{eq:projP3}), and $p_{c}:Y\to X_e$ be the blowup of $X_e$ along the point $C_{01}$. Then there exists a contraction $p':X'\to Y$ such that we have a factorization:
\[
\xymatrix@C=35pt{X'\ar[r]^{p'}\ar@/_10pt/[rrr]_q&Y\ar[r]^{p_c}&X_e\ar[r]^p&\P^3}
\]
\end{lemma}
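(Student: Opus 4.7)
The plan is to invoke the flexibility in the ordering of blowups provided by Remark \ref{rem:ordering} in order to reshuffle the sequence defining $q: X' \to \P^3$, so that the first four blowups are precisely those of $p_c \circ p: Y \to \P^3$; the remaining blowups will then yield the desired morphism $p': X' \to Y$.

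First, I would reorder the construction so that $A_1$ is blown up first (allowed since $A_0 \sqcup A_1$ is a disjoint union), then the strict transform of $B_2$ (again legal, since $B_2$ is disjoint from $A_0$ and from $B_3$). Next, the inverse image of $D_{12}$ may be blown up: this is legal by Remark \ref{rem:ordering} because both $A_1$ and $B_2$ have already been blown up. Under the natural interpretation consistent with the smoothness of $X'$, the relevant center coincides with the strict transform of $p_a^{-1}(D_{12})$, i.e., with the center of $p_d$ in Construction \ref{const:1}. By Construction \ref{const:1}, the composition of these three blowups is exactly $p: X_e \to \P^3$. As the fourth blowup I would take $C_{01}$; by Table \ref{tab:20}, the point $C_{01} = [1:1:1:1]$ lies on none of $A_0, A_1, B_2, B_3$ and coincides with none of the points $D_{02}, D_{03}, D_{12}, D_{13}$, so it is disjoint from all previously blown-up centers and may legitimately be moved up to this position. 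The result of these four blowups is $Y$, with map $p_c \circ p$ to $\P^3$.

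The remaining centers — $A_0$, the strict transform of $B_3$, the inverse images of $D_{02}, D_{03}, D_{13}$, and the strict transforms of $D_{01}, D_{23}$ — are then blown up in an order consistent with Remark \ref{rem:ordering} (in particular, the strict transforms of $D_{01}, D_{23}$ are performed last, after $C_{01}$, as required). This sequence defines a birational morphism $p': X' \to Y$, and by construction we obtain $q = p \circ p_c \circ p'$.

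The principal point requiring care is a bookkeeping verification that the reordering complies with Remark \ref{rem:ordering}: one must check that placing $C_{01}$ before $A_0, B_3$ and the $D_{ij}$-blowups does not violate any constraint, and — more delicately — that the "inverse image of $D_{12}$" appearing in Definition \ref{def:X'} can indeed be identified with the strict transform of $p_a^{-1}(D_{12})$ used in Construction \ref{const:1}, so that the composition of the first three blowups genuinely produces $X_e$ rather than a different model.
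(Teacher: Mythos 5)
Your proof is correct and follows essentially the same route as the paper's: both invoke Remark \ref{rem:ordering} to reorder the blowups defining $q$ so that the centers producing $p_c\circ p$ (namely $A_1$, $B_2$, $D_{12}$, $C_{01}$) come first, the remaining blowups then assembling into the contraction $p':X'\to Y$. The extra care you take in identifying the $D_{12}$-center with the center of $p_d$ from Construction \ref{const:1} addresses a point the paper leaves implicit (it is really part of the content of Remarks \ref{rem:A1B2D12} and \ref{rem:ordering}), and does not alter the argument.
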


\begin{proof}
By Remark \ref{rem:ordering}, it is enough to define $p'$ as the composition of the sequence of blowups of $Y$ along the strict transform of $A_0$, then along  the strict transform of  $B_3$, then along the strict transforms of $D_{01}, D_{23}$, and finally along the inverse images of $D_{02},D_{03},D_{13}$.
\end{proof}

\begin{lemma}\label{lem:r2}
The birational automorphism $r_2$ of $\P^3$ 
extends to an automorphism of $X'$ (which, abusing notation, we denote again by $r_2$) so that we have a commutative diagram: 
\[
\xymatrix@R=18pt@C=30pt{X'\ar[r]^{p_c\circ p'}\ar[d]_{r_2}^{\simeq}&X_e\ar[r]^p\ar@{-->}[d]^{r_2}&\P^3\ar@{-->}[d]^{r_2}\\
X'\ar[r]^{p_c\circ p'}&X_e\ar[r]^p&\P^3}
\]
\end{lemma}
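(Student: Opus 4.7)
I would exploit the factorization $q = p \circ p_c \circ p'$ from Lemma~\ref{lem:facto} to lift $r_2$ in three stages. Unlike the linear generators $r_1, r_3, \tau$ which extended to $X'$ by the universal property of $q$, the quadratic map $r_2$ requires tracking its base locus through the successive blowups. The first step is to identify the indeterminacy locus of $r_2 : \P^3 \dashrightarrow \P^3$: setting the four defining quadrics from Proposition~\ref{prop:birmaps} simultaneously to zero and using the coordinates of (\ref{eq:change}), a direct calculation shows that the reduced base scheme of $r_2$ is the union of the two lines $A_1$ and $B_2$ (meeting at the point $D_{12}$) together with the isolated point $C_{01}$.

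The second step is to check that the morphism $p : X_e \to \P^3$, which by Construction~\ref{const:1} is the composition of blowups along $A_1$, along the strict transform of $B_2$, and along the strict transform of $D_{12}$, resolves precisely the curve-plus-intersection-point part of the indeterminacy. This is an explicit local affine-chart computation: one factors the four defining quadrics by the appropriate exceptional divisor class at each stage and verifies that the induced rational self-map $r_2 : X_e \dashrightarrow X_e$ becomes regular along the strict transforms of $A_1$, $B_2$, and $D_{12}$. This is consistent with the action of $r_2 = (1\,2) \in W$ on boundary divisors, which exchanges $A_1 \leftrightarrow A_2$ and $B_2 \leftrightarrow B_1$ and fixes $D_{12}$. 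After these three blowups, the only remaining indeterminacy is at the point $C_{01}$.

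The third step is immediate: by the universal property of $p_c : Y \to X_e$ (blowup at $C_{01}$) the map $r_2 : X_e \dashrightarrow X_e$ lifts to a morphism $Y \to X_e$. Applying the same argument to the inverse $r_2^{-1} = r_2$ (an involution by Proposition~\ref{prop:birmaps}) shows that this morphism extends through $p' : X' \to Y$ to a morphism $X' \to X'$, which, being its own inverse, is an automorphism. By construction it fits into the commutative diagram of the statement.

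The main obstacle is the second step, namely confirming that the three blowups composing $p$ resolve exactly the $A_1 \cup B_2 \cup \{D_{12}\}$ component of the indeterminacy of $r_2$. The delicate part is at the intersection point $D_{12} = A_1 \cap B_2$: after blowing up the two lines $A_1$ and $B_2$, the proper transform of the rational map still has an isolated indeterminacy at the strict transform of $D_{12}$, and one has to check that the third blowup (of that strict transform) separates the relevant linear system. Everything else is routine application of the universal property of blowing up.
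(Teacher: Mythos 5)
Your steps 1 and 2 follow the paper's route: the reduced base locus of $r_2$ is indeed the reducible conic $A_1\cup B_2$ together with the point $C_{01}$, and the paper likewise observes (via Remark \ref{rem:A1B2D12}) that $p:X_e\to\P^3$ resolves the conic part, so that after the further blowup $p_c$ of $C_{01}$ one obtains an automorphism of $Y$. Two small cautions there: the third blowup in Construction \ref{const:1} is along the strict transform of the \emph{fiber} $(p_a)^{-1}(P)$ over $D_{12}$, not of the point itself; and the ``universal property of $p_c$'' does not by itself produce the morphism $Y\to X_e$ -- one must actually check that a single blowup of the isolated indeterminacy point $C_{01}$ suffices (the paper asserts this too, so I will not press the point).

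The genuine gap is in your third step. Passing from the automorphism of $Y$ to an automorphism of $X'$ is \emph{not} a consequence of $r_2$ being an involution: $X'$ is obtained from $Y$ by further blowups along the strict transforms of $A_0$, $B_3$, $D_{01}$, $D_{23}$ and the inverse images of $D_{02}$, $D_{03}$, $D_{13}$, and an automorphism of $Y$ lifts through these blowups only if it stabilizes their union (so that the pulled-back ideal of the center is already invertible on $X'$ and the universal property applies). ``Applying the same argument to the inverse'' does not establish this; a generic involution of $Y$ would not lift. The missing ingredient -- which is exactly what the paper's proof supplies -- is the observation that $r_2=(1\,2)$ permutes the boundary loci according to its permutation action: it fixes $A_0$, $B_3$ and $D_{03}$, and exchanges $D_{01}\leftrightarrow D_{02}$ and $D_{13}\leftrightarrow D_{23}$, hence stabilizes the center of $p'$ (here Remark \ref{rem:ordering} is also needed, since the centers are swapped across different stages of the blowup sequence). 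Once that is in place, the lift exists and your closing remark that a self-inverse morphism extending the identity on a dense open set is an automorphism is fine.
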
 

\begin{proof}
Let us describe a resolution of the birational involution $r_2: \PP^3  \dasharrow \PP^3$. Its base locus scheme is the union of the reducible conic $x_0=x_1x_2=0$ (union of $A_1$ and $B_2$) and of the point $C_{01}$. Following Remark \ref{rem:A1B2D12} the blowup of $\P^3$ along the reducible conic is isomorphic to the toric variety $X(\Sigma_0)$, and the blowup of its singular point is the smooth threefold $X_e$. 
We thus get a commutative diagram:   
$$
 \xymatrix@R=18pt@C=35pt{X_e \ar@{-->}[d]_{r_2}  \ar[r]^{p} & \PP^3\ar@{-->}[d]^{r_2}\\ X_{e} \ar[r]^{p}  & \PP^3}
 $$
To complete the resolution we need to blowup the point $C_{01} \in X_e$, obtaining the variety $Y$ and an automorphism $r_2$ of $Y$ fitting in the commutative diagram:
 $$
 \xymatrix@R=18pt@C=35pt{Y \ar[r]^{p\circ p_c}  \ar[d]_{r_2}^{\simeq} & \PP^3 \ar@{-->}[d]^{r_2}\\ Y \ar[r]^{p\circ p_c}  & \PP^3}
 $$
Now we note that $r_2$ stabilizes the union of $A_0,B_3,D_{01},D_{02},D_{03},D_{13},D_{23}$ in $Y$; since this union is the center of $p':X'\to Y$ (see the proof of Lemma \ref{lem:facto} above), then  $r_2$  extends to an automorphism of $X'$.
\end{proof}
 
As a consequence, we get the following: 

\begin{corollary}\label{cor:FOautom}
There exists a monomorphism $\FO\to \Aut(X')$ such that the maps $p_c\circ p':X'\to X_e$, $p:X_e\to \P^3$ are $\FO$-equivariant.
\end{corollary}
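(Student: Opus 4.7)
The plan is to verify four things in turn: (a) each of the generators $r_1,r_2,r_3,\tau$ of $\FO$ lifts to an automorphism of $X'$; (b) these lifts assemble into a well-defined group homomorphism $\FO\to\Aut(X')$; (c) the homomorphism is injective; (d) the morphisms $p_c\circ p'\colon X'\to X_e$ and $p\colon X_e\to\P^3$ are equivariant with respect to the birational $\FO$-actions on $X_e$ and $\P^3$ defined in Definition \ref{def:birgroup}.

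For (a), the case of $r_2$ is exactly Lemma \ref{lem:r2}. For the linear involutions $r_1,r_3,\tau$ of Proposition \ref{prop:birmaps}, I would use Table \ref{tab:20} together with the $W\rtimes\Z/2\Z$-action on boundary divisors described in Remark \ref{rem:actFO} to check that each of them permutes (setwise) the collection of $q$-centers from Definition \ref{def:X'}, namely $\{A_0,A_1,B_2,B_3,C_{01},D_{01},D_{23},D_{02},D_{03},D_{12},D_{13}\}\subset\P^3$. For instance, $r_1$ exchanges $A_0\leftrightarrow A_1$, $D_{02}\leftrightarrow D_{12}$ and $D_{03}\leftrightarrow D_{13}$, while fixing $B_2,B_3,C_{01},D_{01},D_{23}$; similarly $\tau$ sends $\{A_0,A_1\}\leftrightarrow\{B_2,B_3\}$ and $D_{01}\leftrightarrow D_{23}$, etc. Combined with the flexibility in the order of the blowups afforded by Remark \ref{rem:ordering}, the universal property of the blowup then produces a unique lift of each such linear involution to an automorphism of $X'$ commuting with $q$.

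Claims (b) and (c) are then essentially formal. Because $X'$ is a smooth separated variety birational to $\P^3$ via $q$, the natural restriction map $\Aut(X')\to\Bir(X')\simeq\Bir(\P^3)$ is an injective group homomorphism. Consequently, any relation holding among $r_1,r_2,r_3,\tau$ in $\FO\subset\Bir(\P^3)$ automatically lifts to a relation among their extensions in $\Aut(X')$; so the assignment on generators extends to a well-defined homomorphism $\FO\to\Aut(X')$ whose composition with the restriction to $\Bir(\P^3)$ recovers the original inclusion $\FO\hookrightarrow\Bir(\P^3)$, proving both well-definedness and injectivity.

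For (d), the $p$-equivariance is essentially tautological: Construction \ref{const:birmaps} manufactured the generators of $\FO\subset\Bir(\P^3)$ precisely as the conjugates $p\circ\sigma\circ p^{-1}$ of the birational automorphisms $\sigma$ of $X_e$ from Definition \ref{def:birgroup}. For the equivariance of $p_c\circ p'$, the case of $r_2$ is again Lemma \ref{lem:r2}; for $r_1,r_3,\tau$, the center $C_{01}$ of $p_c$ is the unique boundary divisor of type $C$ that is a point, hence is $\FO$-fixed (the $\FO$-action preserves the types $A,B,C,D$), so $p_c$ is $\FO$-equivariant, and the centers of the further blowups composing $p'\colon X'\to Y$ are visibly permuted among themselves by $\FO$ by the same type-preservation argument. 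The main bookkeeping subtlety of the proof will lie in (d), namely in matching the abstractly-defined birational action on $X_e$ from Definition \ref{def:birgroup} with the action inherited from the linear action on $\P^3$ via $p$; however, this identification is built into the definitions by Construction \ref{const:birmaps} and so reduces to checking that the universal property of the blowups in the factorization of Lemma \ref{lem:facto} is respected by the $\FO$-action.
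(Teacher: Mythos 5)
Your proposal is correct and follows essentially the same route as the paper: the paper extends $r_1,r_3,\tau$ to $\Aut(X')$ by the universal property of the blowup (they permute the centers listed in Definition \ref{def:X'}, with Remark \ref{rem:ordering} absorbing the reordering needed for $\tau$), handles $r_2$ via Lemma \ref{lem:r2}, and deduces the corollary, so your points (b)--(d) merely make explicit what the paper leaves implicit. The one slip is the parenthetical claim that $C_{01}$ is ``$\FO$-fixed'': it is fixed only by $r_1,r_3,\tau$, while it is an indeterminacy point of $r_2$ (which sends the divisor $C_{01}$ to $C_{02}$, by Remark \ref{rem:actFO}); since you invoke this only for the linear generators, the argument still stands.
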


For every $\sigma\in W\subset \FO$, we will denote by $\sigma:X'\to X'$ the corresponding automorphism of $X'$, and set $$\pi'_{\sigma}:=\phi_\sigma\circ (p_c\circ p')\circ \sigma^{-1}:X'\to X_\sigma,$$ so that 
for every $\sigma,w\in W$ we get a commutative diagram:
\begin{equation}\label{eq:phiw2}
\xymatrix@C=30pt{X'\ar[r]^{\sigma}\ar[d]_{\pi'_{w}}&X'\ar[d]^{\pi'_{\sigma w}}\\ X_w\ar[r]_{\phi_\sigma}&X_{\sigma w}}
\end{equation}
\begin{remark}\label{rem:XisoX'}
By construction, the exceptional locus of $\pi'_\sigma$ is the image via $\sigma$ of the exceptional locus of $p_c\circ p'$, i.e., the union of the divisors  $A_0,B_3,C_{01}, D_{ij}\neq D_{12} \subset X'$. The images of these divisors via $q:X'\to \P^3$ are, by construction, the linear subspaces $A_{\sigma(0)}$, $B_{\sigma(3)}$, $C_{\sigma(0)\sigma(1)}$, $D_{\sigma(i)\sigma(j)}\subset \P^3$, $(i,j)\neq(1,2)$.  
\end{remark}

We may now prove the main statement of the section:

\begin{proof}[Proof of Proposition \ref{prop:XisoX'}]
By Formula (\ref{eq:defsigma}), we have that $\pi_\sigma\circ(\pi_e)^{-1}$ is equal to $\phi_\sigma\circ\sigma^{-1}$ (as birational maps $X_e\dashrightarrow X_\sigma$) for every $\sigma \in W$. On the other hand, Corollary \ref{cor:FOautom} tells us that:
$$\phi_\sigma\circ\sigma^{-1}=\phi_\sigma\circ (p_c\circ p')\circ \sigma^{-1}\circ (p_c\circ p')^{-1},$$
where, on the right hand side, $\sigma^{-1}$ denotes the corresponding automorphism of $X'$. Finally, by the definition of $\pi'_e$ and $\pi'_\sigma$, we conclude that the above map equals $\pi'_\sigma\circ(\pi'_e)^{-1}$, that is, we have a 
commutative diagram:
$$
\xymatrix@C=40pt@R=6pt{&&X_e\ar@{-->}[dd]^{\phi_\sigma\circ\sigma^{-1}}\\
X'\ar@/_5pt/[drr]_{\pi'_\sigma}\ar@/^5pt/[urr]^{\pi'_e}&X\ar[dr]^{\pi_\sigma}\ar[ur]_{\pi_e}&\\
&&X_{\sigma}}
$$ 
By Proposition \ref{prop:invlim}, $X$ is isomorphic to the normalization $\ol{X}$ of its image into the product $\prod_{\sigma\in W}X_\sigma$, hence we have a morphism $\psi:X'\to X$, that commutes with the arrows of the diagram above.

The varieties $X,X'$ are normal and $\psi$ is, by construction, birational; thus, in order to prove that $\psi$ is an isomorphism, it is enough to show that it is finite. If this were not the case, there would be a curve $C\subset X'$ contracted by $\pi'_\sigma$, for every $\sigma \in W$. We conclude by observing, in light of Remark \ref{rem:XisoX'}, that the intersection of the exceptional loci of the morphisms $\pi'_\sigma$, $\sigma \in W$, is empty.
\end{proof}

\section{Anticanonical divisor of the tile threefold}\label{sec:antican}

In this section we study the boundary divisors in $X$, and describe their intersections. As a consequence, we show that $X$ is a weak Fano threefold, completing the proof of Theorem \ref{thm:main}.

\begin{notation}\label{not:birgeomX}
	In the sequel we will identify the Chow quotient $X$ with the tile $3$-fold $X'$, and decompose the birational map $q: X\to \P^3$ of Definition \ref{def:X'} as:
	 \[
	 \xymatrix@C=35pt{X\ar[r]^{q_d}\ar@/_10pt/[rrrr]_q&X_{c}\ar[r]^{q_c}&X_b\ar[r]^{q_b}&X_a\ar[r]^{q_a}&\P^3}
	 \]
	where $q_a,q_b,q_c$ and $q_d$ denote, respectively, the blowup of $\P^3$ along $A_0 \sqcup A_1$,  the blowup of $X_a$ along the strict transform of $B_2\sqcup B_3$,  the blowup of $X_b$ along $C_{01}$, and the blowup of $X_c$ along the strict transform of $D_{01}, D_{23}$ and the inverse images of the remaining $D_{ij}$.
\end{notation}

\subsection{Geometry of the boundary divisors}\label{ssec:boundarydivisors}

The set of boundary divisors contains all the exceptional divisors of $q$ and the strict transforms of some planes in $\P^3$, so it generates  $\Pic(X)$, which has rank $12$. 

We compute the pullback to $X$ of the ample generator $H$ of $\Pic(\PP^3)$ considering the total transform of the eight planes appearing in Table \ref{tab:20}, obtaining some effective divisors linearly equivalent to $q^*H$ (see Table \ref{tab:rels}). 

\begin{table}[h!!]
\begingroup
\renewcommand*{\arraystretch}{1.1}

\begin{tabular}{|C||C|}
\hline
\text{Plane} &q^*H\\
\hline\hline
A_2 & A_2+B_2+  C_{01} + D_{02}+D_{12}+D_{23}\\
\hline
A_3 & A_3+B_3+  C_{01}+D_{03} + D_{13}+D_{23}\\
\hline
B_0 & A_0+B_0+  C_{01}+D_{01} + D_{02}+D_{03}\\
\hline
B_1 &A_1+B_1+  C_{01}+D_{01} + D_{12}+D_{13}\\
\hline
C_{02} & A_1+B_2+  C_{02}+D_{02} + D_{12}+D_{13}\\
\hline
C_{12} & A_0+B_2+  C_{12}+D_{02} + D_{12}+D_{03}\\
\hline
C_{13} & A_0+B_3+  C_{13}+D_{02} + D_{03}+D_{13}\\
\hline
C_{03} & A_1+B_3+  C_{03} + D_{03}+D_{12}+D_{13}\\
\hline 
\end{tabular}\par\medskip
\caption{Eight linear equivalence relations in $X$.\label{tab:rels}}
\vspace{-20pt}
\endgroup
\end{table}

Moreover, considering the total transform of the quadric $C_{23}$, we obtain
\[
2q^*H=A_0+A_1+B_2+B_3 + C_{01} + C_{23} +D_{01} + D_{02}+ D_{03}+D_{12} + D_{13}+D_{23}.
\]

We checked that, using these expressions of $q^*H$ and $2q^*H$ we get $8$ independent relation among the $20$ boundary divisors on $X$. Since the Picard number of $X$ is twelve, and the boundary  divisors are generators of $\Pic(X)$, the Picard group is the quotient of the free abelian group generated by the boundary divisors by the relations described above.

We will now describe the boundary divisors and their mutual intersections. In view of the $\FO$-action (see Remark \ref{rem:actFO}), it is enough to describe one divisor of type $A$, one of type $C$, and one of type $D$. 

\medskip

\noindent{\bf (Description of $\mathbf {A_0}$).} 
After the blowup $q_a$, $A_0$ is isomorphic to $\PP^1 \times \PP^1$. Two fibers of $q_a$ map to $D_{02}$ and $D_{03}$, and another one to a point of $B_1$. The planes $B_0,C_{12}, C_{13}$  meet $A_0$ in minimal sections of $q_a|_{A_0}$, while the quadric $C_{23}$ intersects $A_0$ in a conic. 
We then have to blowup the points $B_2$, $B_3$ and $D_{01}$, obtaining a del Pezzo surface of degree five. The incidence graph of its $(-1)$-curves is the {\em Petersen graph}, that we have represented in Figure  \ref{fig:A01} (we have chosen an $S_3$-symmetric representation to highlight the action of the stabilizer of $A_0$ in $W$).

\begin{center}
\begin{figure}[h]
\includegraphics[width=12cm]{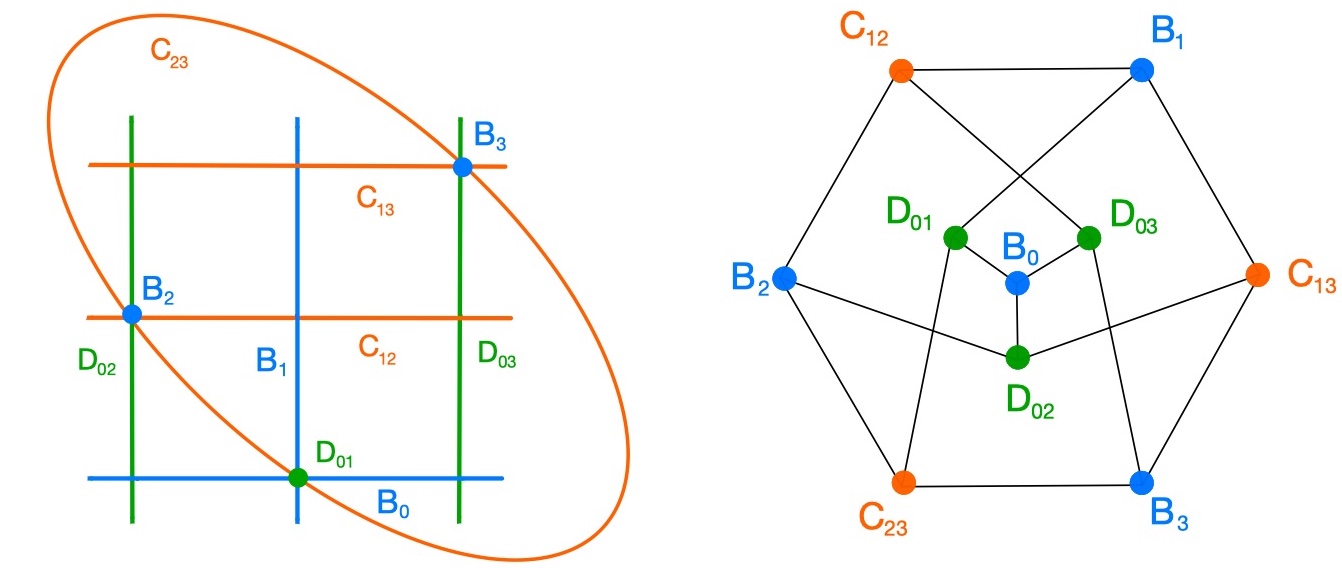} 
\caption{The divisor $A_0\subset X_a$ and the graph of $(-1)$-curves of the boundary divisor $A_0\subset X$.\label{fig:Petersen}}
\label{fig:A01}
\end{figure}
\end{center}

\noindent{\bf (Description of $\mathbf {C_{23}}$).}  
The only blowup which is not an isomorphism when restricted to $C_{23}$ is the blowup of the point $C_{01}$, hence $C_{23}$ is the blowup of $\PP^1 \times \PP^1$ at a point. Since $C_{23}$ contains the lines $A_0,A_1,B_2, B_3, D_{01}$ and $D_{23}$, the corresponding exceptional divisors meet $C_{23}$, while the other boundary divisors do not meet it. 

\medskip

\noindent{\bf (Description of $\mathbf {D_{01}}$).} 
This divisor is isomorphic to $\PP^1 \times \PP^1$. In $\PP^3$ the line $D_{01}$ meets $A_0, A_1$ and $C_{01}$ at a point, so the corresponding divisors meet $D_{01}$ in fibers of $q_d$. On the other hand 
the divisors $B_0,B_1$ and $C_{23}$ contain the line $D_{01}$, so their strict transforms meet $D_{01}$ in minimal sections of $q_d|_{D_{01}}$. We have represented $C_{23}$ and $D_{01}$  in Figure \ref{fig:CD}.
\begin{center}
\begin{figure}[h]
\includegraphics[width=11
cm]{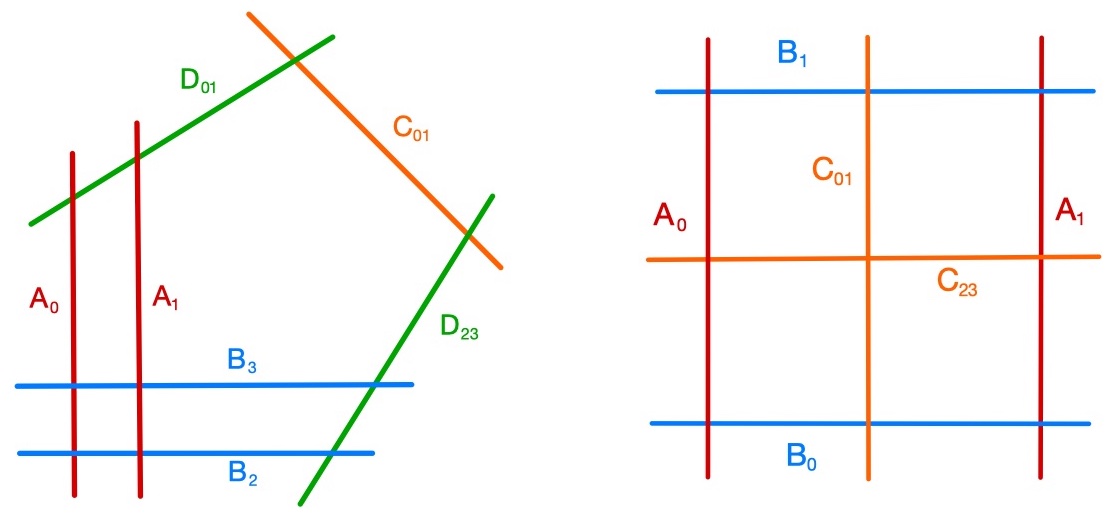}
\caption{The divisors $C_{23}$ and $D_{01}$.}
\label{fig:CD}
\end{figure}
\end{center}

\subsection{Intersection numbers}\label{ssec:intnumbers}
 The goal of this section is to use the descriptions  presented in Section \ref{ssec:boundarydivisors} to compute the intersection numbers of the form $E \cdot F \cdot G$, where $E$, $F$ and $G$ are boundary divisors. Taking into account the action of the group $\FO$ (described in Remark \ref{rem:actFO}), the information necessary to compute these numbers is contained in the following statement:

\begin{proposition}\label{prop:intnumbers}
The intersection numbers $E \cdot F \cdot G$ of triplets of boundary divisors is determined, up to the action of $\FO$, by the following rules:
\begin{itemize}[leftmargin=\xx pt]
\item[1.] Intersection numbers $E\cdot F\cdot C_{23}$, with $E,F\neq C_{23}$, are zero, with the following exceptions: 
\begin{center}
\setlength{\tabcolsep}{2pt}
\begin{tabular}{|C||C|C|C|C|C|C|C|C|C|C|C|C|C|C|C|}
\hline 
E& \,\,A_0\,\,  &  \,\,A_0\,\,  &  \,\,A_1\,\,  &  \,\,A_1\,\,  &  \,\,A_0\,\,  &  \,\,A_1\,\,  & \,\,B_2 \,\,& \,\,B_3\,\, & \,D_{01}\, & \,D_{23}\, &\,C_{01} \,& \,D_{01}\, & \,D_{23}\, \\
\hline
F&B_2 & B_3 & B_2 & B_3 & D_{01} &  D_{01} &  D_{23} &  D_{23} & C_{01} & C_{01} &C_{01} & D_{01} & D_{23} \\
\hline\hline
E\cdot F\cdot C_{23}&1&1&1&1&1&1&1&1&1& 1&-1&-1&-1\\
\hline
\end{tabular}
\end{center}
\medskip
\item[2.] Intersection numbers $E\cdot F\cdot D_{01}$, with $E,F\neq D_{01}$, are zero, with the 
exceptions:
\begin{center}
\setlength{\tabcolsep}{2pt}
\begin{tabular}{|C||C|C|C|C|C|C|C|C|C|}
\hline
E& \,\,A_0\,\,  & \,\,A_0\,\, &  \,\,A_1\,\,  &  \,\,A_1\,\,  &  \,\,A_0\,\,  &  \,\,A_1\,\,  &  \,\,B_0\,\, &  \,\,B_1\,\,  & \,C_{01}\,   \\
\hline
F&B_0 & B_1 & B_0 & B_1 & C_{23} &  C_{23} &  C_{01} &  C_{01} & C_{23}  \\
\hline\hline
E\cdot F\cdot D_{01} &1&1&1&1&1&1&1&1& 1\\
\hline
\end{tabular}
\end{center}
\medskip
\item[3.] Intersection numbers $E\cdot F\cdot A_0$, with $E,F\neq A_0$, are zero,  with the following exceptions:
\medskip
\begin{itemize}
\item[3.1.] $E^2\cdot A_0=-1$ if $E$ is a node of the Petersen graph in Figure \ref{fig:Petersen}, and
\item[3.2.] $E\cdot F\cdot A_0=1$ if $E,F$ are two connected nodes of the Petersen graph.
\end{itemize}
\medskip
\item[4.] $A_i^3=B_j^3=0$, $C_{ij}^3=1$, $D_{ij}^3=2$, for every $i,j$.
\end{itemize}

\end{proposition}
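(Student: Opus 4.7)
The group $\FO$ acts on $X$ by automorphisms (Corollary \ref{cor:FOautom}) and permutes the boundary divisors compatibly with Remark \ref{rem:actFO}, so triple intersection numbers depend only on $\FO$-orbits of triples. The boundary divisors split into three $\FO$-orbits, namely type $A\cup B$ (linked by $\tau$), type $C$, and type $D$, on each of which $\FO$ acts transitively; consequently every triple is $\FO$-equivalent to one whose third entry lies in $\{A_0, C_{23}, D_{01}\}$, and the proposition's four parts together exhaust the corresponding list of orbit representatives.

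For parts 1--3 the strategy is to compute $E\cdot F\cdot G$ as the two-dimensional intersection $(E|_G)\cdot(F|_G)$ on the smooth surface $G\in\{C_{23}, D_{01}, A_0\}$, using the descriptions obtained in Section \ref{ssec:boundarydivisors}. For $G = C_{23}\cong \Bl_{C_{01}}(\P^1\times\P^1)$, Table \ref{tab:20} restricts the divisors meeting $C_{23}$ to those lying on or meeting the quadric $\{x_0x_3=x_1x_2\}$: the six lines of the quadric ($A_0, A_1, B_2, B_3, D_{01}, D_{23}$), which restrict to fibers of the two rulings of the underlying $\P^1\times\P^1$, and the point $C_{01}$, which restricts to the exceptional curve. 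The intersection form on $\Bl_1(\P^1\times\P^1)$ then yields part 1, the $-1$ entries arising from self-intersections of the exceptional $(-1)$-curve. The argument for $G = D_{01}\cong \P^1\times\P^1$ is analogous and easier, since Section \ref{ssec:boundarydivisors} directly identifies $A_0, A_1, C_{01}$ with fibers of one ruling and $B_0, B_1, C_{23}$ with fibers of the other. For $G = A_0$, a degree-$5$ del Pezzo surface, the ten $(-1)$-curves visible in Figure \ref{fig:Petersen} are precisely its intersections with the ten other boundary divisors meeting it, and their mutual intersection matrix is encoded by the Petersen graph, giving rules 3.1 and 3.2.

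For part 4 I would compute each $G^3$ via adjunction on the surface $G$: writing $G|_G = K_G - K_X|_G$ yields $G^3 = K_G^2 - 2\,K_G\cdot(K_X|_G) + (K_X|_G)^2$. Here $K_G^2$ is known from the surface type ($5$ for $A_0$ and $B_j$, $7$ for $C_{ij}$, $8$ for $D_{ij}$), while $K_X|_G$ is extracted from the canonical-class formula $K_X = q^*K_{\P^3} + A_0 + A_1 + B_2 + B_3 + 2\,C_{01} + \sum_{ij}D_{ij}$, obtained step by step from the blowup factorization $q = q_a\circ q_b\circ q_c\circ q_d$ of Notation \ref{not:birgeomX}. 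Restricting term by term, using parts 1--3 together with the linear equivalences of Table \ref{tab:rels}, then gives all four self-intersections.

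The main obstacle I anticipate is the systematic book-keeping: in parts 1--3 one must identify the restriction of each of the nineteen other boundary divisors to each representative surface, and in part 4 one must track the discrepancy divisors through four successive (and in the last case multi-center) blowups. While each individual step is routine, the combinatorics of strict transforms versus total transforms --- compounded by the fact that several centers of $q_d$ arise as preimages of points in $\P^3$ lying on earlier blowup centers --- can easily generate incidence or sign errors. As remarked in the introduction, the \texttt{SageMath} scripts of \cite{AzulSage} are designed precisely to verify such computations, and I would use them to independently confirm every entry of the proposition.
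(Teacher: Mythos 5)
Your treatment of rules 1--3 is exactly the paper's: reduce via the $\FO$-action to the three representative surfaces $C_{23}$, $D_{01}$, $A_0$, and read off the products as intersection numbers of the restricted curves using the descriptions of Section \ref{ssec:boundarydivisors} (blown-up quadric, $\P^1\times\P^1$ with the two rulings identified, degree-five del Pezzo with the Petersen graph). Your orbit analysis and the observation that any triple with at least two distinct factors can be normalized so that the repeated-at-most-twice slot is $A_0$, $C_{23}$ or $D_{01}$ (with $\tau$ handling type $B$) likewise matches the paper. Where you diverge is rule 4: the paper disposes of the top self-intersections purely by linear algebra, using Table \ref{tab:rels} to replace one factor of $E^3$ by a linearly equivalent combination of \emph{other} boundary divisors, e.g.\ $A_1^3=(A_3+C_{12}-C_{23}-D_{01}+D_{03})\cdot A_1^2=0$, so that everything reduces to the mixed products of rules 1--3; you instead use adjunction, $G^3=(K_G-K_X|_G)^2$, which additionally exploits the known value of $K_G^2$ for each surface type. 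Both reductions work and cost about the same; if you go the adjunction route you should pick, for each $G$, a representative of $-K_X$ from (\ref{eq:multican1})--(\ref{eq:multican2}) that does not contain $G$, otherwise $(K_X|_G)^2$ and $K_G\cdot(K_X|_G)$ reintroduce $G^3$ (the resulting linear equation is still solvable, but needlessly so). One concrete slip: your discrepancy formula $K_X=q^*K_{\P^3}+A_0+A_1+B_2+B_3+2C_{01}+\sum_{ij}D_{ij}$ is wrong as written --- the divisors $D_{02},D_{03},D_{12},D_{13}$, which lie over points of $\P^3$, carry coefficient $2$, as in equation (\ref{eq:antican}); with coefficient $1$ your values of $C_{ij}^3$ and $D_{ij}^3$ would come out wrong, so this needs to be corrected (your proposed step-by-step derivation through the blowup tower, or the {\tt SageMath} check, would catch it).
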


 \begin{proof}
 Rules 1, 2 and 3  follow from the description of the divisors $C_{23},D_{01},A_0$  presented in Section \ref{ssec:boundarydivisors}. By means of the action of $W$ we may compute all the intersection numbers  of boundary divisors
$E \cdot F \cdot G$, with $G=A_{\sigma(i)},C_{\sigma(i)\sigma(j)},D_{\sigma(i)\sigma(j)}$, in which at least two of the factors are different; furthermore, by applying $\tau$, we may also obtain the numbers in the case $G=B_{\sigma(j)}$.  
 
We are left with the case of the top self-intersections of boundary divisors. In this case we can use the linear equivalence relations obtained from Table \ref{tab:rels} to write every $E^3$ as a a cube-free formula. For instance,
\[A_1^3=(A_3+C_{12}-C_{23}-D_{01}+D_{03})\cdot A_1^2 = 0. 
\]
In this way we obtain the formulae listed in rule 4.
 \end{proof}

\subsection{The anticanonical divisor of $X$}\label{ssec:antican}

As a consequence of our study of the geometry of the boundary divisors in $X$ and their intersections, we will show in this section that the anticanonical divisor of $X$ is nef and big; in particular, $X$ is a Mori Dream Space. Moreover, $-K_X$ is globally generated, and its anticanonical model is a $3$-fold of degree $12$ in $\P^{13}$. 
Let us start with the following: 

\begin{lemma}\label{lem:nef} An effective divisor $D$ in $X$ which is a linear combination with nonnegative coefficients of the boundary divisors is nef if and only if its restriction to every boundary divisor is nef.
\end{lemma}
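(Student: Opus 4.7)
The forward implication is trivial: if $D$ is nef on $X$, then for any subvariety $E\subset X$ the restriction $D|_E$ is nef, because pushing forward an effective $1$-cycle on $E$ to $X$ preserves nonnegativity of intersection with $D$.

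For the converse, I would argue by intersecting $D$ with an arbitrary irreducible curve $C\subset X$ and splitting into two cases according to whether $C$ is contained in the support of some boundary divisor or not. Write $D=\sum_i a_i E_i$ with $a_i\geq 0$ and each $E_i$ a boundary divisor.

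\emph{Case 1: $C$ is not contained in any boundary divisor $E_i$.} Then for every $i$ with $a_i>0$, the curve $C$ meets the effective divisor $E_i$ properly, so $E_i\cdot C\geq 0$. Consequently $D\cdot C=\sum_i a_i\,(E_i\cdot C)\geq 0$.

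\emph{Case 2: $C$ is contained in some boundary divisor $E$.} Then $D\cdot C=(D|_E)\cdot C$ computed on $E$, and this is nonnegative by the hypothesis that $D|_E$ is nef. Since every irreducible curve falls into one of these two cases, $D\cdot C\geq 0$ for all $C$, i.e.\ $D$ is nef.

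There is essentially no obstacle: the argument is the standard ``write $D$ as a nonnegative combination of effective divisors and test against curves in / out of the support'' trick. The only subtle point is the need for the hypothesis that $D$ has nonnegative coefficients on the boundary divisors, which is what makes Case 1 work; without it, a curve disjoint from the boundary could have negative intersection with $D$.
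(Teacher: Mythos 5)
Your proof is correct and is essentially the paper's argument: the paper phrases it contrapositively (if $D\cdot\Gamma<0$ then some boundary divisor in the support of $D$ is negative on $\Gamma$ and hence contains $\Gamma$, contradicting nefness of the restriction), while you phrase it as a direct case split on whether the test curve lies in a boundary divisor, but the key observation is the same in both.
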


\begin{proof} If there exists an effective curve $\Gamma$ such that $D \cdot \Gamma <0$, then at least one of the boundary divisors is negative on $\Gamma$, hence it contains it.
\end{proof}

Using the description of $q$ as a sequence of blowups with smooth centers (Notation \ref{not:birgeomX}), we can write the anticanonical divisor as
\begin{equation}\label{eq:antican}
-K_X = 4 q^*H -(A_0+A_1+B_2+B_3 + D_{01} +D_{23}+2(D_{02}+D_{03}+D_{12}+D_{13}+C_{01})),
\end{equation}
which, writing $4 q^*H$ as the total transform of $A_2+A_3+B_0+B_1$ and using the linear relations presented in Table \ref{tab:rels}, 
can be rewritten as:
\begin{equation}\label{eq:can1}
-K_X =  A_2+A_3+B_0+B_1 + 2C_{01} +D_{01} +D_{23}.
\end{equation}
Moreover, if we write $4 q^*H$ as the total transform of $A_2+A_3+C_{23}$ we get: 
\begin{equation}\label{eq:can2}
-K_X =  A_2 + A_3 + B_2 + B_3 + C_{01} + C_{23} + 2D_{23}.
\end{equation}

We are now ready to prove the following statement, that constitutes the second part of Theorem \ref{thm:main}:

\begin{proposition}\label{prop:MDS} The Chow quotient $X$ is weak Fano.  
In particular it is a Mori Dream Space.
\end{proposition}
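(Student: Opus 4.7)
The plan is to establish the two defining conditions of weak Fano, namely that $-K_X$ is nef and big; the Mori Dream Space conclusion will then follow from the general fact that smooth projective varieties with nef and big anticanonical divisor are log Fano, hence MDS by BCHM-type results.

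For nefness, I would apply Lemma \ref{lem:nef} to the effective expression of $-K_X$ in equation (\ref{eq:can1}), which is a nonnegative integer combination of boundary divisors. It then suffices to verify that the restriction of $-K_X$ to each boundary divisor is nef on the corresponding surface. Using the action of $\FO$ described in Remark \ref{rem:actFO}, which permutes the boundary divisors and fixes the class of $-K_X$, one reduces to checking one representative of each orbit: one divisor of type $A$ (from which type $B$ follows via $\tau$), one of type $C$, and one of type $D$. Exploiting the explicit descriptions of $A_0$, $C_{23}$, $D_{01}$ from Section \ref{ssec:boundarydivisors}, each such surface has a small finite set of Mori-cone generators (the ten $(-1)$-curves of the Petersen graph in the del Pezzo case of $A_0$; the fibers and minimal sections of the two rulings together with the exceptional $(-1)$-curve in the blown-up $\P^1\times\P^1$ case of $C_{23}$; the two rulings of $\P^1\times \P^1$ in the case of $D_{01}$), and it suffices to check, via Proposition \ref{prop:intnumbers}, that $-K_X$ has nonnegative intersection with each one.

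For bigness, once $-K_X$ is known to be nef, it is enough to verify $(-K_X)^3 > 0$. Using the expression (\ref{eq:can2}) and the intersection rules of Proposition \ref{prop:intnumbers}, one expands this top self-intersection into a sum of triple intersections of boundary divisors and computes directly. In light of the claim in Section \ref{ssec:antican} that the anticanonical model has degree $12$ in $\P^{13}$, the expected value is $(-K_X)^3 = 12$, comfortably positive.

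The main technical obstacle will be the combinatorial bookkeeping in the nefness step, particularly on $A_0$: one must identify how each of the other boundary divisors meeting $A_0$ cuts out a curve on it, recognize those curves among the ten $(-1)$-curves whose incidences are encoded by the Petersen graph, assemble the restriction of $-K_X$ as a class in $\Pic(A_0)$ from formula (\ref{eq:can1}), and then check nefness against each $(-1)$-curve using the intersection data. The symmetry furnished by the stabilizer of $A_0$ in $\FO$ should drastically reduce the casework, and once $A_0$ is settled the analogous verifications on $C_{23}$ and $D_{01}$ are significantly lighter, as those surfaces have Picard rank at most $3$ and a simpler cone structure.
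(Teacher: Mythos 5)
Your proposal is correct and follows essentially the same route as the paper: nefness via Lemma \ref{lem:nef} applied to the expression (\ref{eq:can1}), reduction to one boundary divisor of each type by the $\FO$-action, and bigness via $(-K_X)^3=12$. The only minor difference is in the nefness check on each boundary divisor: where you propose testing $-K_X$ against all extremal curves of the surface (e.g.\ the ten $(-1)$-curves of $A_0$), the paper shortcuts this by observing that the restriction, e.g.\ $-K_X|_{A_0}=(B_0+B_1+D_{01})|_{A_0}$, is an effective divisor with nonnegative intersection with each irreducible component of its own support --- both verifications are valid.
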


\begin{proof}
We recall first that smooth weak Fano varieties are known to be MDS's (cf. \cite[Corollary 1.3.2]{BCHM}, see also \cite[Example~3.2]{Castravet}), and that, by definition, $X$ is weak Fano if $-K_X$ is nef and big. 

In view of Lemma \ref{lem:nef}, to prove the nefness of $-K_X$ it is enough to check that $-K_X$ restricted to the boundary divisors is nef. We will use the expression of $-K_X$ given in formula (\ref{eq:can1}). Taking into account the action of $\FO$, it is enough to check nefness on one divisor of type $A$, one of type $C$ and one of type $D$; we will consider $A_0, C_{12}$, and $D_{12}$
 
As observed in Section \ref{ssec:boundarydiv} $A_0$ does not meet $A_2, A_3, C_{01}$ and $D_{23}$, hence
\[
-K_X|_{A_{0}} = (B_0+B_1+D_{01})|_{A_0}, 
\]
and this divisor is nef, because it is effective and has nonnegative intersection with each of the irreducible components of its support.  
Analogously we have:
\[
-K_X|_{C_{12}} = (A_3+B_1)|_{C_{12}}. 
\]
which is nef because $A_3|_{C_{12}}$ and $B_1|_{C_{12}}$ are irreducible, effective and not exceptional in $C_{12}$. 
Finally, we have: 
\[
-K_X|_{D_{12}} = (A_2+B_1)|_{D_{12}}, 
\]
and we conclude as in case of $C_{12}$.

Finally, since $-K_X$ is nef, to check its bigness it  is enough to check that $(-K_X)^3>0$. We have computed this number with  {\tt SageMath}, starting from formula (\ref{eq:can1}), and using the intersection numbers formulae of Proposition \ref{prop:intnumbers}:
 \[
 (-K_X)^3 =12.\qedhere
 \]
\end{proof}

\begin{proposition}\label{prop:anticansys} The anticanonical divisor of $X$ is globally generated, and the anticanonical model of $X$ is a  threefold of degree $12$ in $\P^{8}$.
\end{proposition}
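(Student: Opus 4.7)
The plan splits into three parts: base-point freeness of $-K_X$, the count of sections $h^0(X,-K_X)$, and the geometric identification of the anticanonical image.

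First I would show that $|-K_X|$ is base-point-free. Since $-K_X$ is nef by Proposition \ref{prop:MDS}, the base locus $\Bs|-K_X|$ is a proper closed subset of $X$. Two explicit effective representatives are available from formulas (\ref{eq:can1}) and (\ref{eq:can2}); further representatives are obtained by translation under the $\FO$-action, which preserves $-K_X$ (cf.\ Remark \ref{rem:actFO}). Using the description of boundary divisors and their mutual intersections from Section \ref{ssec:boundarydivisors}, together with Proposition \ref{prop:intnumbers}, one checks (aided by \texttt{SageMath}) that a finite collection of such $\FO$-translates has empty common support, whence $\Bs|-K_X|=\emptyset$.

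For the dimension count, Kawamata--Viehweg vanishing applied to $-K_X=K_X+(-2K_X)$, with $-2K_X$ nef and big (Proposition \ref{prop:MDS}), yields $h^i(X,-K_X)=0$ for $i>0$, so $h^0(-K_X)=\chi(X,-K_X)$. I would then compute the Euler characteristic by Hirzebruch--Riemann--Roch on the smooth threefold $X$, with inputs $(-K_X)^3=12$ (end of Section \ref{ssec:antican}) and $(-K_X)\cdot c_2(X)=24\,\chi(\cO_X)=24$; the latter identity follows from the Todd genus formula on threefolds combined with the rationality of $X$, which gives $\chi(\cO_X)=1$.

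Finally, base-point freeness defines the anticanonical morphism $\phi_{|-K_X|}:X\to \P(H^0(X,-K_X)^\vee)$; since $-K_X$ is nef, big, and semiample, this is a birational morphism onto a normal projective threefold $Y$, which coincides with the anticanonical model $\Proj\bigoplus_{m\geq 0}H^0(X,-mK_X)$. Birationality yields $\deg Y=(-K_X)^3=12$, while the ambient projective space has dimension $h^0(-K_X)-1$ as computed in the previous step. The main obstacle is the combinatorial verification of base-point freeness: producing enough $\FO$-translates of (\ref{eq:can1}) and (\ref{eq:can2}) whose supports jointly avoid every point of $X$ is tedious and is most naturally handled with computer algebra, while the remaining steps reduce to standard vanishing and Riemann--Roch inputs.
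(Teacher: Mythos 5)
Your strategy for base-point freeness is genuinely different from the paper's: the paper descends to $\P^3$, identifies $|-K_X|$ with a system of quartics containing the six lines, uses the member $(x_0x_3-x_1x_2)^2$ and the projective normality of the quadric to show the base locus is scheme-theoretically the union of the six lines, and then invokes the universal property of the blowup. Your criterion (finitely many $\FO$-translates of the boundary-supported representatives (\ref{eq:can1}), (\ref{eq:can2}) with empty common support) is a valid \emph{sufficient} condition, but be aware it is only sufficient: the boundary-supported members form a very special subfamily of $|-K_X|$, so the empty-intersection claim does not follow from base-point freeness and must actually be verified, with particular care at the $19$ anticanonically trivial curves, where every effective representative either contains the curve or misses it entirely.

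The serious problem is in the section-count: you set up Kawamata--Viehweg vanishing and Riemann--Roch correctly but never evaluate the formula. Doing so gives
$$h^0(X,-K_X)=\chi(X,-K_X)=\tfrac{1}{2}(-K_X)^3+\tfrac{1}{8}(-K_X)\cdot c_2(X)=6+3=9,$$
so your argument produces an anticanonical image in $\P^{8}$, not $\P^{13}$. This is not a defect of your method --- vanishing applies because $-2K_X$ is nef and big by Proposition \ref{prop:MDS}, and $\chi(\cO_X)=1$ because $X$ is an iterated blowup of $\P^3$ --- so your computation is in irreconcilable conflict with the statement you are asked to prove. The source of the conflict is visible in the paper's own proof: formula (\ref{eq:antican}) shows that sections of $-K_X$ are quartics that not only contain the six lines but also have multiplicity at least two at the five points $C_{01},D_{02},D_{03},D_{12},D_{13}$ (the coefficient $2$ of those exceptional divisors); containing the two lines through such a point forces multiplicity one, not two, so these are five further independent linear conditions cutting the $14$-dimensional space of quartics through the six lines down to dimension $9$. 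You should have carried the arithmetic to the end and flagged this discrepancy explicitly; as written, your proposal asserts agreement with the target $\P^{13}$ that your own method refutes.
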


\begin{proof}[Proof of Proposition \ref{prop:anticansys}]
Thanks to expression (\ref{eq:antican}), we can identify the linear system $|-K_X|$ with a subsystem of quartics in $\P^3$, namely those passing through the $6$ lines $A_0$, $A_1$, $B_2$, $B_3$, $D_{01},D_{23}\subset \P^3$, and singular at the points $D_{02}$, $D_{03}$ $D_{12}$, $D_{13}$, $C_{01}$ (each one contained in two of the six lines).  With the help of {\tt SageMath}, one can show that such linear system has dimension $9$. 
\[\begin{array}{l}
%
%
(x_1 x_2 - x_0 x_3)^2,\quad
 x_0   x_1   (x_1 - x_3)   (x_2 - x_3),\quad
 x_0   x_2   (x_1 - x_3)   (x_2 - x_3),\\
  x_0   x_1   (x_0 - x_2)   (x_2 - x_3),\quad
 x_0   x_3   (x_0 - x_2)   (x_2 - x_3),\quad
 x_1   x_3   (x_0 - x_2)   (x_2 - x_3),\\
 x_2   x_3   (x_0 - x_1)   (x_1 - x_3),\quad
 x_2   x_3   (x_0 - x_1)   (x_0 - x_2),\quad
 x_1   x_3   (x_0 - x_1)   (x_0 - x_2).

\end{array}
\]

A simple computation shows that, in the subset of $\P^3$ where all above generators are different from zero, the map defined by the linear system is injective; so, in particular the rational map defined by $|-K_X|$ is birational. Let us now show that $|-K_X|$ is also globally generated.

From the above list of generators we immediately see that there are no fixed points outside of the six lines $A_0$, $A_1$, $B_2$, $B_3$, $D_{01},D_{23}\subset \P^3$. Let us denote by $\cI$ the ideal sheaf of $\cO_{\P^3}$ generated by the above polynomials. By the universal property of the blowup, in order to see that the induced rational map from $X$ to $\P^8$ is a morphism it is enough to check that $\cI\otimes\cO_X$ is an invertible sheaf. This fact may be checked locally, at every point of the six lines.

We note now the following facts, that one may check by direct computation:
\begin{itemize}
\item the localization of $\cI$ at a point contained in precisely one of the six lines, is equal to the ideal of the line;
\item the localization of $\cI$ at one of the five singular points is the product of the ideals of the two lines containing it; for instance, the localization of $\cI$ at $D_{03}$ is $(x_1x_2,x_1x_3,x_2x_3,x_3^2)=(x_1,x_3)(x_2,x_3)$. 
\end{itemize}

Since the morphism $p\circ\pi_e:X\to \P^3$ factors by the blowup of $\P^3$ along $A_0$, and also by the blowup of $\P^3$ along $B_3$ (whose ideals are, respectively $(x_1,x_3)$, $(x_2,x_3)$), it follows that $\cI\otimes\cO_X$ is invertible on the inverse image of a neighborhood of $D_{03}$. A similar argument applies to the points $D_{02}$, $D_{12}$, $D_{13}$. Finally, in the case of the point $C_{01}$, we conclude by noting that the morphism $p\circ\pi_e$ factors through the successive blowup of $\P^3$ along the point $C_{01}$ and then the strict transform of the line $D_{01}$, and also by the successive blowup of $\P^3$ along the point $C_{01}$ and then the strict transform of the line $D_{23}$; since the inverse image of the ideal of the two lines by each of these two maps is an invertible sheaf, the conclusion follows.

Summing up, the linear system $|-K_X|$ provides a birational morphism $X\to \P^{8}$ whose image is a $3$-fold of degree $(-K_X)^3=12$. 
\end{proof}

\begin{remark}\label{rem:anticansys}
In the next section we will see that $X$ contains $19$ curves of $-K_X$-degree zero, that generate extremal rays of $\cNE{X}$ whose associated contractions are small. Hence the anticanonical model of $X$ has $19$ non-$\QQ$-factorial singularities. 
\end{remark}

\section{Birational geometry of the tile threefold}\label{sec:birgeomX}

We finish the paper by studying birational properties of $X$; in particular, we describe its Mori, nef and effective cones, $\cNE{X}$, $\Nef(X)$, $\Eff(X)$. Moreover, we portray the 
Chow quotients of the partial flags varieties of  $\C^4$ as contractions of $X$.

\subsection{The cone of curves of $X$}\label{ssec:moricone}

Let us start with the following:  

\begin{lemma}\label{lem:extremal1} Let  $\Gamma \subset X$ be an irreducible curve such that $-K_X\cdot \Gamma\in\{0,1\}$. 
Then $\Gamma$ has negative intersection with a boundary divisor. 
\end{lemma}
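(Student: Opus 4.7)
The plan is to prove the contrapositive: assume that $\Gamma$ has nonnegative intersection with every boundary divisor, and derive a contradiction with $-K_X\cdot\Gamma\in\{0,1\}$. Write $-K_X$ in the form of expression (\ref{eq:can1}), and pull back along the six cosets of $\mathrm{Stab}_{\FO}(C_{01})$ in $\FO$ to produce six different expressions of $-K_X$ as a nonnegative combination of boundary divisors. Each expression has a unique $2C_{ij}$ summand, and the six $C_{ij}$'s obtained this way exhaust all boundary divisors of type $C$; the remaining seven terms in each expression consist of two $A_i$'s, two $B_j$'s, and two $D_{k\ell}$'s, each with coefficient $1$. Since every boundary divisor intersects $\Gamma$ nonnegatively and $-K_X\cdot\Gamma\leq 1$, the coefficient-$2$ summand forces $C_{ij}\cdot\Gamma=0$ for every pair $(i,j)$.

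Next, I would split into two cases. If $-K_X\cdot\Gamma=0$, then every summand of every one of the six expressions must vanish against $\Gamma$, so $\Gamma$ meets every boundary divisor of type $A$, $B$ or $D$ in $0$ as well. Combined with the previous step, $\Gamma$ would have zero intersection with all twenty boundary divisors; since these generate $\Pic(X)_{\R}$ (see the opening of Section \ref{ssec:antican}), $\Gamma$ would be numerically trivial, contradicting $\Gamma\cdot h>0$ for an ample class $h$.

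If $-K_X\cdot\Gamma=1$, each of the six expressions has exactly one coefficient-$1$ summand intersecting $\Gamma$ in $1$. For each possibility $E\cdot \Gamma=1$ with $E$ a boundary divisor of type $A$, $B$ or $D$, I would chase through the six expressions to pin down which other boundary divisors may still meet $\Gamma$; then compute $q^{*}H\cdot\Gamma$ in two different ways using the eight planar relations of Table \ref{tab:rels}, obtaining two distinct values. For instance, in the case $A_0\cdot\Gamma=1$, the three expressions containing $A_0$ force all divisors appearing in them to vanish on $\Gamma$, after which the three expressions containing $B_0$ impose $B_0\cdot\Gamma=1$, and all other $A_i$, $B_j$, $D_{k\ell}$ meet $\Gamma$ in $0$; then the relation $q^{*}H\equiv A_2+B_2+C_{01}+D_{02}+D_{12}+D_{23}$ gives $q^{*}H\cdot\Gamma=0$, while $q^{*}H\equiv A_0+B_0+C_{01}+D_{01}+D_{02}+D_{03}$ gives $q^{*}H\cdot\Gamma=2$, contradiction. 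The $B_j$ case reduces to the $A_i$ case via the involution $\tau$ of Remark \ref{rem:actFO}.

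The main obstacle is the $D$-type case. Since each $D_{ij}$ appears in only two of the six $\FO$-translates of (\ref{eq:can1}), assuming $D_{ij}\cdot\Gamma=1$ does not immediately vanish all other $D$'s; the remaining four expressions instead force a pairing of the four other $D_{k\ell}$'s into two sums equal to $1$. This produces four subcases, each of which I would exclude by the same device: two of the planar relations in Table \ref{tab:rels} compute $q^{*}H\cdot\Gamma$ as distinct integers. Once all three type-cases are ruled out, both values of $-K_X\cdot\Gamma$ are impossible under the contrapositive hypothesis, completing the proof.
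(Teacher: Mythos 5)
Your argument is correct, and its first move coincides with the paper's: translate the expression (\ref{eq:can1}) by the $\FO$-action to obtain the six identities (\ref{eq:multican1}), and use the coefficient-$2$ terms to force $C_{ij}\cdot\Gamma=0$ for all $i,j$. Where you diverge is in the treatment of the type-$D$ divisors. The paper brings in a second $\FO$-orbit of anticanonical expressions, namely the six translates (\ref{eq:multican2}) of (\ref{eq:can2}); every $D_{ij}$ appears with coefficient $2$ in one of them, so $D_{ij}\cdot\Gamma=0$ follows for all $i,j$ at once, and the lemma is then finished by the purely combinatorial observation that every pair $(A_i,A_j)$, $(B_i,B_j)$ and $(A_i,B_j)$ occurs together in at least one of the twelve expressions. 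You instead stay with the single family (\ref{eq:multican1}) and must eliminate the residual configurations by hand: the pair $\{A_i,B_i\}$ (your analysis correctly shows that $A_i\cdot\Gamma=1$ forces $B_i\cdot\Gamma=1$ and kills everything else), and the eight sets of $D$'s obtained by choosing one element from each of $\{D_{01},D_{23}\}$, $\{D_{02},D_{13}\}$, $\{D_{03},D_{12}\}$. Your device of evaluating $q^*H\cdot\Gamma$ against two rows of Table \ref{tab:rels} does dispose of every one of these configurations (I verified that a suitable pair of rows yields incompatible values in each case), so your proof closes; it is simply longer, and the observation that the translates of (\ref{eq:can2}) double every $D_{ij}$ would have dissolved entirely what you flag as the main obstacle. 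A minor slip: each expression in (\ref{eq:multican1}) has six, not seven, coefficient-one summands.
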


\begin{proof} First of all, since the boundary divisors generate the Picard group, they cannot all have intersection number zero with $\Gamma$. To prove the Lemma we will use some expressions of $-K_X$ as positive linear combination of boundary divisors. Starting from formula (\ref{eq:can1}) and applying the group action we get:
\begin{equation}\label{eq:multican1}
\begin{split} - K_X & =  A_2 + A_3 + B_0 + B_1 + 2C_{01} + D_{01} + D_{23}\\
& =A_1 + A_3 + B_0 + B_2 + 2C_{02} + D_{02} + D_{13}\\
 & =A_0 + A_1 + B_2 + B_3 + 2C_{23} + D_{01} + D_{23}\\
 & =A_1 + A_2 + B_0 + B_3 + 2C_{03} + D_{03} + D_{12}\\
 & =A_0 + A_3 + B_1 + B_2 + 2C_{12} + D_{03} + D_{12}\\
& = A_0 + A_2 + B_1 + B_3 + 2C_{13} + D_{02} + D_{13}.
\end{split}
\end{equation}
Doing the same with formula (\ref{eq:can2}) we get:
\begin{equation}\label{eq:multican2}
\begin{split} - K_X &  =A_2 + A_3 + B_2 + B_3 + C_{01} + C_{23} + 2D_{23} \\
&=  A_1 + A_3 + B_1 + B_3 + C_{02} + C_{13} + 2D_{13} \\
& = A_0 + A_2 + B_0 + B_2 + C_{02} + C_{13} + 2D_{02} \\
& =A_0 + A_3 + B_0 + B_3 + C_{03} + C_{12} + 2D_{03} \\
& =A_0 + A_1 + B_0 + B_1 + C_{01} + C_{23} + 2D_{01} \\
& =A_1 + A_2 + B_1 + B_2 + C_{03} + C_{12} + 2D_{12}.
\end{split}
\end{equation}

Note that every boundary divisors appear in at least one of the above expressions of $-K_X$, so if $-K_X\cdot \Gamma=0$, the conclusion is obvious. 

In the case $-K_X \cdot \Gamma =1$, assume for the sake of contradiction that no boundary divisor is negative on $\Gamma$. Then, noting that some coefficients are equal to two in the above expressions, we conclude that $C_{ij}\cdot \Gamma=D_{ij}\cdot\Gamma=0$, for every $i,j$. Since every possible pair $(A_i,A_j)$ (resp. $(B_i,B_j)$), $i\neq j$, appears in at least one expression of $-K_X$, we must have that precisely one $A_i$ and one $B_j$ have non zero intersection number with $\Gamma$. Then we get to a contradiction by noting that every pair $(A_i,B_j)$ appears in at least one of the above expressions.
\end{proof}

\begin{corollary}\label{cor:extremal2} Every extremal ray of $\cNE{X}$ contains the class of a curve contained in a boundary divisor. 
\end{corollary}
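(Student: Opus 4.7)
The plan is to combine the polyhedrality of $\cNE{X}$ (which follows from $X$ being a MDS, Proposition \ref{prop:MDS}) with Lemma \ref{lem:extremal1}. Let $R\subset\cNE{X}$ be an extremal ray. Since $X$ is a smooth projective threefold and $-K_X$ is nef and big, Mori's cone theorem provides a rational curve $\Gamma$ generating $R$, with length $\ell(R)=-K_X\cdot\Gamma\leq\dim X+1=4$. If $\ell(R)\leq 1$, Lemma \ref{lem:extremal1} yields a boundary divisor $E$ with $E\cdot\Gamma<0$, which forces $\Gamma\subset E$, and we are done.

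For $\ell(R)\geq 2$, the plan is to exploit the explicit description of $X$ as an iterated blowup of $\P^3$ (Definition \ref{def:X'}) together with Mori's classification of extremal contractions on smooth threefolds. Since $\rho_X=12\neq 1$, $\varphi_R$ is not a contraction to a point, so the possibilities are: divisorial (length $1$ or $2$), small (length $1$), conic bundle (length $1$), or Del Pezzo fibration (length $\leq 3$). In each case of length $\geq 2$, the plan is to show that the exceptional locus (or general fiber) sits inside the union of boundary divisors: for divisorial contractions, tracking the blowup sequence of Definition \ref{def:X'} shows that the exceptional divisor $E$ must coincide with one of the boundary divisors, so that any line in $E$ generates $R$; a Del Pezzo fibration of length $\geq 2$ can be excluded using the explicit Nef cone of $X$, or by the smallness of the computation $(-K_X)^3=12$.

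The main obstacle is that standard bend-and-break on a smooth threefold only forces a rational curve to degenerate into lower-degree components once its $-K_X$-degree exceeds $\dim X+1=4$, so the intermediate lengths $2$ and $3$ cannot be reduced uniformly to Lemma \ref{lem:extremal1}. The cleanest workaround is to enumerate the extremal rays of $\cNE{X}$ directly from the blowup description in Definition \ref{def:X'} and verify the statement ray by ray, exhibiting each generator as a curve supported on some boundary divisor; this explicit enumeration also underlies Remark \ref{rem:anticansys}, where the $19$ $K$-trivial extremal rays are already singled out.
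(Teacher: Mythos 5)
Your opening move (dispose of $\ell(R)\leq 1$ via Lemma \ref{lem:extremal1}, then analyse the contraction $\varphi_R$ for $\ell(R)\geq 2$) is the same as the paper's, but the case analysis that follows has a genuine gap. You assert that conic bundles have length $1$, and on that basis you never treat them; but a conic bundle with no degenerate fibers is a $\PP^1$-bundle of length $2$, and this is exactly the one nontrivial fiber-type case. The paper handles it head on: if every fiber of $\varphi_R$ is one-dimensional, the fiber locus inequality forces $\varphi_R$ to be of fiber type, minimality of $\Gamma$ rules out degenerate conics so $\varphi_R$ is a $\PP^1$-bundle, and then any boundary divisor with zero intersection against the fiber class (such divisors exist because $-K_X\cdot\Gamma=2$ while $-K_X$ is a positive combination of boundary divisors) must contain fibers. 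Without this case your argument simply does not cover all extremal rays. A second problem is the divisorial case of length $2$: there the exceptional locus is a $\PP^2$ blown down to a smooth point, and your claim that ``tracking the blowup sequence shows $E$ must coincide with a boundary divisor'' cannot be made to work, since no boundary divisor of $X$ is a $\PP^2$ (the $A_i,B_i$ are degree-five del Pezzo surfaces, the $C_{ij}$ are $\PP^1\times\PP^1$ blown up at a point, the $D_{ij}$ are $\PP^1\times\PP^1$). The correct and much softer observation, which the paper uses for \emph{any} contraction with a two-dimensional fiber $F$, is that a boundary divisor positive on $\Gamma$ meets $F$ in a curve numerically proportional to $\Gamma$; that curve lies in the boundary divisor and you are done. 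This also makes your worries about Del Pezzo fibrations moot (an elementary Del Pezzo fibration over a curve would force $\rho_X=2$, impossible since $\rho_X=12$).

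Two further cautions. First, Mori's cone theorem only produces generating rational curves of bounded degree for $K_X$-\emph{negative} rays; since $-K_X$ is merely nef, the $19$ trivial rays are not reached that way. You should instead invoke the polyhedrality of $\cNE{X}$ (from Proposition \ref{prop:MDS}) to get an irreducible curve on each extremal ray and then feed it to Lemma \ref{lem:extremal1}. Second, your proposed ``cleanest workaround'' --- enumerate the extremal rays directly and check them one by one --- is circular in the logic of the paper: the computation of the $31$ rays in Section \ref{ssec:moricone} is carried out precisely by restricting to curves lying in boundary divisors, which is legitimate only \emph{because} of this corollary.
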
 

\begin{proof}
Let $R$ be an extremal ray of $\cNE{X}$ and $\Gamma$ an irreducible curve whose class is a minimal generator of $R$. By Lemma \ref{lem:extremal1} we may restrict ourselves to the case in which $-K_X \cdot \Gamma \ge 2$. 

If the contraction $\varphi_R$ associated with $R$ has a fiber $F$ of dimension two, the boundary divisors which are positive on $\Gamma$ intersect $F$ along a curve, which is numerically proportional to $\Gamma$, and the statement holds.

We can thus assume that all the fibers of the contraction $\varphi_R$ are one dimensional. Then, by the fiber locus inequality (see \cite[Theorem~1.1]{Wis}), $\varphi_R$ is a contraction of fiber type. By the assumption on the minimality of $\Gamma$ in $R$, the contraction $\varphi_R$ is a $\PP^1$-bundle. We conclude observing that, in this situation, every boundary divisor whose intersection number with $\Gamma$ is zero contains fibers of $\varphi_R$.
\end{proof}

\begin{remark}\label{rem:SQMs}
In the proofs of Lemma \ref{lem:extremal1} and Corollary \ref{cor:extremal2} we have used only the properties of the anticanonical bundle $-K_X$ and of the Picard group of $X$, hence the statements are true for every small $\Q$-factorial modification of $X$.
\end{remark}

By Corollary \ref{cor:extremal2}  $\cNE{X}$ is generated by the classes of curves contained in boundary divisors, hence by the classes of curves which are extremal in the cones of curves of the boundary divisors, which are are known from Section \ref{ssec:boundarydivisors}. We have computed with {\tt SageMath} the classes of these curves, and checked that the cone they generate (which is $\cNE{X}$) has $31$ extremal rays ($12$ $K_X$-negative, $19$ $K_X$-trivial), each of them generated by a curve that is the intersection of two boundary divisors. 
We will denote a curve which is the intersection of the boundary divisors $E$ and $F$ by $E\tast F$, in agreement with the notation used in the {\tt SageMath} files.

The $12$ $K_X$-negative rays are the $\FO$-orbit of the ray generated by $A_0\tast D_{01}$. The exceptional locus of each of them is a divisor $D_{ij}$ (each $D_{ij}$ can be contracted in two different ways).
The $19$ $K_X$-trivial extremal rays are small; their exceptional locus is a smooth rational curve which is the intersection of two boundary divisors which contain it as a $(-1)$-curve. In particular the normal bundle of the curve in $X$ is $\cO(-1) \oplus \cO(-1)$, and the flop of the curve is an Atiyah flop. Those curves belong to three different orbits, namely, $12$ of them are the orbit of $A_0 \tast B_1$, $4$ of them are the orbit of $A_0 \tast B_0$ and the last $3$ are the orbit of $C_{01}\tast C_{23}$. The following statement summarizes this information:

\begin{proposition}\label{prop:moricone}
The extremal rays of the Mori cone $\cNE{X}$ of the tile $3$-fold $X$ 
are generated by the classes of $31$ curves, listed below: 
 \end{proposition}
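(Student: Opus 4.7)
By Corollary \ref{cor:extremal2}, every extremal ray of $\cNE{X}$ is generated by the class of an irreducible curve contained in some boundary divisor; moreover, by Remark \ref{rem:actFO}, the action of $\FO$ permutes the boundary divisors, so it is enough to identify the extremal rays of $\cNE{E}$ for one representative $E$ in each $\FO$-orbit of boundary divisors, namely $A_0$, $B_3$, $C_{23}$, $C_{02}$, $D_{01}$ and $D_{12}$, and then saturate the resulting finite set of curve classes under the $\FO$-action. The geometric descriptions of these surfaces given in Section \ref{ssec:boundarydivisors} (the Petersen graph for type $A$/$B$, a one-point blowup of $\P^1\times \P^1$ for $C_{23}$, $\P^1\times \P^1$ for $D_{01}$, and the analogous pictures for $C_{02}$, $D_{12}$) make the extremal rays of $\cNE{E}$ explicit: each one is generated by a curve of the form $E\tast F$, the scheme-theoretic intersection of two boundary divisors.

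The plan is then to compute the numerical class of each such curve $E\tast F$ inside $\Nu(X)$, expressed in the basis dual to a spanning set of $12$ boundary divisors in $\Pic(X)$, via the triple intersection numbers $E\cdot F\cdot G$ provided by Proposition \ref{prop:intnumbers} together with the eight linear relations of Table \ref{tab:rels}. This is a purely combinatorial task, easily handled with \texttt{SageMath} (the script \texttt{Chow\_aux\_2.ipynb} in \cite{AzulSage} performs exactly this); the symmetry under $\FO$ makes it sufficient to carry out the calculation for the orbit representatives above, and then to compute the extremal rays of the convex hull of the resulting $\FO$-orbits of classes. The outcome is the list of $31$ classes appearing in the statement, split as $12+12+4+3$ under the action of $\FO$.

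Once the extremal rays are identified, the classification into $K_X$-negative, small and divisorial contractions follows by computing $-K_X\cdot(E\tast F)$ using the expressions of $-K_X$ collected in \eqref{eq:multican1} and \eqref{eq:multican2}, and by inspecting which boundary divisors contain $E\tast F$ and with which self-intersection number (this distinguishes $\cO(-1)\oplus\cO(-1)$ flopping curves from $(-1)$-curves inside a contractible divisor). Specifically, for the orbit of $A_0\tast D_{01}$ the curve lies inside a $D_{ij}\simeq \P^1\times\P^1$ as a ruling, so $-K_X\cdot(A_0\tast D_{01})=1$ and the contraction is divisorial with exceptional divisor $D_{ij}$; for the orbits of $A_0\tast B_1$, $A_0\tast B_0$, $C_{01}\tast C_{23}$ the intersection numbers with $-K_X$ vanish and the curves appear as $(-1)$-curves in two boundary divisors (readable off the Petersen graph and the pictures of $C_{23},D_{01}$), giving the claimed $\cO(-1)\oplus\cO(-1)$ normal bundle and hence the Atiyah flop structure.

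The main obstacle is certified extremality: writing down curve classes inside each boundary divisor only produces a finite set of candidates, and a priori some of them may be redundant or interior to $\cNE{X}$. This has to be verified by an explicit convex-hull computation on the numerical classes, and it is also here that one must rule out that any additional extremal ray is generated by a curve which is not the intersection of two boundary divisors; Corollary \ref{cor:extremal2} guarantees containment in a single boundary divisor, and the $(-1)$-curve analysis inside each of the six representative surfaces (combined with the $\FO$-action) closes this gap.
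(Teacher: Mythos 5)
Your proposal follows essentially the same route as the paper: reduce via Corollary \ref{cor:extremal2} to curves that are extremal in the Mori cones of the boundary divisors (known explicitly from Section \ref{ssec:boundarydivisors}), compute their numerical classes from Proposition \ref{prop:intnumbers} and the relations of Table \ref{tab:rels}, let {\tt SageMath} extract the extremal rays of the resulting cone, and then sort the $31$ rays into $\FO$-orbits by their $-K_X$-degree and exceptional loci. The only cosmetic difference is that you list six orbit representatives of boundary divisors where three (one each of types $A/B$, $C$, $D$) suffice, since $W$ acts transitively within each type and $\tau$ exchanges $A$ and $B$; this is harmless.
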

\begin{center}
\renewcommand*{\arraystretch}{1.1}
 \begin{tabular}{|c||c|c|}
 \hline
 Type&Exceptional Locus&Ray generators\\\hline\hline
 \multirow{ 2}{*}{Divisorial}&\multirow{2}{*}{$D_{ij}$, $i\neq j$}&$[A_i\tast D_{ij}]=[A_j\tast D_{ij}]$\\\cline{3-3}
 &&$[B_i\tast D_{ij}]=[B_j\tast D_{ij}]$\\\hline
 \multirow{ 3}{*}{Small}&$A_i \tast B_j$, $i \neq j$&$[A_i \tast B_j]$\\\cline{2-3}
 &$A_i \tast B_i$&$[A_i \tast B_i]$\\\cline{2-3}
 &$C_{\sigma(0)\sigma(1)}\tast C_{\sigma(2)\sigma(3)}$, $\sigma\in W$&$[C_{\sigma(0)\sigma(1)}\tast C_{\sigma(2)\sigma(3)}]$\\\hline
 \end{tabular}
 \end{center}\par\medskip

 \begin{remark}\label{rem:fvec}
	Knowing the intersection numbers of these curves with every boundary divisor, we can choose basis of $\NU(X)$ and $\Nu(X)$, find coordinates of the rays, and compute the whole combinatorial structure of $\cNE{X}$ with {\tt SageMath}. The complete $f$-vector of $\cNE{X}$, containing the number of faces of dimension $i$ in the position $i\leq 11$ is the following:
	\begin{equation*}\label{eq:f-vector}
		\big(\,31,\,\, 387,\,\, 2647,\,\, 10942,\,\, 28495,\,\, 47531,\,\, 50616,\,\, 33484,\,\, 12912,\,\, 2544,\,\, 189\,\big).
	\end{equation*}
 \end{remark}

\subsection{The nef cone of $X$}\label{ssec:nefcone}

One may also compute with {\tt SageMath} minimal generators of the $189$ extremal rays of $\Nef(X)$ (corresponding to facets of $\cNE{X}$, see Remark \ref{rem:fvec}), which are supporting divisors of the contractions of $X$ to varieties with Picard number one. 
By means of Proposition \ref{prop:intnumbers}, we may compute the top self-intersection of these generators. 
We obtain that, among the $189$ maximal contractions of $X$, $20$ are of fiber type, and $169$ are birational contractions, with eight possible values of the top self intersection of a minimal supporting divisor $L$: \par  
\medskip
\begin{center}
\begin{tabular}{|l||c|c|c|c|c|c|c|c|c|}
\hline
$L^3$ & $0$ & $1$ & $2$ & $4$ & $5$ & $14$ & $16$ & $18$ & $22$ \\\hline
No. of contractions& $20$ & $6$ & $24$ & $6$ & $48$ & $6$ & $15$ & $16$ & $48$ \\\hline
\end{tabular}
\end{center}\par\medskip

Let us now study in finer detail those $20$ maximal fiber type contractions of $X$. Using again Proposition \ref{prop:intnumbers}, we see that $9$ of these contractions are supported by divisors whose squares are numerically trivial.
Equivalently the targets of these contractions are curves and, since they are necessarily rational, then they are projective lines. 

Among the supporting divisors of these contractions, there is a $\FO$-orbit containing $8$ elements, consisting of the $W$-orbits of:
$$
B_0+C_{01}+D_{01},\qquad A_0+C_{23}+D_{01}.
$$
Thus we may  conclude that the remaining contraction to $\P^1$, which is supported by the divisor 
\begin{equation}\label{eq:ChowGrass}
C_{01} + C_{23} + D_{01} + D_{23},
\end{equation}
is $\FO$-invariant. Summing up:

\begin{proposition}\label{prop:9toP1}
The variety $X$ has exactly $9$ contractions to $\P^1$, supported by the divisors described above. They are classified into two $\FO$-orbits, containing, respectively, $8$ and $1$ contractions.
\end{proposition}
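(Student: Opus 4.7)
The proof proceeds in three steps, exploiting the explicit action of $\FO$ on boundary divisors described in Remark~\ref{rem:actFO} together with the linear equivalences of Table~\ref{tab:rels}.

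\textit{Step 1 (Identifying the $9$ supporting divisors).} From the {\tt SageMath} enumeration of the $189$ minimal supporting divisors of the maximal nef contractions of $X$, the $9$ corresponding to contractions to $\P^1$ are selected by requiring $L^2\equiv 0$ in $\Nu(X)$, imposed using the intersection formulas of Proposition~\ref{prop:intnumbers}. I will verify that these $9$ classes are exactly the $W$-translates of $B_0+C_{01}+D_{01}$ and of $A_0+C_{23}+D_{01}$, together with $C_{01}+C_{23}+D_{01}+D_{23}$.

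\textit{Step 2 (The $\FO$-orbit of size $8$).} The plane relations from Table~\ref{tab:rels} associated with $B_0$, $C_{12}$ and $C_{13}$ give
\[
B_0+C_{01}+D_{01}\equiv B_2+C_{12}+D_{12}\equiv B_3+C_{13}+D_{13}\equiv q^*H-A_0-D_{02}-D_{03},
\]
so the $12$ naïve $W$-translates of $B_0+C_{01}+D_{01}$ collapse into $4$ distinct classes. An entirely analogous argument, using the appropriate $W$-translates of the plane relations, shows that the $W$-orbit of $A_0+C_{23}+D_{01}$ also consists of $4$ distinct classes. Applying Remark~\ref{rem:actFO}, I compute
\[
\tau(B_0+C_{01}+D_{01})=A_{w_0(0)}+C_{01}+D_{w_0(1)w_0(0)}=A_3+C_{01}+D_{23},
\]
which equals $w_0\cdot(A_0+C_{23}+D_{01})$; hence $\tau$ identifies the two $W$-orbits, producing a single $\FO$-orbit of exactly $8$ supporting divisors.

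\textit{Step 3 (The $\FO$-invariant contraction).} Since $9=8+1$, the remaining supporting divisor $C_{01}+C_{23}+D_{01}+D_{23}$ lies in a singleton $\FO$-orbit and is therefore $\FO$-invariant. I will also verify this directly: invariance under $r_1=(01)$, $r_3=(23)$ and $\tau$ is immediate from Remark~\ref{rem:actFO}, using in particular $\tau(C_{01})=C_{01}$, $\tau(C_{23})=C_{23}$ and $\tau(D_{01})=D_{23}$; invariance under $r_2=(12)$ reduces to the linear equivalence
\[
C_{01}+C_{23}+D_{01}+D_{23}\equiv C_{02}+C_{13}+D_{02}+D_{13},
\]
which I will deduce by combining the $C_{02}$ and $C_{13}$ plane relations of Table~\ref{tab:rels} with the quadric relation. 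The principal obstacle is the combinatorial bookkeeping in Step~2: the collapse from $12$ naïve $W$-translates to $4$ distinct classes in $\Pic(X)$ rests on a systematic use of the $8$ independent linear equivalences among boundary divisors, and is most efficiently handled with the {\tt SageMath} scripts employed throughout Section~\ref{sec:birgeomX}.
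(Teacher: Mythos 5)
Your proposal is correct and follows essentially the same route as the paper: the nine supporting divisors are extracted from the {\tt SageMath} list of $189$ minimal generators of $\Nef(X)$ by the condition $L^2\equiv 0$, and the orbit structure is then read off from the $\FO$-action on boundary divisors. The extra details you supply — the collapse of the twelve formal $W$-translates of $B_0+C_{01}+D_{01}$ to four classes via the rows $B_0$, $C_{12}$, $C_{13}$ of Table~\ref{tab:rels}, the identity $\tau(B_0+C_{01}+D_{01})=w_0(A_0+C_{23}+D_{01})$, and the equivalence $C_{01}+C_{23}+D_{01}+D_{23}\equiv C_{02}+C_{13}+D_{02}+D_{13}$ obtained from the $C_{02}$ and $C_{13}$ relations together with the quadric relation — all check out and make explicit what the paper leaves to the computation.
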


Examining the remaining $11$ fiber type contractions (whose image is a surface), we checked that there are three $\FO$-orbits, containing, respectively, $1$, $2$ and $8$ elements. The following divisors support one contraction of each class:
\begin{itemize}[leftmargin=\yy pt]
\item[(1)] $A_0 + B_0 + D_{01} + D_{02} + D_{03}$;
\item[(2)] $A_2 + A_3 + C_{01} + D_{23}$;
\item[(8)] $B_0 + C_{01} + C_{02} + D_{01} + D_{02}$.
\end{itemize}

\begin{proposition}\label{prop:surfaces1}
The variety $X$ has exactly $11$ maximal contractions to surfaces, and their images are isomorphic to $\P^2$.
\end{proposition}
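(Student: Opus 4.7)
The plan is to combine the already completed classification of the $189$ extremal rays of $\Nef(X)$ with an explicit identification of each remaining fiber type contraction as a morphism onto $\P^2$. By the discussion preceding Proposition~\ref{prop:9toP1}, there are $20$ fiber type contractions among the $189$ extremal rays of $\Nef(X)$. Proposition~\ref{prop:9toP1} accounts for the $9$ with target $\P^1$, leaving $11$ whose supporting divisor $L$ satisfies $L^3=0$ but $L^2\neq 0$ in $N_1(X)$; hence each such contraction has a surface $S$ as image. Since $L$ generates an extremal ray of $\Nef(X)$ (equivalently a facet of $\cNE{X}$), the target $S$ has $\rho_S=1$, and being dominated by the smooth rational variety $X$ it is a normal rational projective surface of Picard number one.

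Since $\FO$ acts by automorphisms on $X$ and permutes the $11$ rays in three orbits, it suffices to identify $S\simeq \P^2$ for one representative of each orbit. For the $\FO$-invariant representative $L_1=A_0+B_0+D_{01}+D_{02}+D_{03}$, the $B_0$-relation of Table~\ref{tab:rels} yields $L_1\equiv q^*H-C_{01}$, so that $|L_1|$ is identified through $q$ with the two-dimensional linear system of hyperplanes in $\P^3$ passing through the point $C_{01}=[1:1:1:1]$; the associated contraction is the strict transform on $X$ of the projection from $C_{01}$, so the target is tautologically $\P^2$. For the orbit-of-size-$2$ representative $L_2=A_2+A_3+C_{01}+D_{23}$, combining the $A_2$ and $A_3$ relations with the quadric relation yields the symmetric form $L_2\equiv A_0+A_1+C_{23}+D_{01}$. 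For the orbit-of-size-$8$ representative $L_8=B_0+C_{01}+C_{02}+D_{01}+D_{02}$, the $B_0$ and $C_{02}$ relations give $L_8\equiv 2q^*H-A_0-A_1-B_2-D_{02}-D_{03}-D_{12}-D_{13}$.

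For the two nontrivial orbits I would then verify $h^0(X,L)=3$ directly, translating the computation into a complete linear system of hyperplanes or quadrics on $\P^3$ with base conditions determined by the centers of the blowups composing $q$, and cross-checking via {\tt SageMath} in the basis of $\Pic(X)$ from Section~\ref{ssec:intnumbers}. Since $L$ is a minimal supporting divisor of an extremal ray of $\Nef(X)$ on the Mori dream space $X$, it is semiample, so $|mL|$ is base-point-free for some $m\geq 1$ and the induced map $X\to \P^{h^0(mL)-1}$ Stein-factors through $\phi:X\to S$ followed by a finite map embedding $S$ into $\P^{h^0(mL)-1}$. Taking $m=1$, the combination $h^0(L)=3$, $\dim S=2$ and $\rho_S=1$ forces $S\simeq \P^2$, since the only two-dimensional subvariety of $\P^2$ is $\P^2$ itself.

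The main obstacle will be the explicit computation of $h^0(X,L)$ for the orbits of sizes $2$ and $8$, where the underlying quadric systems on $\P^3$ come with several concurrent base lines and points so that the naive dimension count requires careful corrections from the exceptional divisors of $q$. The intersection calculus of Proposition~\ref{prop:intnumbers} provides useful cross-checks; alternatively, if section-counting proves delicate, one may analyze a general fiber of $\phi$, showing it is an irreducible smooth rational curve $F$ with $L\cdot F=0$, and then exhibit an irreducible effective curve $C\subset X$ with $L\cdot C=1$ and $C\cdot F>0$, which identifies the ample generator of $\Pic(S)_\Q$ as having top self-intersection $1$ and again forces $S\simeq \P^2$.
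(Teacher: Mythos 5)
Your proposal is essentially the paper's own argument: reduce to one representative per $\FO$-orbit, rewrite each supporting divisor via the relations of Table \ref{tab:rels} as $q^*H$ or $2q^*H$ minus boundary classes, identify the resulting complete linear system with the hyperplanes through $C_{01}$ or with quadrics through a chain of lines in $\P^3$, and conclude from the dimension count $h^0=3$. The two points you defer are exactly the ones the paper also leaves implicit (the explicit dimension count for the quadric systems, and that the induced finite map from the Stein factorization onto the $\P^2$-image has degree one), so nothing essential differs.
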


\begin{proof}
It is enough to show that the images of the contractions defined by the three divisors in the above list are $\P^2$. In order to do so, we first note that, by means of the relations described in Table \ref{tab:rels}, the  three divisors above can be written, respectively, as:
 \begin{itemize}[leftmargin=\yy pt]
\item[(1)] $q^*H-C_{01}$;
\item[(2)] $2q^*H-(B_2+B_3+C_{01}+D_{02} + D_{03} + D_{12} + D_{13} + D_{23})$; 
\item[(8)] $2q^*H-(A_0+A_1 + B_2 + D_{02} + D_{03} + D_{12} + D_{13})$. 
\end{itemize}
This allows us to identify the complete linear systems of these divisors with linear systems of hyperplanes and quadrics in $\P^3$, namely a linear system of hyperplanes passing by a point (case (1)), and two linear system of quadrics containing a chain of three lines (cases (2) and (8)). A straightforward computation shows that these linear systems are $3$-dimensional, so they provide a contraction from $X$ to $\P^2$. 
\end{proof}

\subsection{$W$-invariant contractions}\label{ssec:Winvcontr}

As a by-product of our description of $X$, we can obtain geometric descriptions of the Chow quotients of all the rational homogeneous $\PGL(4)$-varieties. Since the Chow quotient of the projective space $\P^3$ by the action of $H$ is a point, we are left with the following homogeneous varieties:

\begin{itemize}[leftmargin=22 pt]
\item[(i)] $F(2)$:  Grassmannian of $2$-dimensional subspaces of $\C^4$;
\item[(ii)] $F(1,2)$: variety of flags of subspaces of $\C^4$ of dimensions $1,2$;
\item[(iii)] $F(2,3)$: variety of flags of subspaces of $\C^4$ of dimensions $2,3$;
\item[(iv)] $F(1,3)$: variety of flags of subspaces of $\C^4$ of dimensions $1,3$.
\end{itemize}

The corresponding normalized Chow quotients by the actions of $H\subset\PGL(4)$ will be denoted by $X(2)$, $X(1,2)$, $X(2,3)$, $X(1,3)$. By definition, they have dimensions $1,2,2,2$, respectively. In particular $X(2)\simeq\P^1$. \par\medskip

The key observation here is the following statement:

\begin{lemma}\label{lem:partChow}
The natural morphisms between rational homogeneous varieties
$$
\xymatrix@=15pt{&&F(1,2)\ar[rd]&\\F(1,3)&F\ar[ru]\ar[rd]\ar[l]&&F(2) \\&&F(2,3)\ar[ru]&
}
$$
induce $W$-invariant contractions:
$$
\xymatrix@=15pt{&&X(1,2)\ar[rd]&\\X(1,3)&X\ar[ru]\ar[rd]\ar[l]&&X(2) \\&&X(2,3)\ar[ru]&
}
$$
\end{lemma}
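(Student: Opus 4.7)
The plan is to construct each of the four contractions by exploiting the description of the normalized Chow quotient as a normalized limit quotient (Proposition~\ref{prop:BHR}), together with the $G$-equivariance of the natural projections $\phi\colon F\to F'$, where $F'$ denotes one of $F(2)$, $F(1,2)$, $F(2,3)$, $F(1,3)$. Since each $\phi$ is $G$-equivariant, it is in particular $H$-equivariant and $N_G(H)$-equivariant, so any morphism of Chow quotients produced by the construction will automatically be $W$-equivariant, which is what ``$W$-invariant'' means in the statement.

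First I would observe that for every $H$-linearized ample line bundle $L'$ on $F'$, the pullback $\phi^*L'$ is an $H$-linearized globally generated line bundle on $F$ and that $F^{ss}_{\phi^*L'}=\phi^{-1}((F')^{ss}_{L'})$: semistability is detected by the non-vanishing on a point of some invariant section of some power of the linearized bundle, and sections pull back along $\phi$. The induced $H$-equivariant map between semistable loci descends to a morphism of GIT quotients $\mathcal{G}F_{\phi^*L'}\to \mathcal{G}F'_{L'}$. As $L'$ varies over the polytope of sections of $F'$, these morphisms are compatible with the transition maps in the two inverse systems, and on the open of geometric quotients they send the orbit $Hx$ to $H\phi(x)$. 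Consequently they induce a morphism between the irreducible components $\overline{\mathcal{L}}F\to \overline{\mathcal{L}}F'$ of the respective inverse limits, and passing to normalizations yields the desired morphism $X=\CF\to \mathcal{C}F'$.

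To conclude, I would verify that this morphism is a contraction. Surjectivity follows from the identity $\phi(\overline{Hx})=\overline{H\phi(x)}$ on the dense open of general orbits, combined with properness. Normality of the target is built into the normalized Chow quotient construction. For connectedness of the fibers, I would apply Stein factorization; alternatively, over a general orbit closure $\overline{Hy}\in\mathcal{C}F'$, the fiber can be identified with the Chow quotient by $H$ of the $H$-saturated preimage $\phi^{-1}(\overline{Hy})$, which is connected because it fibers over the connected variety $\overline{Hy}$ with connected fibers of $\phi$ (the fibers of the projections between flag varieties being themselves homogeneous, hence connected). Finally, the $W$-equivariance follows from the $N_G(H)$-equivariance of $\phi$, since the $W$-actions on $X$ and on $\mathcal{C}F'$ both descend from the $N_G(H)$-actions on $F$ and $F'$, and these are intertwined by $\phi$.

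The main obstacle I expect is the bookkeeping in the second paragraph: one must check that the maps between GIT quotients are genuinely compatible with the transition maps in the inverse systems as $L'$ varies, and that the induced morphism on inverse limits maps $\overline{\mathcal{L}}F$ into $\overline{\mathcal{L}}F'$ rather than landing on a spurious irreducible component of the target inverse limit. Both verifications reduce to an assertion on the dense open of geometric quotients, where it is transparent from $\phi(\overline{Hx})=\overline{H\phi(x)}$, but the combinatorial setup linking the two polytopes of sections requires care.
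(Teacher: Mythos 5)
Your strategy is genuinely different from the paper's: you go through the limit-quotient description of $X$ (Proposition~\ref{prop:BHR}) and try to produce a compatible system of morphisms from $X$ to the GIT quotients of $F'$, whereas the paper constructs $X\to X(1,2)$ via the normalized universal families of cycles (in effect, pushforward of cycles along $\phi$) and then checks connectedness of fibers by chasing the diagram $X\leftarrow Z\to F\to F(1,2)\leftarrow Z(1,2)\to X(1,2)$, using that the evaluation maps are birational and that $\phi$ and $u'$ have connected fibers. Your final paragraph (surjectivity, Stein factorization, connectedness of the general fiber, $W$-equivariance from $N_G(H)$-equivariance of $\phi$) is sound and roughly parallel to the paper's argument. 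The problem lies in your second paragraph, and it is not the bookkeeping issue you flag at the end.

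Concretely: you correctly observe that $\mathcal{G}F_{\phi^*L',v}\cong \mathcal{G}F'_{L',v}$ (the invariant section rings agree because $\phi_*\mathcal{O}_F=\mathcal{O}_{F'}$), but you never construct a morphism from $X$ to $\mathcal{G}F_{\phi^*L',v}$. The inverse system whose limit is $\mathcal{L}F$ (Section~\ref{ssec:prelimtorus}, Remark~\ref{rem:UniversalPropertyLimit}) consists of the quotients $\mathcal{G}F_u$ taken with respect to the \emph{fixed ample} polarization $L$ of $F$ and varying weight $u$; the bundles $\phi^*L'$ are semiample but not ample on $F$, so the quotients $\mathcal{G}F_{\phi^*L',v}$ do not belong to that system, and Theorem~\ref{thm:invlim} does not supply maps $X\to \mathcal{G}F_{\phi^*L',v}$. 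Nor is there an evident inclusion of semistable loci $F^{ss}_{L,v}\subseteq F^{ss}_{\phi^*L',v}$ that would let you factor through one of the $\mathcal{G}F_{L,u}$: semistability with respect to $L$ and with respect to $\phi^*L'$ are computed from different weight polytopes and are genuinely different conditions. Hence the sentence ``Consequently they induce a morphism $\overline{\mathcal{L}}F\to\overline{\mathcal{L}}F'$'' is exactly where the content of the lemma is hidden. To close the gap you would need a semiample analogue of Kapranov's Theorem~(0.4.3), i.e.\ that every cycle $\mathcal{Z}$ parametrized by $X$ meets $F^{ss}_{\phi^*L',v}$ in a single fiber of the quotient map $F^{ss}_{\phi^*L',v}\to\mathcal{G}F_{\phi^*L',v}$ --- equivalently, that $\phi_*\mathcal{Z}$ is again a cycle parametrized by $\overline{\mathcal{C}F'}$. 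That last formulation is precisely the pushforward-of-cycles construction underlying the paper's commutative diagram of universal families, which renders the GIT detour unnecessary.
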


\begin{proof}
We will write the proof for the case of the contraction $F\to F(1,2)$; the remaining cases are analogous.

Denoting by $u:Z\to X$ and $u':Z(1,2)\to X(1,2)$ the corresponding normalizations of the universal families of cycles in $F$ and $F(1,2)$, we get a commutative diagram:
$$
\xymatrix{X\ar[d]&Z\ar[r]^v\ar[l]_u\ar[d]&F\ar[d]\\X(1,2)&Z(1,2)\ar[r]^{v'}\ar[l]_{u'}&F(1,2)}
$$
Since the fibers of $F \to F(1,2)$ are connected and $v$ is birational, it follows that the composition $Z \to F(1,2)$ has connected fibers. As $v'$ is also birational, the induced map $Z \to Z(1,2)$ has connected fibers. Moreover, $u'$ has connected fibers, so the composition $Z \to X(1,2)$ has connected fibers. Consequently, the surjective map $X \to X(1,2)$ also has connected fibers.

Finally, the fact that this map is $W$-invariant follows from the fact that it is induced by the contraction $F\to F(1,2)$, which is $G$-equivariant.
\end{proof}

\begin{remark}\label{rem:partChow}
Note that the above proof gives an analogous statement for the Chow quotient of a rational homogeneous $G$-variety by the action of a maximal torus in $G$, for any semisimple algebraic group $G$.
\end{remark}

Our goal now will be to identify the faces of $\Nef(X)$ providing the  contractions from $X$ to the Chow quotients of the varieties of partial flags.

We start by observing that, with the above notation, the contractions to $X(2)$ and to $X(1,3)$ must be invariant by the action of $\FO$, whereas the the anti-trasposition $\tau$ exchanges  the contractions to $X(1,2)$ and to $X(2,3)$. This immediately implies  the following:

\begin{lemma}\label{lem:ChowGrass}
The contraction $X\to X(2)$ is supported by the divisor:
$$
L_1:=C_{01} + C_{23} + D_{01} + D_{23}
$$
\end{lemma}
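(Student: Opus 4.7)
The plan is to identify the contraction $X \to X(2)$ among the nine contractions of $X$ to $\P^1$ enumerated in Proposition \ref{prop:9toP1}, by showing it is the unique one invariant under the full tile group $\FO$.

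First, I would produce the contraction and identify its target. Lemma \ref{lem:partChow} gives a surjective contraction $\pi_{(2)}\colon X \to X(2)$ onto the normalized Chow quotient of the Grassmannian $F(2)=\Gr(2,\C^4)$. Since $\dim F(2)=4$ and the generic $H$-orbit in $F(2)$ is three-dimensional (the stabilizer of a generic $2$-plane is finite, as one checks directly in affine coordinates on a standard open), the Chow cycles of the generic orbit closures have dimension three, so $\dim X(2)=1$. Being a normal projective curve obtained as the Chow quotient of a rational variety, $X(2)\simeq \P^1$.

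Next, I would verify that $\pi_{(2)}$ is $\FO$-invariant. The $W$-invariance is exactly the content of Lemma \ref{lem:partChow}. For invariance under the anti-transposition $\tau$, recall from Section \ref{ssec:prelimPGL} that $\tau$ acts on the simple roots of $\PGL(4)$ by $\alpha_i\mapsto \alpha_{4-i}$, and in particular fixes $\alpha_2$. Consequently $\tau$ preserves, up to conjugation, the maximal parabolic subgroup stabilizing a $2$-plane, so it induces an automorphism of $F(2)$ commuting with the $H$-action. By the naturality argument already used to prove Lemma \ref{lem:partChow}, this descends to an automorphism of $X(2)$ with respect to which $\pi_{(2)}$ is equivariant. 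Combined with the $W$-equivariance, this makes $\pi_{(2)}$ an $\FO$-invariant contraction of $X$ onto $\P^1$.

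Finally, I would close by invoking Proposition \ref{prop:9toP1}: the nine contractions $X\to\P^1$ split into $\FO$-orbits of sizes $8$ and $1$, and the unique $\FO$-fixed one is supported by $L_1=C_{01}+C_{23}+D_{01}+D_{23}$. Hence $\pi_{(2)}$ must coincide with this distinguished contraction, and is therefore supported by $L_1$. I expect the only real subtlety to be the $\tau$-invariance, which has to be translated from the combinatorial fact that $\tau$ fixes $\alpha_2$ into a bona fide automorphism of $X(2)$; the remainder of the argument is pure bookkeeping against the preparation already developed in the paper.
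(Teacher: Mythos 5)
Your proof is correct and follows essentially the same route as the paper: the paper likewise observes that the contraction to $X(2)$ must be $\FO$-invariant and then concludes via Proposition \ref{prop:9toP1} that, up to scaling, $L_1$ is the only $\FO$-invariant divisor supporting a contraction to $\P^1$. The only difference is that you spell out the $\tau$-invariance (via $\tau$ fixing $\alpha_2$ and hence the parabolic defining $F(2)$), a point the paper asserts without elaboration.
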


\begin{proof}
It follows from the fact, up to scaling, this is the only $\FO$-invariant divisor supporting a contraction to $\P^1$.
\end{proof} 

We may now identify the Chow quotients of the remaining three varieties of partial flags:

\begin{proposition}\label{prop:ChowPFlag}
The Chow quotients $X(1,2)$ and $X(2,3)$ are isomorphic to $\P^2$ blown up in four points, and the Chow quotient of $X(1,3)$ is isomorphic to $\P^2$.
\end{proposition}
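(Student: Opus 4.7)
The plan is to identify each induced contraction $X \to X(*)$ from Lemma \ref{lem:partChow} with a specific contraction of $X$ classified in Proposition \ref{prop:surfaces1}, using the $\FO$-equivariance of the forgetful maps on flag varieties.

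First, I would verify the $\FO$-equivariance of each morphism. The anti-transposition $\tau$ sends a flag $V_\bullet$ to the flag of annihilators in reverse order, so it preserves $F(2)$ and $F(1,3)$ (via $(V_1, V_3) \mapsto (V_3^\perp, V_1^\perp)$) and exchanges $F(1,2)$ with $F(2,3)$. Combined with the $W$-equivariance of the forgetful contractions, this makes $X \to X(1,3)$ and $X \to X(2)$ both $\FO$-equivariant, whereas $X \to X(1,2)$ and $X \to X(2,3)$ constitute an $\FO$-orbit of size two. Each such contraction has a surface target and is of fiber type, hence factors through a maximal extremal fiber-type contraction of $X$ onto a surface of Picard rank one. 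By Proposition \ref{prop:surfaces1} these are exactly the eleven contractions to $\P^2$, split into $\FO$-orbits of sizes $1, 2, 8$. Matching orbit sizes, $X \to X(1,3)$ factors through the unique $\FO$-invariant contraction, supported by $q^*H - C_{01} \equiv A_0 + B_0 + D_{01} + D_{02} + D_{03}$ (geometrically, the linear projection from the $\FO$-fixed point $[1:1:1:1] \in \P^3$ composed with $q$), and the contractions $X \to X(1,2)$, $X \to X(2,3)$ factor through the orbit of size two.

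To conclude $X(1,3) \simeq \P^2$, I would verify that the induced birational morphism $X(1,3) \to \P^2$ is an isomorphism. I would parameterize $F(1,3)$ as the incidence variety $\{(p, H) : p \in H\} \subset \P^3 \times (\P^3)^*$ with $(p, H) = ([a_0:\dots:a_3], [b_0:\dots:b_3])$ subject to $\sum a_i b_i = 0$; on the open subset $\{a_i b_i \neq 0 \text{ for all } i\}$, the $H$-invariants are exactly the ratios $[a_0 b_0 : \dots : a_3 b_3]$, forming an open subset of the hyperplane $\{\sum \lambda_i = 0\} \simeq \P^2$. For $X(1,2) \simeq \mathrm{Bl}_4 \P^2$, I plan to combine the maps $X(1,2) \to X(2) = \P^1$ and $X(1,2) \to \P^2$ (from orbit $(2)$) to embed $X(1,2)$ as a surface in $\P^1 \times \P^2$, and to identify the four $(-1)$-curves of $X(1,2) \to \P^2$ with the $W$-orbit of four $H$-fixed flags in $F(1,2)$ lying over a common fixed point of $F(2)$. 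The case $X(2,3)$ then follows via $\tau$.

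The main obstacle will be the Picard-rank computation, in particular ruling out extra divisorial contributions to $X(1,3)$: the rational parameterization above only gives an isomorphism on an open locus, and one must verify via Zariski's main theorem (invoking smoothness of $\P^2$ and normality of $X(1,3)$) that the birational morphism $X(1,3) \to \P^2$ contracts no divisor. Concretely, one checks that every boundary divisor of $X$ contracted by the composite $X \to X(1,3) \to \P^2$ is already contracted by $X \to X(1,3)$, which can be done by a direct inspection using the descriptions from Section \ref{ssec:boundarydivisors}.
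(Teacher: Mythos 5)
Your strategy---produce each $X(*)$ by explicit $H$-invariants and then locate the induced contraction among the eleven maximal surface contractions of Proposition~\ref{prop:surfaces1} by $\FO$-orbit bookkeeping---is genuinely different from the paper's, which instead identifies $\Nef(X(1,2))$, $\Nef(X(2,3))$ and $\Nef(X(1,3))$ as explicit faces of $\Nef(X)$ (using the combinatorial quotients of $F(1,2)$, $F(2,3)$ to force five contractions to $\P^1$, the joined-pentachoron combinatorics, and restriction to the boundary divisors $A_i,B_j$, which are degree-five del Pezzo surfaces). However, the orbit-size matching is not logically tight. First, a fiber-type contraction of $X$ onto a surface need not factor through any maximal contraction onto a Picard-rank-one surface (a target isomorphic to $\P^1\times\P^1$ admits no such morphism), so the factorization has to be argued. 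Second, even granting it, the set of maximal surface contractions through which $X\to X(1,2)$ and $X\to X(2,3)$ factor is only constrained to be a nonempty $\FO$-invariant set: nothing in the counting excludes that it is the orbit of size $8$, or a union of orbits (the paper in fact shows each of $X(1,2)$, $X(2,3)$ has \emph{five} contractions to $\P^2$, so the relevant set is the union of the orbits of sizes $2$ and $8$); likewise nothing a priori prevents $X(1,3)$ from having $2$ or $8$ contractions to $\P^2$. So the identification of the relevant faces cannot be read off from orbit sizes alone.

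The deeper gap is the step you defer to ``direct inspection.'' The birational morphism $X(1,3)\to\P^2$ does exist (your $[a_0b_0:\cdots:a_3b_3]$ is a GIT quotient of $F(1,3)$, dominated by the Chow quotient), and it is an isomorphism iff it contracts no curve. Tracing this through $X$: the only prime divisors of $X$ whose image under $q$ followed by projection from $[1:1:1:1]$ is a point are the six $D_{ij}$, so what must be shown is precisely that each $D_{ij}$ has zero-dimensional image in $X(1,3)$. That is a statement about the pushforward to $\Chow(F(1,3))$ of the two-parameter families of broken $H$-cycles parametrized by $D_{ij}$ (Remark~\ref{rem:alldivisors}, Section~\ref{ssec:boundarydiv}), not about the surface geometry of $D_{ij}\subset X$ recorded in Section~\ref{ssec:boundarydivisors}; it is essentially equivalent to the Picard-rank claim you are trying to establish and is nowhere carried out. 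The plan for $X(1,2)$ is still less complete: the embedding into $\P^1\times\P^2$ is asserted without proof that the product map is a closed immersion of the right bidegree, and the proposed markers for the four exceptional curves do not exist as described---over a fixed point of $F(2)$ there are exactly two, not four, $H$-fixed flags of $F(1,2)$ (and three over a fixed point of $\P^3$). As it stands the proposal is a plausible program, but its two decisive steps (no contracted curve for $X(1,3)$, and the del Pezzo identification for $X(1,2)$) are missing.
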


\begin{proof}
We note first that the contraction $X\to X(2)\simeq \P^1$ (supported by the divisor $L_1$ of Lemma \ref{lem:ChowGrass}) factors via $X(1,2)$, $X(2,3)$, but not via $X(1,3)$. On the other hand, we can check that the cone generated by the rays of the eight $\FO$-equivalent contractions to $\PP^1$ contains a big divisor. This implies that $X(1,2)$, $X(2,3)$ have either $1$ or $5$ contractions to $\P^1$, and that $X(1,3)$ has either zero or $4$ contractions to $\P^1$.

Let us deal first with the cases $X(1,2)$, $X(2,3)$. Arguing as in Section \ref{sec:nilpotent}, one may easily check that the combinatorial quotients of these varieties are blowups of $\P^2$ at two points, and so both $X(1,2)$, $X(2,3)$ have at least two contractions  to $\P^1$, hence, by the above argument, they have five contractions to $\PP^1$. In particular, the nef cones of these varieties, as subcones of $\Nef(X)$, contain one of the following two cones:
\[\begin{array}{l}
M_1=\langle L_1,A_0 + C_{23} + D_{01},A_1 + C_{23} + D_{01},A_2 + C_{01} + D_{23},A_3 + C_{01} + D_{23}
  \rangle\\
M'_1=\langle L_1,B_0 + C_{01} + D_{01},B_1 + C_{01} + D_{01},B_2 + C_{23} + D_{23},B_3 + C_{23} + D_{23}  \rangle  
\end{array} 
\] 
Obviously, we have that $\tau(M_1)=M'_1$. The dimension of these two cones is equal to $5$, and one may compute the intersection of their linear spans with $\Nef(X)$:
$$N_1:=\R(M_1)\cap \Nef(X),\quad N'_1:=\R(M'_1)\cap \Nef(X).$$
It turns out that, in each case, the intersection is the cone over a {\em joined pentachoron} (defined as the convex hull of the integer generators of its extremal rays), having $5$ more extremal rays, corresponding to contractions to $\P^2$, respectively:
\[\begin{array}{l} 
 A_2 + A_3 + C_{01} + D_{23},
 A_0 + L_1,
 A_1 + L_1,
 A_2 + L_1,
 A_3 + L_1, \quad\mbox{and}\\
B_0 + B_1 + C_{01} + D_{01}, B_0 + L_1, 
 B_1 + L_1,
 B_2 + L_1,
 B_3+L_1,
\end{array} 
\]
Furthermore, one can check that  the barycenters of these two joined pentachorons have top self-intersection $0$, and that any $W$-invariant face of $\Nef(X)$ containing properly $N_1$ or $N'_1$ contains a divisor of positive top self-intersection. We conclude that these are the nef cones of $X(1,2)$ and $X(2,3)$. 

We note now that the cone over the joined pentachoron, that has $10$ extremal rays (corresponding to $5$ contractions to $\P^1$ and $5$ contractions to $\P^2$), and $10$ facets (which are cones over  triangular tegums), is combinatorically equivalent to the nef cone of the del Pezzo surface of degree $5$. In order to check that $N_1$ and $N'_1$, as subcones of $\Nef(X)$, correspond indeed to contractions of $X$ isomorphic to a del Pezzo surface of degree $5$, we denote by $L_2,L_2'$ the generators of the rays corresponding to the barycenters of the two joined pentachorons:
$$
L_2=A_2 + A_3 + 2C_{01} + C_{23} + D_{01} + 2D_{23},\quad L'_2=B_0+B_1+2C_{01} + C_{23} + 2D_{01} + D_{23}.
$$
By a straightforward intersection computation, one can check that the restrictions of $L_2$ to the divisors $B_i$ are ample, and the restrictions of $L_2'$  to the divisors $A_i$ are ample; following Section \ref{ssec:boundarydivisors} these divisors are del Pezzo surfaces of degree $5$. This concludes the proof for $X(1,2)$ and $X(2,3)$.

In order to identify $X(1,3)$ we claim first that all its maximal contractions have $\P^2$ as target. In fact, as noted above, if $X(1,3)$ had contractions to $\P^1$, there would be $4$ of them, and the cone generated by the associated rays would be contained in the nef cone of either $X(1,2)$ or $X(2,3)$. But every facet of the nef cone of the del Pezzo surface of degree $5$ contains only $3$ rays corresponding to contractions to $\P^1$, hence the nef cone of $X(1,3)$ would contain a divisor in the interior of $\Nef(X(1,2))$ or  $\Nef(X(2,3))$, supporting a contraction to either $X(1,2)$ or $X(2,3)$, a contradiction. 

Now, if the nef cone of $X(1,3)$ contained more than one contraction to $\P^2$, arguing as above one could check that the barycenter of the two generators of the corresponding rays would either have positive top self-intersection (contradicting that $X(1,3)$ is a surface), or be ample on $X(1,2)$, $X(2,3)$, or one of their contractions of degree $6$, contradicting that $X(1,3)$ has no contractions to $\P^1$. 

We then conclude that the nef cone of $X(1,3)$ is the extremal ray generated by the $\FO$-invariant divisor $A_0 + B_0 + D_{01} + D_{02} + D_{03}$ corresponding to a contraction of $X$ onto $\P^2$.
\end{proof}

\subsection{The effective cone of $X$}\label{ssec:effcone}

In order to describe the effective cone of $X$ we need to introduce some new effective divisors. 
The planes in $\P^3$ given by: 
\begin{align*}
H_{\{01\}\{23\}}: \quad & x_0-x_1-x_2+x_3=0\\
H_{\{02\}\{13\}}: \quad & x_1-x_2=0\\
H_{\{03\}\{12\}}: \quad & x_0-x_3=0
\end{align*}
are fixed by the anti-transposition $\tau$, and their set is a $W$-orbit; the notation is chosen so that $$\{ij\}\{kl\}=\{ji\}\{kl\}=\{ij\}\{lk\}=\{kl\}\{ij\},$$ and one can check that $\sigma(H_{\{ij\}\{kl\}})=H_{\{\sigma(i)\sigma(j)\}\{\sigma(k)\sigma(l)\}}$ for every $\sigma\in W$. 
Note that $H_{\{01\}\{23\}}$ intersects the quadric $x_0x_3-x_1x_2=0$ in the lines $D_{01}$, $D_{23}$.
Its strict transform in $X$, denoted by $H_{\{01\}\{23\}}\subset X$ is linearly equivalent to
\[
 q^*H - C_{01} -D_{01}-D_{23},
\]
that is, to
\[
A_2 + B_2 - D_{01} + D_{02} + D_{12}.
\]
Using the group action we can find similar expressions in $\Pic(X)$ for the other two strict transforms:
\begin{align*}
H_{\{02\}\{13\}}&=A_1 + B_1 + D_{01} - D_{02} + D_{12},\\
H_{\{03\}\{12\}}&=A_0 + B_0 + D_{01} + D_{02} - D_{12}.
\end{align*}

We will need to consider also the Cayley's cubic surface in $\PP^3$, of equation:
\[
x_0x_1x_2 - x_0x_1x_3 - x_0x_2x_3 + x_1x_2x_3 =0.
\]
This surface, which is $\FO$-invariant, intersects the quadric $x_0x_3-x_1x_2=0$ in the six lines $A_0$, $A_1$, $B_2$, $B_3$, $D_{01}$, $D_{23}$ and has four nodes at the points $D_{02},D_{03},D_{12},D_{13}$. 
The linear equivalence class of its strict transform in $X$ is
\[
 S = 3q^*H -(A_0 + A_1 + C_{01} + B_2 + B_3 + D_{01} + D_{23} + 2(D_{02} + D_{03} + D_{13} + D_{12})),
\]
which can be rewritten as
\[
S=A_1 + B_2 + C_{02} + C_{23} - D_{03}.
\]
Let us denote by $\BD^+(X)$ the set containing the boundary divisors in $X$, the divisors $H_{\{01\}\{23\}}$, 
$H_{\{02\}\{13\}}$, $H_{\{03\}\{12\}}$ and the divisor $S$, and by $\cE \subseteq \Eff(X)$ the cone generated by the numerical classes of the divisors in $\BD^+(X)$.

\begin{remark}\label{rem:effgen} 
Knowing the intersection numbers of a basis of $\Nu(X)$ with every divisor in $\BD^+(X)$, we can choose a basis of $\NU(X)$ and compute the whole structure of the cone $\cE$ with {\tt SageMath}, verifying that the class of every divisor in $\BD^+(X)$ generates an extremal ray of $\cE$.
\end{remark}

\begin{theorem}
With the above notation,
\[
\Eff(X) = \cE.
\]
\end{theorem}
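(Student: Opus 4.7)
The inclusion $\cE \subseteq \Eff(X)$ is immediate, since each of the $24$ generators of $\cE$ listed in $\BD^+(X)$ is, by construction, the class of an effective divisor on $X$. The substantive content is the reverse inclusion, for which the plan is to work with the dual cone. Since $X$ is a Mori Dream Space (Proposition~\ref{prop:MDS}), both $\Eff(X)$ and $\cE$ are closed polyhedral cones in $\NU(X)$, and it is equivalent to prove the dual inclusion
\[
\cE^\vee \subseteq \overline{\ME}(X) \subset \Nu(X),
\]
where $\overline{\ME}(X)$ is the closure of the cone of moving $1$-cycle classes, i.e.\ the dual of $\Eff(X)$. Recall that any class represented by a covering family of irreducible curves lies in $\overline{\ME}(X)$.

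As noted in Remark~\ref{rem:effgen}, with a fixed basis of $\NU(X)$ one can compute in {\tt SageMath} the full combinatorial structure of $\cE$, and in particular list explicit integral representatives for the extremal rays of its dual $\cE^\vee$. Because the construction of $X$, the set $\BD^+(X)$, the three divisors $H_{\{ij\}\{kl\}}$, and the Cayley cubic surface $S$ are all $\FO$-invariant, the action of $\FO$ descends to $\cE$ and to $\cE^\vee$, so the extremal rays of $\cE^\vee$ split into finitely many $\FO$-orbits; it is enough to realize one representative of each orbit as the numerical class of a covering family on $X$.

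The natural pool of covering families is supplied by the geometry already developed in the previous sections. Namely: the two rulings of every boundary divisor isomorphic to $\PP^1 \times \PP^1$ or a smooth quadric, including $C_{23}$ and the divisors $D_{ij}$ (Section~\ref{ssec:boundarydivisors}); the fibers of the contractions from $X$ onto $\PP^1$, onto $\PP^2$, and onto the Chow quotients $X(2)$, $X(1,2)$, $X(2,3)$, $X(1,3)$ of partial flag varieties (Sections~\ref{ssec:nefcone} and \ref{ssec:Winvcontr}); the proper transforms of the pencils of lines of each plane $H_{\{ij\}\{kl\}}$; the ruling by twisted cubics of the Cayley cubic $S$ through its four nodes; and, more generally, proper transforms on $X$ of general curves of low degree in $\PP^3$ passing with prescribed multiplicities through the centers of the blowups defining $q:X \to \PP^3$. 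Each such family is manifestly covering.

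The main obstacle is the matching step: for each $\FO$-orbit of extremal rays of $\cE^\vee$, one must exhibit one covering family as above whose numerical class generates that ray. Once the numerical classes of all candidate covering families have been expressed in the chosen basis, this reduces to a finite linear-algebraic check, which we carry out in the script {\tt Chow\_aux\_2.ipynb} of \cite{AzulSage}. The check simultaneously confirms that each curve we use really has the correct intersection numbers with every divisor of $\BD^+(X)$, and that the classes so produced do exhaust all extremal rays of $\cE^\vee$ modulo $\FO$. This yields $\cE^\vee \subseteq \overline{\ME}(X)$, and dualizing gives $\Eff(X) \subseteq \cE$, completing the proof.
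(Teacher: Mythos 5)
Your duality setup is sound in principle---$\Eff(X)\subseteq\cE$ would indeed follow from $\cE^\vee\subseteq\Eff(X)^\vee$, and the class of a family of irreducible curves covering $X$ does lie in $\Eff(X)^\vee$---but most of the ``covering families'' in your pool do \emph{not} cover $X$, and this breaks the argument. The rulings of the divisors $D_{ij}$ and of $C_{23}$, the pencils of lines in the planes $H_{\{ij\}\{kl\}}$, and the twisted cubics on the Cayley cubic each sweep out a single surface inside the threefold $X$; they move in codimension one, not in a family dominating $X$, so their classes need not pair nonnegatively with every effective divisor. Concretely, $[A_0\tast D_{01}]$ (a ruling of $D_{01}\simeq\P^1\times\P^1$) generates a $K_X$-negative extremal ray of $\cNE{X}$ whose contraction has exceptional locus $D_{01}$ (Proposition \ref{prop:moricone}), so $D_{01}\cdot (A_0\tast D_{01})<0$ and this class lies outside $\Eff(X)^\vee$. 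Admitting such classes into your cone $\cC$ of ``covering'' curves destroys the inclusion $\cC\subseteq\Eff(X)^\vee$; discarding them leaves essentially only the fibers of the fiber-type contractions and general low-degree curves in $\P^3$, with no evidence that these suffice to contain $\cE^\vee$.

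What your codimension-one-moving curves actually dualize to is $\Mov(X)$, not $\Eff(X)$: by the weak duality theorem for Mori Dream Spaces, $\Mov(X)^\vee$ equals the cone $\cC_1^{bir}(X)$ generated by curves moving in codimension one in some SQM of $X$. This distinction is precisely how the paper's proof is organized, and it is the piece your proposal is missing. The paper first writes $\Eff(X)=\Mov(X)+\R_+[E_1]+\dots+\R_+[E_s]$, where the $E_i$ are the exceptional divisors of the elementary divisorial contractions of the SQMs of $X$; it then proves, as a separate Step 1, that every such $E_i$ lies in $\BD^+(X)$ --- this is where the Cayley cubic $S$ and the planes $H_{\{ij\}\{kl\}}$ are identified, using the length bound $\ell(R)\geq 2$ from Lemma \ref{lem:extremal1}, the classification of length-two divisorial contractions, and a careful tracking of which flopped curves the exceptional $\P^2$ must meet; only afterwards does it apply the duality $\Mov(X)^\vee=\cC_1^{bir}(X)$, where curves covering a divisor (possibly on an SQM) are legitimately enough. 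Without Step 1 or a substitute for it, the contributions of the rays $\R_+[S]$ and $\R_+[H_{\{ij\}\{kl\}}]$ to $\Eff(X)$ are not accounted for, and the deferred {\tt SageMath} check cannot be set up correctly because the cone you would be testing against is the dual of the wrong cone.
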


\begin{proof}
By \cite[Remark 2.19]{Cas13} 
\begin{equation}\label{eq:eff}
\Eff(X) = \Mov(X) + \R_+[E_1] + \dots + \R_+[E_s],
\end{equation}
where the $E_i$ are the exceptional divisors of the elementary divisorial rational contractions of $X$, i.e., of the elementary divisorial contractions of the SQMs of $X$.

\medskip
\noindent{\bf Step 1:} Every divisor $E_i$ is an element of $\BD^+(X)$.
\par\medskip
Let $\psi: X \dashrightarrow \widehat{X}$ be a small $\Q$-factorial modification of $X$, and let $\varphi_R$ be a divisorial contraction of $\widehat{X}$, associated with a ray $R=\R_+[\Gamma]$.
If a boundary divisor has negative intersection with $\Gamma$, then this divisor is the exceptional locus of $\varphi_R$, so we may assume that all the boundary divisors are non-negative on $\Gamma$.

In view of Lemma \ref{lem:extremal1} we then have $\ell(R) \ge 2$, hence  $\varphi_R$ is the blowup of a smooth point and $\Exc(R) \simeq \PP^2$ (cf. \cite[Theorem~5.1]{AO2}). The small modification $\psi: X \dashrightarrow \widehat{X}$ factors via a finite number of Atiyah flops (see Section \ref{ssec:moricone}). The divisor 
$\Exc(R)$ cannot be disjoint from all the flopped curves, otherwise $\psi$ would be an isomorphism in a neighborhood of $\Exc(R)$ and $X$ would have a Mori contraction which is the blowdown of $\Exc(R)$ to a point.

We can check that the numerical classes of the curves $A_i \tast B_j$ are a set of generators for the linear span of the face of $\cNE{X}$ on which $-K_X$ is trivial,
hence $\Exc(R)$ must meet at least a flopped curve in the class $[-A_i\tast B_j]$.

We assume first that $i=j$, and, without loss of generality, we consider the case $i=j=0$. The irreducible curve in the class $[-A_0 \tast B_0]$ is contained in $\widehat{D}_{01}, \widehat{D}_{02},\widehat{D}_{03}$ (the strict transforms in $\widehat{X}$ of the divisors ${D}_{01}$, ${D}_{02}$, ${D}_{03}$), hence these divisors meet $\Exc(R)$ and therefore their  intersection number with $\Gamma$ is positive. 

From formula (\ref{eq:multican2}), we get that $\widehat{D}_{ij} \cdot \Gamma=1$ and that all the other boundary divisors have intersection number zero with $\Gamma$. We can then check that $\widehat{S} \cdot \Gamma =-1$, hence $\widehat{S} = \Exc(R)$.
We can also compute the numerical class of $\Gamma$, which is
\begin{equation}\label{eq:gcs}
 [A_0\tast B_0 +A_1\tast B_1 +C_{01} \tast C_{23} +A_0\tast D_{01} +B_0\tast D_{01}].
\end{equation}
Note that the divisor $S$ is $\FO$-invariant, hence different choices of $i=j$ will produce the same output. 

We assume now that $i \not=j$, and we consider, without loss of generality, the case $i=0$, $j=1$. The irreducible curve in the class $[-A_0 \tast B_1]$ is contained in $\widehat{D}_{01}, \widehat{C}_{12}, \widehat{C}_{13}$. From formulae (\ref{eq:multican1}) and (\ref{eq:multican2}) we get that
$\widehat{D}_{01}, \widehat{D}_{23}, \widehat{C}_{12}, \widehat{C}_{13},\widehat{C}_{02},\widehat{C}_{03}$ have degree one on $\Gamma$, and all the other boundary divisors have intersection number zero with $\Gamma$. We can then check that $\widehat{H}_{\{01\}\{23\}} \cdot \Gamma =-1$, hence $\widehat{H}_{\{01\}\{23\}} = \Exc(R)$.
We can also compute the numerical class of $\Gamma$, which is
\begin{equation}\label{eq:gcs}
[A_0\tast B_1 +A_3\tast B_2 + A_0\tast C_{12} + B_1\tast C_{12} - C_{03}\tast C_{12}].
\end{equation}
With different choices of the pair $i,j$ we obtain that $\Exc(R)$ is another divisor of type $\widehat H_{\{ij\}\{kl\}}$.
\par
\medskip
\noindent{\bf Step 2:} $\Mov(X) \subseteq \cE$.
\par\medskip
 We will show the dual statement $\cE^\vee \subseteq \Mov(X)^\vee$.
By the weak duality Theorem for Mori Dream Spaces,  \cite[Theorem 4.3]{BDPS}, the cone $\Mov(X)^\vee$ equals the cone $\cC_1^{bir}(X)$, generated by all the curves moving in codimension one in some SQM of $X$.
It will then be enough to construct a cone $\cC$ such that $\cE^\vee \subseteq \cC  \subseteq  \cC_1^{bir}(X)$.
We will consider the cone $\cC$ generated by the classes of curves in the orbits of
\begin{enumerate}
\item $A_0\tast C_{23}$,
\item $A_0\tast D_{01}$,
\item $A_0\tast B_1+A_0\tast D_{01}$, $A_0\tast B_1+A_0\tast C_{12}$, $A_0\tast B_0+A_0\tast D_{01}$,
\item $A_0\tast B_0 +A_1\tast B_1 +C_{01} \tast C_{23} +A_0\tast D_{01} +B_0\tast D_{01},$
\item $A_0\tast B_1 +A_3\tast B_2 + A_0\tast C_{12} + B_1\tast C_{12} - C_{03}\tast C_{12}.$
\end{enumerate}
The deformations of the curves in $(1)$--$(3)$ cover the boundary divisors $C_{23}, D_{01}$ and $A_0$, respectively, while in the last two cases, as seen in Step 1, the curves cover the exceptional divisor of a contraction of a small $\Q$-factorial modification of $X$; we then have an inclusion $\cC \subseteq \cC_1^{bir}(X)$.
To finish the proof, we have checked with {\tt SageMath} that also the inclusion $\cE^\vee \subseteq \cC$ holds.
\end{proof}


\bibliographystyle{plain}
\bibliography{bibliomin}


\end{document}